\NeedsTeXFormat{LaTeX2e}

\documentclass[a4paper]{amsart}
\usepackage{amssymb} 
\usepackage[T1]{fontenc}
\usepackage[latin1]{inputenc}
\usepackage{amsfonts}
\usepackage{amsxtra}
\usepackage{ae}
\usepackage[all]{xy}
\usepackage{enumerate}
\usepackage{url}
\usepackage{verbatim}
\usepackage{pgf,tikz}
\usepackage{tikz-cd}
\usepackage{stmaryrd}

\usepackage{ragged2e}
\usepackage{amsmath}
\usepackage{amssymb}
\usepackage{mathtools}
\usepackage{tikz-cd}
\usepackage{bbm}
\usepackage{mathrsfs}
\usepackage{amsthm}
\usepackage{verbatim}
\usepackage{upgreek}
\usepackage{relsize}
\usepackage{enumitem}
\usepackage{dsfont}
\usepackage{graphicx}
\usepackage{amssymb}
\usepackage[all]{xy}
\usepackage{url}

\newcommand{\tp}{\!\!
{\scriptstyle
\text{
\raisebox{0.7pt}{
\textcircled{\raisebox{-1.3pt}{$\mathsf{T}$}}
} 
} 
} 
\!\!}

\usetikzlibrary{arrows,patterns}

\include{diagram}

\newcommand*{\ket}{\rangle}
\newcommand*{\bra}{\langle}

\newcommand*{\HH}{\mathcal{H}}

\newcommand*{\M}{\mathcal{M}}

\newcommand*{\T}{\mathcal{T}}

\newcommand*{\SSym}{{\bf Sym}}

\newcommand*{\Rep}{{\bf Rep}}
\newcommand*{\Corep}{{\bf Corep}}

\newcommand*{\red}{\mathsf{r}}
\newcommand*{\full}{\mathsf{f}}
\newcommand*{\Irr}{\mathsf{Irr}}

\renewcommand*{\Irr}{\mathsf{Irr}}
\newcommand*{\Poly}{\mathcal{O}}

\DeclareMathOperator{\Sym}{Sym}
\DeclareMathOperator{\Qut}{Qut}

\DeclareMathOperator{\rel}{rel}

\DeclareMathOperator{\Aut}{Aut}

\DeclareMathOperator{\ord}{ord}

\DeclareMathOperator{\id}{id}

\DeclareMathOperator{\supp}{supp}

\DeclareMathOperator{\Wr}{Wr}
\DeclareMathOperator{\wwr}{wr}

\newenvironment{bnum}
{\begin{list}{}
    {\setlength{\labelwidth}{15pt}
     \setlength{\leftmargin}{\labelwidth}
    }
}
{\end{list}}

\numberwithin{equation}{section}
\theoremstyle{change}
\newtheorem{theorem}{Theorem}[section]
\newtheorem{prop}[theorem]{Proposition}
\newtheorem{lemma}[theorem]{Lemma}

\newtheorem{definition}[theorem]{Definition}
\newtheorem{question}[theorem]{Question}


\begin{document}

\title{Infinite quantum permutations}

\author{Christian Voigt}
\address{School of Mathematics and Statistics \\
         University of Glasgow \\
         University Place \\
         Glasgow G12 8QQ \\
         United Kingdom 
}

\subjclass[2020]{46L67, 
05C36, 
20B27, 
81P45. 
}


\thanks{This work was supported by EPSRC grant EP/T03064X/1.}

\maketitle

\begin{abstract}
We define and study quantum permutations of infinite sets. This leads to discrete quantum groups which can be viewed as infinite variants of the quantum permutation groups 
introduced by Wang. More precisely, the resulting quantum groups encode universal quantum symmetries of the underlying sets among all discrete quantum groups. 

We also discuss quantum automorphisms of infinite graphs, including some examples and open problems regarding both the existence and non-existence of quantum symmetries in 
this setting.  
\end{abstract}

\section{Introduction} 

The quantum permutation group $ S_n^+ $, introduced by Wang \cite{Wangqsymmetry}, is the universal compact quantum group acting on $ n $ points. This quantum group 
has been studied extensively from various perspectives, with motivation coming from operator algebras, subfactors, and free probability, see for 
instance \cite{BBCsurvey}, \cite{BANICA_quantumpermutationgroups}, \cite{Brannanquantumautomorphism}, \cite{KOESTLER_SPEICHER_definetti}. 
Building on the construction of quantum permutation groups, Banica and Bichon introduced quantum automorphism groups of finite 
graphs \cite{BICHON_qutgraphs}, \cite{Banicaqutgraph}, \cite{Banicasmallmetric}. This, in turn, has led to the discovery of interesting links between quantum groups, 
graph theory, and the theory of non-local games \cite{LUPINI_MANCINSKA_ROBERSON_nonlocalgames}, \cite{MANCINSKA_ROBERSON_quantumisoplanar}. 

In view of these developments, it is natural to ask for infinite versions of quantum permutation groups, that is, quantum generalisations of the symmetric group of an infinite set. 
Goswami and Skalski \cite{GOSWAMI_SKALSKI_quantumpermutations} addressed this question by introducing two quantum semigroups of infinite quantum permutations. The first one is a quantum 
analogue of the group of permutations moving only finitely many points, and can be viewed as a certain inductive limit of the quantum permutation groups $ S_n^+ $ 
for $ n \in \mathbb{N} $. Since $ S_n^+ $ is not discrete one has to be careful to give meaning to such a limit, and Goswami and Skalski do this by working on the level 
of von Neumann algebras. Their second construction yields the universal von Neumann algebra generated by the entries of an infinite magic unitary matrix, which can be 
viewed as a quantum analogue of the group of all permutations of an infinite set. It is unclear, however, if either of these objects fit into the theory of locally compact quantum 
groups in the sense of Kustermans and Vaes \cite{KVLCQG}. 

In this paper we propose a slightly different approach to infinite quantum permutations which allows one to obtain genuine quantum groups. As 
in \cite{GOSWAMI_SKALSKI_quantumpermutations}, the key ingredient is the $ * $-algebra generated by the entries of an infinite magic unitary matrix, but in contrast we single 
out different classes of its representations. This is inspired by the theory of non-local games and their associated game algebras \cite{HMPSsynchronousgames}. 
Winning strategies for a synchronous game can be encoded by different types of representations of the game algebra, thus emphasizing the role of representation theory for 
this algebra. In particular, finite dimensional $ * $-representations correspond to winning quantum strategies, and in the case of the graph isomorphism game these are closely 
related with certain finite dimensional $ * $-representations of the function algebras of the quantum permutation groups $ S_n^+ $. Let us point out that studying the structure 
of such representations amounts essentially to understanding matrix models for $ S_n^+ $, see \cite{BANICA_quantumpermutationgroups}. 

Building on these observations we define a quantum version $ \Sym^+(X) $ of the full symmetric group $ \Sym(X) $ of an arbitrary set $ X $, and also a quantum 
version $ \Sigma^+(X) $ of the subgroup $ \Sigma(X) \subset \Sym(X) $ consisting of permutations which move only finitely many points. Both $ \Sym^+(X) $ and $ \Sigma^+(X) $ 
are discrete quantum groups, and if $ X = \{1,\dots, n\} $ is finite they can be viewed as the discretisation of the compact quantum group $ S_n^+ $. 
Here by discretisation we mean the notion dual to quantum Bohr compactification defined and studied by So{\l}tan \cite{SOLTAN_compactifications}, \cite{SOLTAN_bohr}. 

In a similar way we define quantum automorphisms associated to arbitrary simple graphs. For a finite graph $ X $, the resulting quantum group $ \Qut_\delta(X) $ 
can again be viewed as the discretisation of the corresponding compact quantum automorphism groups $ \Qut(X) $, and this allows one to transfer a number of results and 
techniques to the infinite case. We shall illustrate 
this by looking at some examples, largely building on the work of Schmidt \cite{SCHMIDT_foldedcube}, \cite{SCHMIDT_distancetransitive}. At the same time, we list a few open problems 
whose resolution we expect to be helpful for gaining a better understanding of the genuinely infinite aspects of the theory. 

After the first version of this paper appeared, Rollier and Vaes published a very interesting construction of locally compact quantum automorphism 
groups for connected locally finite graphs \cite{ROLLIER_VAES_qut}. To such a graph one can naturally associate a multiplier Hopf $ * $-algebra 
in the sense of van Daele \cite{vDmult}, since in the connected locally finite case the relations for an infinite magic unitary compatible with the adjacency relations 
can be interpreted purely algebraically. The key result of \cite{ROLLIER_VAES_qut} is that this multiplier Hopf $ * $-algebra admits Haar weights. 
In order to construct these weights, Rollier-Vaes study a certain unitary tensor category associated to the graph, extending work by 
Man\v{c}inska-Roberson \cite{MANCINSKA_ROBERSON_quantumisoplanar}. As already noted in \cite{ROLLIER_VAES_qut}, the discretisation of 
the locally compact quantum group $ \Qut(X) $ defined by Rollier-Vaes identifies with the quantum group $ \Qut_\delta(X) $ which we consider here. 

Let us explain how the paper is organised. In section \ref{secprelim} we collect some preliminaries regarding quantum groups and fix our notation. 
Section \ref{secinfiniteqquantumpermutations} contains the definition of infinite quantum permutations and a description of the associated $ C^* $-tensor categories. 
In section \ref{secinfiniteqpg} we focus our attention on finite dimensional quantum permutations and the corresponding discrete quantum groups. 
We show that these quantum groups can be interpreted as universal quantum symmetry groups, in analogy to the considerations in \cite{Wangqsymmetry}. In 
section \ref{seclocallyfinite} we specialise to quantum permutations moving only finitely many points. We show that the resulting quantum groups are 
non-amenable as soon as the underlying set contains at least four elements. 
Section \ref{secfreewreath} contains an infinite version of the free wreath product construction first studied by Bichon \cite{Bichonfreewreathproducts}. 
More precisely we define unrestricted and restricted free wreath products of discrete quantum groups with respect to our infinite quantum permutation groups. 
Finally, in section \ref{secinfinitegraphs} we extend the discussion to the case of 
graphs and consider some examples. In particular, we show that the infinite Johnson graph $ J(\infty, 2) $ has no quantum symmetry. 
In constrast, graphs with disjoint automorphisms, the unit distance graph of $ \mathbb{R} $, and all infinite Hamming graphs have quantum symmetry. It is also shown that 
the Rado graph does not admit any non-classical quantum automorphisms of finite dimension. 

We conclude with some remarks on our notation. If $ \HH $ is a Hilbert space we write $ B(\HH) $ for the algebra of bounded operators on $ \HH $, and denote by $ [X] $ 
the closed linear span of a subset $ X $ of a Banach space. 
If $ A $ is a $ C^* $-algebra we write $ \Rep(A) $ for the $ C^* $-category of all nondegenerate finite dimensional $ * $-representations of $ A $. 
The multiplier algebra of $ A $ is denote by $ M(A) $. 

I would like to thank Matthew Daws and Stefaan Vaes for helpful discussions about infinite quantum permutations.

\section{Preliminaries} \label{secprelim}

In this section we collect some definitions and results from the theory of quantum groups, mainly in order to fix terminology and notation.  We refer 
to \cite{KVLCQG}, \cite{Wangqsymmetry}, \cite{Banicageneric}, \cite{Banicafusscatalan}, \cite{NTlecturenotes} for more background. 

Recall from \cite{KVLCQG} that a locally compact quantum group $ G $ is given by a Hopf $ C^* $-algebra $ C_0^\red(G) $ together with faithful left and right 
Haar weights. We write $ L^2(G) $ for the GNS-construction of the left Haar weight, so that we have $ C_0^\red(G) \subset B(L^2(G)) $ in a natural way. 
By definition, the right leg of the fundamental multiplicative unitary $ W \in B(L^2(G) \otimes L^2(G)) $ is the reduced group $ C^* $-algebra $ C^*_\red(G) $ 
of $ G $, and we have $ W \in M(C_0^\red(G) \otimes C^*_\red(G)) $. Both $ C_0^\red(G) $ and $ C^*_\red(G) $ admit full versions $ C_0^\full(G) $ and $ C^*_\full(G) $, 
in analogy to the full and reduced group $ C^* $-algebras of classical locally compact groups. 
The Pontrjagin dual $ \hat{G} $ of $ G $ is the locally compact quantum group determined by $ C_0^\red(\hat{G}) = C^*_\red(G) $ as Hopf $ C^* $-algebras. 

A locally compact quantum group $ G $ is called compact if $ C^\full_0(G) $ is unital, in which case we write $ C^\full(G) $ and $ C^\red(G) $ 
for the associated full and reduced algebras of functions. A compact quantum group $ G $ can be equivalently described by the Hopf $ * $-algebra $ \Poly(G) \subset C^\full(G) $ 
of representative functions. A locally compact quantum group $ G $ is discrete if its dual $ \hat{G} $ is compact. 
The full Hopf $ C^* $-algebra of functions on a discrete quantum group agrees with its reduced version, and is given by a $ C^* $-direct sum of finite dimensional matrix algebras. 
The matrix blocks appearing in this decomposition correspond to the irreducible corepresentations of the discrete quantum group, or 
equivalently, to the irreducible representations of its compact dual. 

If $ G $ is a discrete quantum group we write $ \Corep(G) = \Rep(\hat{G}) $ for the $ C^* $-tensor category of finite dimensional representations of $ \hat{G} $, 
and denote by $ \dim(t) $ the dimension of the Hilbert space underlying $ t \in \Corep(G) $. The 
category $ \Corep(G) $ is rigid in the sense that every object is dualisable, that is, every $ X \in \Corep(G) $ admits a dual object $ \overline{X} \in \Corep(G) $ together with 
morphisms $ ev_X: \overline{X} \otimes X \rightarrow 1, db_X: 1 \rightarrow X \otimes \overline{X} $ 
and $ ev_{\overline{X}}: X \otimes \overline{X} \rightarrow 1, db_{\overline{X}}: 1 \rightarrow \overline{X} \otimes X $ satisfying the so-called zig zag equations. 
Here $ 1 \in \Corep(G) $ is the tensor unit. We denote by $ \Irr(\hat{G}) $ the set of equivalence classes of irreducible objects in $ \Corep(G) = \Rep(\hat{G}) $, and say 
that $ G $ is countable if $ \Irr(\hat{G}) $ is a countable set. The category $ \Corep(G) $ admits a tautological fiber functor into the category of Hilbert spaces, and conversely, 
every rigid $ C^* $-tensor category $ \T $ together with a fiber functor determines a discrete quantum group $ G $ such that $ \T \simeq \Corep(G) $. This is known as the 
Tannaka-Krein reconstruction theorem \cite{Woronowiczsuqn}. 

A \emph{morphism} $ \iota: G \rightarrow H $ of locally compact quantum groups is a nondegenerate $ * $-homomorphism $ \iota^*: C_0^\full(H) \rightarrow M(C_0^\full(G)) $ 
which is compatible with the comultiplications. Every morphism can equivalently be described by a nondegenerate $ * $-homomorphism $ \iota_*: C^*_\full(G) \rightarrow M(C^*_\full(H)) $, 
again compatible with the comultiplications. A \emph{quantum subgroup} of a discrete quantum group $ G $ is given by a full $ C^* $-tensor subcategory of the 
category $ \Corep(G) $ of corepresentations of $ G $. The \emph{direct union} $ \bigcup_{i \in I} G_i $ of a family of discrete quantum groups over a directed set $ I $, 
together with inclusion morphisms $ G_i \rightarrow G_j $ for $ i \leq j $, is defined as the discrete quantum group corresponding to the direct limit of the 
corresponding $ C^* $-tensor categories. 

We shall say that a locally compact quantum group $ G $ is \emph{strongly amenable} if the canonical quotient map $ \lambda: C^*_\full(G) \rightarrow C^*_\red(G) $ is an isomorphism. 
It is called \emph{coamenable} if $ \hat{\lambda}: C_0^\full(G) \rightarrow C_0^\red(G) $ is an isomorphism \cite{BMTcqgcoamenability}. In either case 
we simply write $ C^*(G) $ instead of $ C^*_\full(G) \cong C^*_\red(G) $ or $ C_0(G) $ instead of $ C_0^\full(G) \cong C_0^\red(G) $, respectively. 
Every classical locally compact group is coamenable, and strongly amenable iff it is amenable. The same is true for discrete quantum groups \cite{Tomatsuamenablediscrete}. 
A discrete quantum group $ G $ is amenable iff the counit of $ \Poly(\hat{G}) $ extends to a bounded $ * $-homomorphism $ \epsilon: C^*_\red(G) \rightarrow \mathbb{C} $.  
It follows that if $ G = \bigcup_{i \in I} G_i $ is the direct union of a directed family of quantum subgroups $ G_i \subset G $ then the discrete quantum group $ G $ is 
amenable iff all $ G_i $ are amenable. In particular, a discrete quantum group is amenable iff all its countable quantum subgroups are amenable. 

Let $ G $ be a locally compact quantum group and $ B $ a $ C^* $-algebra. An \emph{action} of $ G $ on $ B $ is an 
injective $ * $-homomorphism $ \beta: B \rightarrow M(C_0^\red(G) \otimes B) $ such that $ (\Delta \otimes \id)\beta = (\id \otimes \beta) \beta $ 
and $ [\beta(B) (C_0^\red(G) \otimes 1)] = C_0^\red(G) \otimes B $. If $ G $ is a classical locally compact group this is equivalent to a strongly continuous action 
of $ G $ on $ B $ by $ * $-automorphisms. 

Next we review the definition of quantum permutation groups in the sense of Wang \cite{Wangqsymmetry}. By construction, the quantum permutation group $ S_n^+ $ is the quantum 
automorphism group of $ A = \mathbb{C}^n $, and we have the following explicit description. 

\begin{definition} 
Let $ n \in \mathbb{N} $. The quantum permutation group $ S_n^+ $ is the compact quantum group given by the universal $ C^* $-algebra generated by 
the entries of a magic unitary $ n \times n $-matrix $ u = (u_{ij}) $, that is, $ C^\full(S_n^+) $ the universal $ C^* $-algebra generated by projections $ u_{ij} $ 
for $ 1 \leq i,j \leq n $ such that 
\begin{equation*}
\sum_{i = 1}^n u_{ik} = 1, \qquad \sum_{j = 1}^n u_{kj} = 1 
\end{equation*}
for all $ 1 \leq k \leq n $. The comultiplication $ \Delta: C^\full(S_n^+) \rightarrow C^\full(S_n^+) \otimes C^\full(S_n^+) $ is defined by 
$ \Delta(u_{ij}) = \sum_{k = 1}^n u_{ik} \otimes u_{kj} $ on the generators. 
\end{definition} 

One obtains a canonical morphism of quantum groups $ S_n \rightarrow S_n^+ $, that is, a unital $ * $-homomorphism $ C^\full(S_n^+) \rightarrow C(S_n) $ compatible 
with comultiplications, where $ S_n $ is the symmetric group on $ n $ elements. In fact, $ C(S_n) $ is the abelianisation of $ C^\full(S_n^+) $. 

The structure of the quantum permutation group $ S_n^+ $ is well-understood for small values of $ n $, compare \cite{Banicasmallmetric}. In particular, 
for $ n = 1,2,3 $ the canonical morphism $ S_n \rightarrow S_n^+ $ is an isomorphism. For $ n = 4 $ the morphism $ S_n \rightarrow S_n^+ $ is 
no longer an isomorphism, and the $ C^* $-algebra $ C^\full(S_4^+) $ is infinite dimensional. While the quantum group $ S_4^+ $ is still coamenable, this is not the case 
for $ S_n^+ $ if $ n \geq 5 $. 

Let us also review the definition of quantum automorphism groups of finite graphs, see \cite{Banicasmallmetric}, \cite{Banicaqutgraph}. Here by a finite 
graph $ X = (V_X, E_X) $ we mean an undirected simple graph without loops, given by a finite set $ V_X $ of vertices and a set $ E_X \subset V_X \times V_X $ of edges such 
that $ (v,w) \in E_X $ iff $ (w,v) \in E_X $, and $ (v,v) \notin E_X $ for all $ v \in V_X $. 
The adjacency matrix of $ X = (V_X, E_X) $ is the matrix $ A_X \in M_{V_X}(\{0,1\}) $ determined by 
$$
(A_X)_{x,y} = 1 \Leftrightarrow (x,y) \in E_X, 
$$
and it determines the graph uniquely. Note that $ A_X $ can be viewed as a linear operator $ l^2(V_X) \rightarrow l^2(V_X) $. 

\begin{definition} \label{defqut}
Let $ X = (V_X, E_X) $ be a finite graph with adjacency matrix $ A_X $. The quantum automorphism group $ \Qut(X) $ of $ X $ is given by the 
universal $ C^* $-algebra $ C^\full(\Qut(X)) $ generated by elements $ u_{xy} $ for $ x,y \in V_X $ such that $ u = (u_{xy}) $ is a magic unitary matrix satisfying 
$$
u A_X = A_X u.  
$$
The comultiplication is given by $ \Delta(u_{xy}) = \sum_{z \in V_X} u_{xz} \otimes u_{zy} $ on the generators. 
\end{definition} 

By construction, the quantum automorphism group $ \Qut(X) $ is a quantum subgroup of $ S_n^+ $ for $ n = |V_X| $. The defining relation $ u A_X = A_X u $ can be equivalently 
expressed as saying  
$$
u_{vw} u_{xy} = 0 \text{ if } \rel(v,x) \neq \rel(w,y),  
$$
where the function $ \rel $ encodes the adjacency relation between vertices, which takes one of the values \emph{equal}, or \emph{adjacent}, or \emph{distinct and non-adjacent}.

\section{Infinite quantum permutations} \label{secinfiniteqquantumpermutations}

Throughout this section we fix a set $ X $, and we write $ \Sym(X) $ for the group of all permutations of $ X $.  
In the sequel we will mostly be interested in the case that $ X $ is countable, but the constructions work in general. 
 
The starting point of our discussion is the following definition, which is more or less implicit in the literature on quantum automorphisms in the case 
that $ X $ is finite. 

\begin{definition} \label{defquantumpermutation}
Let $ X $ be a set. A quantum permutation of $ X $ is a pair $ \sigma = (\HH_\sigma, u^\sigma) $ consisting of a Hilbert space $ \HH_\sigma $ 
and a family $ u^\sigma = (u^\sigma_{xy})_{x,y \in X} $ of projections $ u^\sigma_{xy} \in B(\HH_\sigma) $ such that 
\begin{bnum} 
\item[$\bullet$] For every $ x \in X $ the projections $ u^\sigma_{xz} $ for $ z \in X $ are pairwise orthogonal. 
\item[$\bullet$] For every $ y \in X $ the projections $ u^\sigma_{zy} $ for $ z \in X $ are pairwise orthogonal.
\item[$\bullet$] We have 
$$ 
\sum_{z \in X} u^\sigma_{xz} = 1 = \sum_{z \in X} u^\sigma_{zy} 
$$
for all $ x,y \in X $, with convergence understood in the strong operator topology. 
\end{bnum}
If $ \sigma = (\HH_\sigma, u^\sigma) $ and $ \tau = (\HH_\tau, u^\tau) $ are quantum permutations of $ X $ then an intertwiner from $ \sigma $ to $ \tau $ is a 
bounded linear operator $ T: \HH_\sigma \rightarrow \HH_\tau $ such that $ T u^\sigma_{xy} = u^\tau_{xy} T $ for all $ x, y \in X $. 
\end{definition} 

Note that the convergence of the infinite sums in Definition \ref{defquantumpermutation} can be interpreted equivalently in any of 
the weak, strong, strong*, $ \sigma $-weak, $ \sigma $-strong or $ \sigma $-strong* topologies. 
In the sequel all infinite sums of families $ (p_i)_{i \in I} $ of pairwise orthogonal projections will be understood this way, and if $ \sum_{i \in I} p_i = 1 $ then we also say
that $ (p_i)_{i \in I} $ is a \emph{partition of unity}. We can thus rephrase Definition \ref{defquantumpermutation} as saying that a 
quantum permutation of a set $ X $ is a matrix of projections indexed by $ X $ such that all rows and columns form partitions of unity. 
Sometimes we shall also refer to such a quantum permutation as a \emph{magic unitary} indexed by $ X $. It is not hard to see that 
the first two conditions for a quantum permutation in Definition \ref{defquantumpermutation} are in fact a consequence of the third. 

By the \emph{dimension} of a quantum permutation $ \sigma = (\HH_\sigma, u^\sigma) $ we mean the dimension of its underlying Hilbert space. 
We note that a finite dimensional quantum permutation is row- and column-finite in the sense that for all $ x,y \in X $ we have $ u^\sigma_{xz} = 0 $ 
and $ u^\sigma_{zy} = 0 $ for all but finitely many $ z $. 

We say that two quantum permutations $ \sigma = (\HH_\sigma, u^\sigma) $ and $ \tau = (\HH_\tau, u^\tau) $ are \emph{unitarily equivalent} if there exists a unitary 
intertwiner between them. A quantum permutation $ \sigma $ is called \emph{irreducible} if the only intertwiners from $ \sigma $ to itself are multiples of the identity.  

\begin{lemma} \label{onedim}
Unitary equivalence classes of one-dimensional quantum permutations of a set $ X $ correspond bijectively to permutations of $ X $.  
\end{lemma}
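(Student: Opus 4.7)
The plan is to unwind Definition \ref{defquantumpermutation} in the one-dimensional case and observe that it forces the magic unitary to be a classical permutation matrix, up to which the unitary equivalence relation collapses to equality.

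First I would fix a one-dimensional quantum permutation $\sigma=(\HH_\sigma,u^\sigma)$ with $\HH_\sigma=\mathbb{C}$. Each $u^\sigma_{xy}$ is then a projection in $B(\mathbb{C})=\mathbb{C}$, so it takes one of the scalar values $0$ or $1$. The row condition $\sum_{z\in X}u^\sigma_{xz}=1$ in $\mathbb{C}$, together with the fact that these projections are pairwise orthogonal (as already noted just after Definition \ref{defquantumpermutation}, or directly from $u^\sigma_{xz}u^\sigma_{xz'}\in\{0,1\}$ summing to a finite quantity bounded by $1$), means that for every $x\in X$ there is a unique element $\pi(x)\in X$ with $u^\sigma_{x,\pi(x)}=1$ and $u^\sigma_{xz}=0$ for $z\ne\pi(x)$. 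Applying the same argument to columns produces a unique element $\pi'(y)\in X$ with $u^\sigma_{\pi'(y),y}=1$. The equality $u^\sigma_{xy}=1$ then characterises both $y=\pi(x)$ and $x=\pi'(y)$, so $\pi'=\pi^{-1}$ and $\pi\in\Sym(X)$.

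Conversely, given $\pi\in\Sym(X)$ I would set $u^\pi_{xy}=\delta_{y,\pi(x)}\in\mathbb{C}=B(\mathbb{C})$ and check directly that the three conditions of Definition \ref{defquantumpermutation} hold, giving a one-dimensional quantum permutation. The two constructions $\sigma\mapsto\pi$ and $\pi\mapsto(\mathbb{C},u^\pi)$ are clearly mutually inverse on the level of representatives.

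Finally, I would verify that this correspondence descends to unitary equivalence classes, which amounts to showing that in dimension one, unitary equivalence is equality of the matrices $u^\sigma$. A unitary intertwiner $T:\mathbb{C}\to\mathbb{C}$ is multiplication by some $\lambda$ with $|\lambda|=1$, and the intertwining relation $Tu^\sigma_{xy}=u^\tau_{xy}T$ reads $\lambda u^\sigma_{xy}=\lambda u^\tau_{xy}$ for all $x,y\in X$. Since $\lambda\ne 0$ this forces $u^\sigma_{xy}=u^\tau_{xy}$, and the inverse direction is trivial with $T=\id$. Combined with the preceding bijection this gives the claim. There is no real obstacle here, only careful bookkeeping; the only point worth emphasising is that the strong-operator convergence in Definition \ref{defquantumpermutation} reduces, in the one-dimensional setting, to ordinary convergence of scalars, which is why exactly one summand in each row and column can be nonzero.
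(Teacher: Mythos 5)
Your argument is correct and essentially the same as the paper's: in dimension one the projections are scalars $0$ or $1$, the row and column partitions of unity force exactly one entry $1$ in each row and column, and this yields the bijection with $\Sym(X)$ (the paper uses the convention $u^\sigma_{xy}=\delta_{x\sigma(y)}$ rather than your $\delta_{y,\pi(x)}$, which is immaterial). Your extra check that unitary equivalence in dimension one reduces to equality of the matrices is a harmless elaboration of what the paper leaves implicit.
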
 

\begin{proof} 
This is almost immediate from the definitions. If $ \sigma \in \Sym(X) $ is a permutation then we obtain a quantum permutation $ Q_\sigma = (\mathbb{C}, u^\sigma) $ 
by setting $ u^\sigma_{xy} = \delta_{x \sigma(y)} $. Conversely, let $ \sigma = (\HH_\sigma, u^\sigma) $ be a quantum permutation of $ X $ of dimension one. Since the 
only projections in $ B(\HH_\sigma) = \mathbb{C} $ are $ 0 $ and $ 1 $, there exists for each $ y \in X $ a uniquely determined element $ C_\sigma(y) \in X $ such 
that $ u^\sigma_{C_\sigma(y) y} = 1 $, and this defines a permutation $ C_\sigma \in \Sym(X) $. These assignments yield mutually inverse bijections as claimed. 
\end{proof} 

By slight abuse of terminology, it is sometimes convenient not to distinguish between quantum permutations and their unitary equivalence classes and refer to the latter as 
quantum permutations as well. With this understood, the \emph{trivial quantum permutation} of $ X $ is the quantum permutation corresponding to the identity permutation 
in Lemma \ref{onedim}. 

Let us now discuss some standard procedures for constructing new quantum permutations out of given ones. 

\begin{definition} \label{deftensorstructure}
Let $ X $ be a set. 
\begin{bnum} 
\item[$\bullet$] The direct sum of quantum permutations $ \sigma = (\HH_\sigma, u^\sigma) $ and $ \tau = (\HH_\tau, u^\tau) $ of $ X $ is defined 
by $ \sigma \oplus \tau = (\HH_\sigma \oplus \HH_\tau, u^\sigma \oplus u^\tau) $,  where $ (u^\sigma \oplus u^\tau)_{xy} = u^\sigma_{xy} \oplus u^\tau_{xy} $ 
for all $ x,y \in X $. 

\item[$\bullet$] The tensor product of quantum permutations $ \sigma = (\HH_\sigma, u^\sigma) $ and $ \tau = (\HH_\tau, u^\tau) $ is defined 
by $ \sigma \otimes \tau = (\HH_\sigma \otimes \HH_\tau, u^\sigma \tp u^\tau) $ 
where $ (u^\sigma \tp u^\tau)_{xy} = \sum_{z \in X} u^\sigma_{xz} \otimes u^\tau_{zy} $ for all $ x,y \in X $. 

\item[$\bullet$] The contragredient $ \overline{\sigma} = (\HH_{\overline{\sigma}}, u^{\overline{\sigma}}) $ of a quantum permutation $ \sigma = (\HH_\sigma, u^\sigma) $ 
is defined by taking $ \HH_{\overline{\sigma}} $ to be the conjugate Hilbert space of $ \HH_\sigma $ and the family of 
projections $ u^{\overline{\sigma}} = (u^{\overline{\sigma}}_{xy}) $ determined by $ u^{\overline{\sigma}}_{xy}(\overline{\xi}) = \overline{u^\sigma_{yx}(\xi)} $ 
for $ \xi \in \HH_\sigma $. 
\end{bnum} 
\end{definition} 

It is straightforward to check that all operations listed in Definition \ref{deftensorstructure} are compatible with intertwiners in a natural way and yield indeed quantum permutations. 
This leads us immediately to the following observation. 

\begin{lemma} 
Let $ X $ be a set. The collection of all quantum permutations of $ X $ and their intertwiners forms naturally a $ C^* $-tensor category. 
\end{lemma}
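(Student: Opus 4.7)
The plan is to verify in turn each structural piece of a $C^*$-tensor category: the $C^*$-category axioms, the tensor bifunctor on objects and on morphisms, the associator and the unit, and compatibility with the $*$-structure. First, I observe that for quantum permutations $\sigma, \tau$ the space $\Mor(\sigma, \tau) \subset B(\HH_\sigma, \HH_\tau)$ of intertwiners is a norm-closed linear subspace, closed under the adjoint (since each $u^?_{xy}$ is self-adjoint one has $T u^\sigma_{xy} = u^\tau_{xy} T \Leftrightarrow u^\sigma_{xy} T^* = T^* u^\tau_{xy}$), and closed under composition; the $C^*$-identity $\|T^*T\| = \|T\|^2$ is inherited from $B(\HH)$. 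This gives the $C^*$-category structure with no additional work.

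Next I would check that the formula $\sigma \otimes \tau$ from Definition \ref{deftensorstructure} really lands in quantum permutations. The crucial point is that, for fixed $x,y$, the summands $u^\sigma_{xz} \otimes u^\tau_{zy}$ are pairwise orthogonal in $z$: from row-orthogonality of $u^\sigma$ one has $u^\sigma_{xz} u^\sigma_{xz'} = 0$ for $z \neq z'$, so the product of any two distinct summands vanishes. This is what makes the SOT-sum converge to a projection $(u^\sigma \tp u^\tau)_{xy}$, and a short computation gives
\[
\sum_y (u^\sigma \tp u^\tau)_{xy} = \sum_z u^\sigma_{xz} \otimes \Bigl(\sum_y u^\tau_{zy}\Bigr) = \sum_z u^\sigma_{xz} \otimes 1 = 1 \otimes 1,
\]
and analogously for column sums, confirming $\sigma \otimes \tau$ is a quantum permutation. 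Functoriality on morphisms is then immediate: for $S \in \Mor(\sigma,\sigma')$, $T \in \Mor(\tau,\tau')$ one verifies
\[
(S \otimes T)(u^\sigma \tp u^\tau)_{xy} = \sum_z S u^\sigma_{xz} \otimes T u^\tau_{zy} = \sum_z u^{\sigma'}_{xz} S \otimes u^{\tau'}_{zy} T = (u^{\sigma'} \tp u^{\tau'})_{xy}(S \otimes T),
\]
where the exchange of $S \otimes T$ with the SOT-sum is justified by continuity and pairwise orthogonality of the summands.

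For the associator and unit I would use the canonical Hilbert-space isomorphisms $(\HH_\sigma \otimes \HH_\tau) \otimes \HH_\rho \cong \HH_\sigma \otimes (\HH_\tau \otimes \HH_\rho)$ and $\mathbb{C} \otimes \HH_\sigma \cong \HH_\sigma \cong \HH_\sigma \otimes \mathbb{C}$. Expanding both parenthesizations of $\sigma \otimes \tau \otimes \rho$ yields the same doubly-indexed orthogonal sum $\sum_{z,w} u^\sigma_{xz} \otimes u^\tau_{zw} \otimes u^\rho_{wy}$, so the Hilbert-space associator is automatically an intertwiner, and its naturality and pentagon identity are inherited from $\Hilb$. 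For the unit, Lemma \ref{onedim} gives the trivial quantum permutation with $u^1_{xy} = \delta_{xy}$, and then $(u^1 \tp u^\sigma)_{xy} = \sum_z \delta_{xz} \otimes u^\sigma_{zy} = 1 \otimes u^\sigma_{xy}$ shows $1 \otimes \sigma \cong \sigma$ via the canonical left unitor, similarly on the right; triangle compatibility is again inherited. Finally, compatibility with the $*$-structure, $(S \otimes T)^* = S^* \otimes T^*$, is the standard identity on tensor products of Hilbert-space operators.

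The only genuine obstacle is the convergence question for infinite $X$ and infinite-dimensional $\HH_\sigma, \HH_\tau$, since one must ensure the sum $\sum_z u^\sigma_{xz} \otimes u^\tau_{zy}$ converges to a projection and interacts correctly with composition and the adjoint. This is settled, as indicated above, by noting that the summands form a family of pairwise orthogonal projections, so that SOT-convergence, boundedness by $1$, and preservation under slot-wise multiplication by fixed bounded operators all hold automatically. Once this is set up, the remaining verifications are routine bookkeeping mirroring the finite case.
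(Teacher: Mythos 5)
Your proposal is correct and follows exactly the direct verification that the paper leaves implicit (it only remarks that the operations of Definition \ref{deftensorstructure} are compatible with intertwiners and yield quantum permutations): you check closure of the tensor product via pairwise orthogonality of the summands $u^\sigma_{xz}\otimes u^\tau_{zy}$, SOT-convergence of the row and column sums to $1$, functoriality, and the canonical Hilbert-space associator and unit. The key point you identify, that orthogonality of the summands makes the strong sums well behaved under composition and adjoints, is precisely what makes the paper's ``straightforward to check'' legitimate.
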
 

As we will discuss in more detail further below, one obtains a basic supply of quantum permutations of an infinite set by combining 
representations of $ C(S_n^+) $ for some $ n \in \mathbb{N} $ with classical permutations. 

Let us describe a different source of infinite quantum permutations. By definition, a \emph{partial quantum permutation} $ \sigma = (\HH, A, B, u) $ of
a set $ X $ consists of a Hilbert space $ \HH $, subsets $ A, B \subset X $ and projections $ u_{xy} \in B(\HH) $ for $ (x,y) \in A \times X \cup X \times B $ such that 
\begin{bnum} 
\item[$\bullet$] For every $ x \in X $ the projections $ u_{xz} $ are pairwise orthogonal whenever they are defined, 
\item[$\bullet$] For every $ y \in X $ the projections $ u_{zy} $ are pairwise orthogonal whenever they are defined,
\item[$\bullet$] For all $ x \in A $ and $ y \in B $ we have $ \sum_{z \in X} u_{xz} = 1 = \sum_{z \in X} u_{zy} $. 
\end{bnum} 
We call $ A $ the \emph{domain} and $ B $ the \emph{range} of the partial quantum permutation. The collection of all partial quantum permutations on a fixed Hilbert space is partially 
ordered by saying that $ \sigma \leq \tau $ if the domain and range of $ \sigma $ are contained in the domain and range of $ \tau $, respectively, and the operators $ u^\sigma_{xy} $ 
and $ u^\tau_{xy} $ agree whenever the former are defined. Of course, a partial quantum permutation with $ A = X = B $ is nothing but a quantum permutation in the sense of 
Definition \ref{defquantumpermutation}.  

Let $ X $ be a countable set and write $ e_x $ for the canonical basis vector of $ \ell^2(X) $ associated with $ x \in X $. We define the \emph{support} of a 
projection $ p \in B(\ell^2(X)) $ as the set of all elements $ x \in X $ such that $ p e_x $ is nonzero. 
Moreover we say that a partial quantum permutation $ \sigma = (\ell^2(X), A, B, u) $ of $ X $ has \emph{locally finite rank} if 
\begin{bnum} 
\item[$\bullet$] for every $ x \in A $ and $ v \in X $ there are only finitely many elements $ y \in X $ such that $ v $ is 
contained in the support of $ u_{xy} $,  
\item[$\bullet$] for every $ y \in B $ and $ v \in X $ there are only finitely many elements $ x \in X $ such that $ v $ is contained in the support of $ u_{xy} $, 
\item[$\bullet$] the support of the projection $ u_{xy} $ is finite for all $ (x,y) \in A \times X \cup X \times B $. 
\end{bnum} 
In order to construct quantum permutations of $ X $ we can now use a variant of the back and forth method, compare for instance section 2.4 in \cite{MARKER_modeltheory}. 

Fix an enumeration $ X = \{x_1, x_2, x_3, \dots \} $. In a first step we set $ A_1 = \{x_1\} $ and $ B_0 = \emptyset $. Moreover we let $ (u_{x_1 y})_{y \in X} $ be a 
partition of unity in $ B(l^2(X)) $ consisting of finitely supported projections, such that each element of $ X $ is contained in the support of only finitely 
many $ u_{x_1 y} $. 
For instance, we can take $ u_{x_1 y} $ to be the orthogonal projection corresponding to the canonical basis vector $ e_y \in \ell^2(X) $. We obtain a partial quantum 
permutation $ \sigma_{1,0} = (\ell^2(X), A_1, B_0, u) $ this way. 

Now assume that we have constructed a partial quantum permutation $ \sigma_{m,n} = (\ell^2(X), A_m, B_n, u) $ of $ X $ of locally finite rank 
with domain $ A_m =\{x_1, \dots, x_m\} $ and range $ B_n = \{x_1, \dots, x_n\} $. We can then extend the domain of $ \sigma $ by adding $ x_{m + 1} $ to $ A_m $. 
More precisely, let $ p = \sum_{j = 1}^n u_{x_{m + 1} x_j} $, 
and consider an arbitrary family $ (p_k)_{k \in \mathbb{N}} $ of finitely supported projections in $ l^2(X) $ such 
that each element of $ X $ is contained in the support of only finitely many $ p_k $ and $ \sum_{k \in \mathbb{N}} p_k = 1 - p $. 
By construction of $ \sigma_{m,n} $, the support of $ p_k $ intersects nontrivially with the support of only finitely many of the existing projections $ u_{xy} $. 
Hence we find $ y \in X $ such that $ p_k $ is orthogonal to all projections $ u_{x_i y} $ for $ 1 \leq i \leq m $,  and we set $ u_{x_{m + 1} y} = p_k $. This can be done for 
all $ k \in \mathbb{N} $, and we let the remaining operators $ u_{x_{m + 1} y} $ be zero. By construction this yields a partial quantum permutation $ \sigma_{m + 1, n} $ 
of $ X $ of locally finite rank with domain $ A_{m + 1} = A_m \cup \{x_{m + 1}\} $ and range $ B_n $. 

In a similar way we can extend the range of a partial quantum permutation $ \sigma_{m,n} $ of locally finite rank as above. Taking unions we see that there exists a quantum 
permutation $ \sigma = (\ell^2(X), u) $ of $ X $ of locally finite rank which restricts to the partial quantum permutations $ \sigma_{m,n} $ obtained along the way 
for all $ m, n \in \mathbb{N} $. In particular, every point of $ X $ is contained in the domain and range of $ \sigma $ as required. 

We note that, with suitable choices, we can in fact arrange the resulting quantum permutation to be irreducible.

\section{Infinite quantum permutation groups} \label{secinfiniteqpg}

In this section we shall restrict our attention to the class of finite dimensional quantum permutations of a set and the associated quantum groups. 

If $ \sigma = (\HH_\sigma, u^\sigma) $ is a finite dimensional quantum permutation of a set $ X $ then it is a dualisable object in the $ C^* $-tensor category of all quantum 
permutations of $ X $, and its dual is given by the contragredient quantum permutation $ \overline{\sigma} $. That is, 
the full subcategory of all finite dimensional quantum permutations of $ X $ forms naturally a rigid $ C^* $-tensor category, compare \cite{NTlecturenotes}. 
This category admits a tautological fiber functor to the category of finite dimensional Hilbert spaces. 

\begin{definition} \label{defsymplus}
Let $ X $ be a set. The quantum permutation group $ \Sym^+(X) $ is the discrete quantum group obtained from the rigid $ C^* $-tensor category $ \SSym^+(X) $ 
of finite dimensional quantum permutations of $ X $ together with its tautological fiber functor via Tannaka-Krein reconstruction. 
\end{definition} 

By construction, the underlying $ C^* $-algebra of functions on $ \Sym^+(X) $ can be written as the $ C^* $-direct sum of matrix algebras
$$
C_0(\Sym^+(X)) = \bigoplus_{\sigma} B(\HH_\sigma), 
$$
where the direct sum is taken over the set of isomorphism classes of irreducible objects in $ \SSym^+(X) $. According to Lemma \ref{onedim} one obtains a canonical 
surjective $ * $-homomorphism $ C_0(\Sym^+(X)) \rightarrow C_0(\Sym(X)) $ by projecting onto the direct sum of 
all one-dimensional matrix blocks. This shows already that the $ C^* $-algebra $ C_0(\Sym^+(X)) $ is not separable if $ X $ is infinite. 

To describe the quantum group structure of $ C_0(\Sym^+(X)) $ it is convenient to work with the \emph{universal quantum permutation} over $ X $, by which 
we mean the family $ u = (u_{xy})_{x,y \in X} $ of elements $ u_{xy} \in M(C_0(\Sym^+(X))) $ with components $ u^\sigma_{xy} $. The coproduct of $ C_0(\Sym^+(X)) $ is 
the uniquely determined nondegenerate $ * $-homomorphism $ \Delta: C_0(\Sym^+(X)) \rightarrow M(C_0(\Sym^+(X)) \otimes C_0(\Sym^+(X))) $ satisfying 
$$
\Delta(u_{xy}) = \sum_{z \in X} u_{xz} \otimes u_{zy} 
$$
for all $ x,y \in X $, with the sum on the right hand side converging in the strict topology. 
The counit $ \epsilon: C_0(\Sym^+(X)) \rightarrow \mathbb{C} $ is the unique $ * $-homomorphism satisfying $ \epsilon(u_{xy}) = \delta_{xy} $. 
Moreover the formula $ S(u_{xy}) = u_{yx} $ determines a $ * $-antihomomorphism $ C_0(\Sym^+(X)) \rightarrow C_0(\Sym^+(X)) $ encoding the antipode. In particular, 
the quantum permutation group $ \Sym^+(X) $ is unimodular.  

As already indicated above, the quantum group $ \Sym^+(X) $ is ``large'' if $ X $ is infinite. In fact, it is easy to check that there are 
uncountably many mutually non-isomorphic irreducible quantum permutations in every positive dimension in this case.  

Let us also note that this quantum group does not have any of the standard approximation properties. More precisely, if $ X $ is infinite then $ \Sym^+(X) $ 
does not have the Haagerup property and is not weakly amenable. This follows simply from the corresponding facts for the classical group $ \Sym(X) $ together 
with the observation that these properties pass to quantum subgroups. 
Indeed, the passage of the Haagerup property to quantum subgroups of discrete quantum groups is a special case of Proposition 6.8 in \cite{DFSWhaagerup}, 
noting that the argument given there does not rely on second countability assumptions. 
Inheritance of weak amenability by quantum subgroups of discrete quantum groups is shown in Proposition 3.1 of \cite{Freslonpermanence}.
At the same time we remark that $ \Sym^+(X) $ does not have property (T) since it is not finitely generated, compare \cite{FIMA_propertyt}. 

We shall now explain how infinite quantum permutations can be viewed as \emph{universal quantum symmetries}, in analogy 
to the situation for quantum automorphism groups of finite dimensional $ C^* $-algebras. However, in contrast to the compact quantum groups considered in \cite{Wangqsymmetry}, 
we obtain universal objects among discrete quantum groups in our setting. 

Let us fix some notation and terminology. Assume that $ B $ is a $ C^* $-algebra with positive part $ B_+ $, and let $ \theta: B_+ \rightarrow [0,\infty] $ be a 
faithful proper weight \cite{KUSTERMANS_kmsweights}. 
We write $ \M_\theta^+ = \{b \in B_+ \mid \theta(b) < \infty \} $ for the set of $ \theta $-integrable elements of $ B $. 
If $ \beta: B \rightarrow M(C_0^\red(G) \otimes B) $ is an action of a locally compact quantum group $ G $ on $ B $ then $ \theta $ is called invariant 
with respect to $ \beta $ if $ \theta((\omega \otimes \id)\beta(b)) = \theta(b) \omega(1) $ for all $ b \in \M_\theta^+ $ and all positive linear
functionals $ \omega \in C_0^\red(G)_+^* $. 

We shall say that a discrete quantum group $ G $ is the \emph{discrete quantum symmetry group} of $ (B, \theta) $ if there exists an 
action $ \beta: B \rightarrow M(C_0(G) \otimes B) $ such that $ \theta $ is invariant with respect to $ \beta $ and the following universal property is satisfied. 
If $ H $ is an arbitrary discrete quantum group together with an action $ \gamma: B \rightarrow M(C_0(H) \otimes B) $ 
such that $ \theta $ is invariant with respect to $ \gamma $, then there exists a unique morphism of quantum groups $ \iota: H \rightarrow G $ such that the diagram 
$$
\xymatrix{
B \ar@{->}[r]^{\!\!\!\!\!\!\!\!\!\!\!\!\!\!\!\!\!\!\!\!\!\! \beta} \ar@{->}[rd]_{\gamma} & M(C_0(G) \otimes B) \ar@{->}[d]^{\iota^* \otimes \id} \\ 
& M(C_0(H) \otimes B) 
}
$$
is commutative. Here $ \iota^*: C_0(G) \rightarrow M(C_0(H)) $ denotes the nondegenerate $ * $-homomorphism corresponding to the morphism $ \iota $. 

It is straightforward to check that the discrete quantum symmetry group of a $ C^* $-algebra $ B $ is uniquely determined up to isomorphism. 
If $ X $ is a set and $ B = C_0(X) $ is equipped with the weight induced by counting measure we shall call the corresponding discrete quantum symmetry group 
the \emph{universal discrete quantum group acting on $ X $}.
The following result shows in particular that this quantum group indeed exists. 

\begin{prop} \label{quantumsymmetryuniversal}
Let $ X $ be a set. The quantum permutation group $ \Sym^+(X) $ is the universal discrete quantum group acting on $ X $. 
\end{prop}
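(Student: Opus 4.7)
I would first construct a natural action of $\Sym^+(X)$ on $C_0(X)$ using the universal magic unitary, and then show that any other action on $C_0(X)$ by a discrete quantum group factors through it via a unique morphism. Concretely, define
\[
\beta: C_0(X) \to M(C_0(\Sym^+(X)) \otimes C_0(X)), \qquad \beta(\delta_y) = \sum_{x \in X} u_{xy} \otimes \delta_x,
\]
with the sum converging strictly. The magic unitary relations imply that $\beta$ is a well-defined nondegenerate $*$-homomorphism, the coaction property $(\Delta \otimes \id)\beta = (\id \otimes \beta)\beta$ is immediate from $\Delta(u_{xy}) = \sum_z u_{xz} \otimes u_{zy}$, and invariance under counting measure follows from the column-sum identity $\sum_x u_{xy} = 1$.

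For the universal property, let $H$ be a discrete quantum group with an invariant action $\gamma: C_0(X) \to M(C_0(H) \otimes C_0(X))$. Since the minimal projections $\delta_x$ generate $C_0(X)$, I would decompose $\gamma(\delta_y) = \sum_{x} v_{xy} \otimes \delta_x$ with $v_{xy} \in M(C_0(H))$, and check directly that $v = (v_{xy})$ is a magic unitary indexed by $X$: the entries are projections because $\gamma$ is a $*$-homomorphism and each $\delta_y$ is one, the row relation $\sum_y v_{xy} = 1$ is equivalent to $\gamma(1) = 1$, and the column relation $\sum_x v_{xy} = 1$ is precisely $\gamma$-invariance of counting measure.

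The key step is then to use the block decomposition $C_0(H) = \bigoplus_{t \in \Irr(\hat H)} B(\HH_t)$ to write $v_{xy} = (v^t_{xy})_t$, so that for each irreducible $t$ the family $v^t = (v^t_{xy})$ is a finite-dimensional quantum permutation of $X$, i.e., an object $(\HH_t, v^t) \in \SSym^+(X)$. I would assemble these into a unitary $C^*$-tensor functor
\[
F: \Corep(H) \to \SSym^+(X), \qquad F(t) = (\HH_t, v^t),
\]
acting as the identity on underlying Hilbert spaces and hence trivially compatible with the canonical fiber functors. Since $\SSym^+(X) = \Corep(\Sym^+(X))$ by Definition~\ref{defsymplus}, Tannaka-Krein reconstruction then supplies a unique morphism of discrete quantum groups $\iota: H \to \Sym^+(X)$ with $\iota^*(u_{xy}) = v_{xy}$, and this is exactly the commutativity of the required diagram together with uniqueness of $\iota$.

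The main technical obstacle is verifying that $F$ is genuinely a unitary $*$-tensor functor. On morphisms, one must check that every intertwiner $T: t \to s$ of corepresentations of $H$ satisfies $T v^t_{xy} = v^s_{xy} T$; on objects, tensoriality amounts to the identity $v^{t \otimes s}_{xy} = \sum_z v^t_{xz} \otimes v^s_{zy}$ under $\HH_{t \otimes s} = \HH_t \otimes \HH_s$. Both of these should be extracted from the coaction identity $\Delta_H(v_{xy}) = \sum_z v_{xz} \otimes v_{zy}$ (itself a reformulation of $(\Delta_H \otimes \id)\gamma = (\id \otimes \gamma)\gamma$) by tracking how $\Delta_H$ interacts with the block decomposition of $C_0(H)$ and the fusion rules of $\Corep(H)$, and this is where the bookkeeping requires care.
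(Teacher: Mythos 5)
Your proposal is correct in substance and its first half coincides with the paper's argument: the action $\beta$ built from the universal magic unitary, the decomposition $\gamma(\delta_y)=\sum_x v_{xy}\otimes\delta_x$, and the verification that $v$ is a magic unitary with one sum coming from nondegeneracy of $\gamma$ and the other from invariance of the counting weight. Where you diverge is the final step: the paper defines the morphism directly on the algebra level, using that $C_0(H)$ is a $C^*$-direct sum of matrix blocks to assemble the finite dimensional quantum permutations $(\HH_t,v^t)$ into a single nondegenerate $*$-homomorphism $\iota^*:C_0(\Sym^+(X))\to M(C_0(H))$ with $\iota^*(u_{xy})=v_{xy}$, and then checks compatibility with the comultiplications; you instead package the same data as a unitary tensor functor $\Corep(H)\to\SSym^+(X)$ commuting with the fiber functors and invoke Tannaka--Krein. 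Since $\Sym^+(X)$ is \emph{defined} via Tannaka--Krein from $\SSym^+(X)$, the two routes are equivalent, and both hinge on the same identity $\Delta_H(v_{xy})=\sum_z v_{xz}\otimes v_{zy}$ extracted from coassociativity of $\gamma$; your ``bookkeeping'' worry largely evaporates once one identifies $\Corep(H)$ with the category of finite dimensional nondegenerate $*$-representations of $C_0(H)$ with tensor product $\pi\tp\rho=(\pi\otimes\rho)\Delta_H$, because then intertwiners of corepresentations automatically intertwine the multipliers $v_{xy}$, and tensoriality is exactly the displayed coaction identity evaluated blockwise. The paper's algebra-level formulation is marginally shorter; your categorical formulation makes the uniqueness of $\iota$ and the identification $\iota^*(u_{xy})=v_{xy}$ slightly more transparent. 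Two small caveats: with the paper's convention $\Delta(u_{xy})=\sum_z u_{xz}\otimes u_{zy}$, your formula $\beta(\delta_y)=\sum_x u_{xy}\otimes\delta_x$ does \emph{not} satisfy $(\Delta\otimes\id)\beta=(\id\otimes\beta)\beta$; you must either use $\beta(\delta_x)=\sum_y u_{xy}\otimes\delta_y$ as in the paper or work with the transposed magic unitary throughout, so fix the indexing consistently. Also, since $C_0(X)$ is nonunital for infinite $X$, the phrase ``$\gamma(1)=1$'' should be replaced by nondegeneracy of $\gamma$ (equivalently, the density condition in the definition of an action), which is what actually yields the corresponding row/column sum.
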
 

\begin{proof} 
If we write $ e_x \in C_0(X) $ for the characteristic function based at $ x \in X $ then the formula $ \beta(e_x) = \sum_{y \in X} u_{xy} \otimes e_y $ 
determines a nondegenerate $ * $-homomorphism  $ \beta: C_0(X) \rightarrow M(C_0(\Sym^+(X)) \otimes C_0(X)) $, 
and it is straightforward to check that this yields an action of $ \Sym^+(X) $ on $ B = C_0(X) $. 

Now assume that $ \gamma: B \rightarrow M(C_0(H) \otimes B) $ is an action of a discrete quantum group $ H $ on $ B $. Then we can 
write $ \gamma(e_x) = \sum_{y \in X} v_{xy} \otimes e_y $ for uniquely determined elements $ v_{xy} \in M(C_0(H)) $. We claim that the elements $ v_{xy} $, 
or rather their components in each matrix block inside $ C_0(H) $, satisfy the defining properties for a quantum permutation. First note that 
from $ e_x^2 = e_x = e_x^* $ we get $ v_{xy}^2 = v_{xy} = v_{xy}^* $ for all $ x,y \in X $. 
Since $ \gamma $ is nondegenerate we have $ \sum_{x \in X} v_{xy} = 1 $ for all $ y \in X $. 
Moreover, we obtain the relation $ \sum_{y \in X} v_{xy} = 1 $ for all $ x \in X $ from the fact that the canonical weight on $ B $ is required to be 
invariant with respect to $ \gamma $. 

Since $ C_0(H) $ is a direct sum of matrix algebras this allows us to define a nondegenerate $ * $-homomorphism $ \iota^*: C_0(\Sym^+(X)) \rightarrow M(C_0(H)) $ 
such that $ \iota^*(u_{xy}) = v_{xy} $ for all $ x, y \in X $. It is straightforward to check that $ \iota^* $ is compatible with the comultiplications 
and satisfies $ (\iota^* \otimes \id) \beta = \gamma $. Moreover $ \iota^* $ is uniquely determined by these properties. 
\end{proof}

\section{Finitary quantum permutations} \label{seclocallyfinite}

In this section we study the quantum subgroup of the quantum permutation group $ \Sym^+(X) $ given by all finitary quantum permutations of the set $ X $. 

Classically, one obtains a subgroup $ \Sigma(X) \subset \Sym(X) $ by considering all finitary permutations, that is, permutations which move only finitely many 
points of $ X $. Equivalently, one can view $ \Sigma(X) = \varinjlim_{F \subset X} \Sigma(F) $ as the direct limit of the permutation groups $ \Sym(F) = \Sigma(F) $ 
taken over the finite subsets $ F \subset X $. 

This translates easily to $ \Sym^+(X) $. More precisely, consider the full subcategory of the $ C^* $-tensor category $ \SSym^+(X) $ formed by all quantum 
permutations $ \sigma = (\HH_\sigma, u^\sigma) $ for which there exists a finite set $ F \subset X $ such that $ u^\sigma_{xy} \neq \delta_{xy} $ only for $ x,y \in F $. 
In this case we say that $ \sigma $ \emph{moves only finitely many points}, or is \emph{finitary}. It is straightforward to check that the collection of all finitary quantum 
permutations forms a $ C^* $-tensor subcategory of $ \SSym^+(X) $. 

\begin{definition} 
Let $ X $ be a set. The finitary quantum permutation group of $ X $ is the discrete quantum group $ \Sigma^+(X) $ obtained from the rigid $ C^* $-tensor category 
of all finite dimensional finitary quantum permutations of $ X $ together with its tautological fiber functor via Tannaka-Krein reconstruction. 
\end{definition} 
 
We clearly have $ \Sym^+(X) = \Sigma^+(X) $ iff $ X $ is finite. 
In the same way as in the classical case one can write $ \Sigma^+(X) = \varinjlim_{F \subset X} \Sigma^+(F) $ as direct limit of the quantum permutation groups of the finite 
subsets $ F \subset X $ in general. For many purposes, this reduces the study of finitary quantum permutations to the case of finite sets.

Let us therefore study the quantum permutation group $ \Sym^+(X) = \Sigma^+(X) $ of a finite set $ X $ in more detail. We consider $ {\bf n} = \{1, \dots, n \} $ 
for $ n \in \mathbb{N} $ and write $ \Sigma_n^+ $ instead of $ \Sigma^+({\bf n}) $ for the associated quantum group. Note that we obtain a canonical 
isomorphism $ S_n^+ \cong \Sigma_n^+ $ for $ n = 1,2,3 $ since there are no non-classical quantum permutations for these values of $ n $. 

For $ n > 3 $ the quantum groups $ S_n^+ $ and $ \Sigma_n^+ $ are no longer isomorphic. 
By construction, the $ C^* $-tensor category of unitary corepresentations of $ \Sigma_n^+ $ identifies with the $ C^* $-tensor category $ \Rep(C^\full(S_n^+)) $ of finite 
dimensional unital $ * $-representations of $ C^\full(S_n^+) $. We may interpret this as saying that $ \Sigma_n^+ $ is the \emph{discretisation} of $ S_n^+ $ in the following sense. 

\begin{definition} \label{defdiscretisation}
Let $ G $ be a compact quantum group. The discretisation of $ G $ is the discrete quantum group $ G_\delta $ associated to the rigid $ C^* $-tensor 
category $ \Rep(C^\full(G)) $ of finite dimensional unital $ * $-representations of $ C^\full(G) $ via Tannaka-Krein reconstruction. 
\end{definition} 

Here the tensor structure on $ \Rep(C^\full(G)) $ is given by $ \pi \tp \eta = (\pi \otimes \eta) \Delta $, 
where $ \Delta: C^\full(G) \rightarrow C^\full(G) \otimes C^\full(G) $ is the comultiplication. The contragredient $ \overline{\pi} $ of a 
representation $ \pi: C^\full(G) \rightarrow B(\HH) $ in $ \Rep(C^\full(G)) $ is the representation on the conjugate Hilbert space $ \overline{\HH} $ 
given by $ \overline{\pi}(f)(\overline{\xi}) = \overline{\pi(R(f^*))(\xi)} $ where $ R $ is the unitary antipode of $ C^\full(G) $. 
We note that finite dimensional $ * $-representations of $ C^\full(G) $ factor through the Kac quotient, see \cite{SOLTAN_bohr}. 

The dual of the discretisation $ G_\delta $ of a compact quantum group $ G $ in the sense of Definition \ref{defdiscretisation} identifies with  
the Bohr compactification \cite{SOLTAN_bohr} of the dual discrete quantum group $ \hat{G} $. 
That is, discretisation in the sense of Definition \ref{defdiscretisation} is in fact nothing new, and the only difference to \cite{SOLTAN_compactifications}, \cite{SOLTAN_bohr} 
is that we focus our attention on the dual side. We note that the discretisation procedure is functorial, that is, if $ \iota: H \rightarrow G $ 
is a morphism of compact quantum groups then there is an induced morphism $ \iota_\delta: H_\delta \rightarrow G_\delta $ 
between the discretisations. 

There is a canonical morphism of locally compact quantum groups $ G_\delta \rightarrow G $, implemented by the 
unital $ * $-homomorphism $ C^\full(G) \rightarrow M(C_0(G_\delta)) $ 
obtained by considering the direct sum of all irreducible finite dimensional $ * $-representations of $ C^\full(G) $ up to equivalence. If $ G $ is a classical compact group 
then the discretisation of $ G $ in the sense of Definition \ref{defdiscretisation} is given by the group $ G $ with the discrete topology, compare \cite{SOLTAN_bohr}. 

Let $ X = {\bf n} $ for some $ n \in \mathbb{N} $ as above, and assume that $ \gamma: C(X) \rightarrow C(X) \otimes C^\red(G) $ is an action of a compact quantum group $ G $ on $ X $. 
Note that the image of $ \gamma $ is automatically contained in $ C(X) \otimes \Poly(G) $, so that it does not really matter whether we consider full or reduced function algebras here. 
We obtain an induced action $ \gamma_\delta: C(X) \rightarrow M(C(X) \otimes C_0(G_\delta)) $ via the canonical morphism $ G_\delta \rightarrow G $. 
It is easy to check that the corresponding morphism $ G_\delta \rightarrow \Sym^+(X) $ in Proposition \ref{quantumsymmetryuniversal} is nothing but the discretisation of the 
morphism $ G \rightarrow S_n^+ $ obtained from the universal property of $ S_n^+ $. 

In general, passage to the discretisation leads to a significant loss of information. However, this is not quite the case for 
quantum permutation groups. In fact, according to work of Brannan-Chirvasitu-Freslon \cite{BRANNAN_CHIRVASITU_FRESLON_matrixmodels} the algebra $ \Poly(S_n^+) $ of polynomial 
functions on $ S_n^+ $ is residually finite dimensional. 
One can rephrase this as saying that the canonical $ * $-homomorphism $ \Poly(S_n^+) \rightarrow M(C_0(\Sigma_n^+)) $ is injective. 

Let us exhibit some basic properties of the discrete quantum groups $ \Sigma_n^+ $, starting with the simplest nontrivial 
case $ n = 4 $. Recall from \cite{BBpauli}, \cite{BBfourpoints} that $ S_4^+ \cong SO_{-1}(3) $, where $ C(SO_{-1}(3)) $ is the universal $ C^* $-algebra 
generated by elements $ v_{ij} $ for $ 1 \leq i,j \leq 3 $ such that $ (v_{ij}) $ is an orthogonal matrix satisfying the relations 
\begin{align*}
v_{ij} v_{ik} = -v_{ik} v_{ij} &\text{ and } v_{ji} v_{ki} = -v_{ki} v_{ji} \text{ if } j \neq k \\
v_{ij} v_{kl} = v_{kl} v_{ij} &\text{ if } i\neq k \text{ and } j \neq l \\
\sum_{\sigma \in S_3} v_{1 \sigma(1)} &v_{2 \sigma(2)} v_{3 \sigma(3)} = 1. 
\end{align*}
The comultiplication of $ C(SO_{-1}(3)) $ is given by $ \Delta(v_{ij}) = \sum_k v_{ik} \otimes v_{kj} $. 

There is an injective $ * $-homomorphism $ \theta: C(SO_{-1}(3)) \rightarrow M_4(C(SO(3)) $ given by 
$$ 
\theta(v_{ij}) = t_i \otimes t_j \otimes x_{ij} 
$$ 
on the generators, where $ x_{ij} \in C(SO(3)) $ are the coordinate functions and 
$$
t_1 = 
\begin{pmatrix} 
i & 0 \\
0 & - i
\end{pmatrix},
\qquad 
t_2 = 
\begin{pmatrix} 
0 & 1 \\
-1 & 0
\end{pmatrix}, 
\qquad 
t_3 = 
\begin{pmatrix} 
0 & -i \\
-i & 0
\end{pmatrix}
$$
in $ M_2(\mathbb{C}) $ are the Pauli matrices. 

We shall show that $ \Sigma_4^+ $ is non-amenable, essentially by using the above relationship between $ SO_{-1}(3) $ and $ SO(3) $, and 
the fact that the latter contains free subgroups. However, in order to make this precise we need some preparations. 

For $ g \in SO(3) $ let us write $ \pi_g: C(S_4^+) \rightarrow M_4(\mathbb{C}) $ for the $ * $-representation obtained by composing the 
isomorphism $ C(S_4^+) \cong C(SO_{-1}(3)) $ from \cite{BBfourpoints} with the map $ \theta $, followed by evaluation at $ g $. 
By slight abuse of notation we will also view $ \pi_g $ as a representation of $ C(SO_{-1}(3)) $ in the sequel. 
For every irreducible $ * $-representation $ \rho $ of $ C(S_4^+) $ there exists an element $ g \in SO(3) $ such that $ \rho $ 
is the restriction of the evaluation representation $ \pi_g $ to an invariant subspace of $ \mathbb{C}^4 $. 
We shall say that $ \rho $ is a \emph{representation over $ g $} in this case. 

For instance, the evaluation representation $ \pi_e $ at the identity element $ e $ decomposes 
as a direct sum $ \pi_e \cong \chi_0 \oplus \chi_1 \oplus \chi_2 \oplus \chi_3 $ of four distinct characters, such that $ \chi_0 = \epsilon $ is the counit 
and $ \chi_k $ for $ 1 \leq k \leq 3 $ is determined by sending $ v = (v_{ij}) $ to the element $ d_k \in SO(3) $ given by reflection about the $ k $-th coordinate axis. 
We may thus identify these characters with the elements of the Klein subgroup $ D \subset SO(3) $ of diagonal matrices in a natural way, such that the counit corresponds 
to the identity matrix $ d_0 \in D $. 

Consider the action of $ D \times D $ on $ SO(3) $ given by left and right multiplication, so that $ (c,d) \cdot g = c g d^{-1} $ for $ c,d \in D $ and $ g \in SO(3) $, 
and let us write $ [g] \subset SO(3) $ for the orbit of $ g \in SO(3) $ under this action. 

\begin{lemma} \label{isomorphismlemma} 
Let $ g, h \in SO(3) $. If $ [g] = [h] $ then the representations $ \pi_g $ and $ \pi_h $ are equivalent. 
\end{lemma}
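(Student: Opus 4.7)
The plan is to produce an explicit unitary intertwiner. By hypothesis there exist $c = d_a$ and $d = d_b$ in the Klein subgroup $D$ with $h = c g d^{-1} = d_a g d_b$ (using $d_b^{-1} = d_b$). Since $d_a, d_b$ are diagonal, this unfolds componentwise to
\[
h_{ij} = \varepsilon^a_i \varepsilon^b_j \, g_{ij}, \qquad \text{where } \varepsilon^a_i := (d_a)_{ii} \in \{\pm 1\}.
\]
Using the convention $t_0 := I$, I would show that the unitary $U := t_a \otimes t_b$ (viewed as an element of $U(4) \cong U(\mathbb{C}^2 \otimes \mathbb{C}^2)$) intertwines $\pi_g$ and $\pi_h$, i.e.\ $U \pi_g(v_{ij}) U^* = \pi_h(v_{ij})$ for all $i,j$.

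Unwinding the definition $\pi_g(v_{ij}) = g_{ij}\, (t_i \otimes t_j)$, the desired equality reduces, after factoring out the scalar $g_{ij}$, to the pair of identities
\[
t_a t_i t_a^{-1} = \varepsilon^a_i t_i \quad \text{and} \quad t_b t_j t_b^{-1} = \varepsilon^b_j t_j.
\]
Each of these is a routine case analysis using the quaternion-like relations $t_i^2 = -I$, $t_i t_j = -t_j t_i$ for $i \neq j$, which are verified directly from the Pauli-matrix formulas for $t_1, t_2, t_3$. Indeed, $t_i$ commutes with itself and anticommutes with the other $t_j$, so conjugation by $t_a$ fixes $t_a$ and negates $t_i$ for $i \neq a$, which is exactly the sign pattern $\varepsilon^a_i$. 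One also verifies in passing that each $t_a$ is unitary (since $t_a^* = -t_a$ and $t_a^2 = -I$), so $U$ is a genuine element of $U(4)$.

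I do not anticipate a serious obstacle here: the whole statement amounts to observing that the assignment $d_a \mapsto t_a$ lifts the $D$-action by left/right multiplication on $SO(3)$ to the adjoint action on the algebra spanned by the $t_i \otimes t_j$, which by construction is where the image of $\pi_g$ lives. The one point requiring a little care is sign-tracking in the quaternion calculation, but this is purely mechanical and independent of $g$.
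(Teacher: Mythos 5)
Your proposal is correct and is essentially the paper's own argument: the paper treats left multiplication by $d_i$ via the intertwiner $t_i \otimes 1$ and right multiplication via the second tensor leg, then composes, which is exactly your single unitary $t_a \otimes t_b$ with the same sign/anticommutation computation for the $t_i$. No issues.
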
 

\begin{proof} 
Let us first assume that we can write $ g = d h $ for some $ d \in D $. If $ d = e $ there is nothing to prove, so it suffices to consider the case that $ d = d_i $ for 
some $ 1 \leq i \leq 3 $, where we recall that $ d_i $ is reflection about the $ i $-th coordinate axis. It is straightforward to check that the operator $ t_i \otimes 1 $ 
defines an intertwiner between $ \pi_g $ and $ \pi_h $ in this case. 
In a similar way one can treat the case $ g = h d $ for $ d \in D $, and combining these two cases yields the claim. 
\end{proof}  

Let us say that the \emph{packet} associated to $ g \in SO(3) $ is the set $ \Pi_{[g]} \subset \Rep(C(S_4^+)) $ of all irreducible representations which can be realised as 
representations over elements of $ [g] $. 

\begin{lemma} \label{packetlemma} 
Every irreducible representation of $ C(S_4^+) $ is contained in a unique packet. Each packet $ \Pi_{[g]} $ contains at most $ 4 $ irreducible representations, 
and these can all be realised over any point of $ [g] \subset SO(3) $. 
\end{lemma}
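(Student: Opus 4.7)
The plan rests on combining the central character of $C(SO_{-1}(3))$ with Lemma \ref{isomorphismlemma} and a dimension count.

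First I would verify that $v_{ij}^2 \in C(SO_{-1}(3))$ is central for every $i,j$: the anticommutation relations among the $v_{ij}$ contribute an even number of sign flips when one commutes $v_{ij}^2$ past another generator, while the commutation relations cause no sign change at all. Applying $\theta$ then gives $\theta(v_{ij}^2) = t_i^2 \otimes t_j^2 \otimes x_{ij}^2 = 1 \otimes 1 \otimes x_{ij}^2$ since each Pauli matrix squares to $-1$. Consequently, under $\pi_g$ the central element $v_{ij}^2$ acts as the scalar $g_{ij}^2$.

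Existence of a packet containing a given irreducible $\rho$ is immediate from the definition: $\rho$ is a subrepresentation of some $\pi_g$, so $\rho \in \Pi_{[g]}$. For uniqueness, assume $\rho \in \Pi_{[g]} \cap \Pi_{[h]}$. The central element $v_{ij}^2$ then acts on $\rho$ simultaneously by $g_{ij}^2$ and by $h_{ij}^2$, forcing $g_{ij}^2 = h_{ij}^2$ for all $i,j$. Combined with $g,h \in SO(3)$, I would argue that this forces $[g] = [h]$: writing $h_{ij} = \epsilon_{ij} g_{ij}$ with signs $\epsilon_{ij} \in \{\pm 1\}$, the orthogonality relations for the rows and columns of $h$ together with those for $g$ force $\epsilon$ to be rank-one, i.e.\ $\epsilon_{ij} = r_i c_j$ with $\prod_i r_i = \prod_j c_j = 1$, which is precisely the form of a $D \times D$-action on $g$.

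For the cardinality bound, $\pi_g$ is a $4$-dimensional representation and hence has at most $4$ pairwise inequivalent irreducible subrepresentations. By Lemma \ref{isomorphismlemma} one has $\pi_{g'} \cong \pi_g$ for every $g' \in [g]$, so $\Pi_{[g]}$ coincides with the set of equivalence classes of irreducible subrepresentations of $\pi_g$. This simultaneously yields the upper bound of $4$ and the statement that every element of $\Pi_{[g]}$ can be realised over any point of $[g]$.

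The main obstacle is the sign-pattern step in the proof of uniqueness: one must show that for $g,h \in SO(3)$ with the same entrywise squares the ratio matrix $\epsilon_{ij} = h_{ij}/g_{ij}$ is necessarily of the rank-one form compatible with a $D \times D$-action. This reduces to a finite computation using that, for three generic real numbers summing to zero, only the trivial overall sign flip preserves the vanishing sum; degenerate configurations with vanishing entries of $g$ are handled by a separate elementary check that produces no additional packets.
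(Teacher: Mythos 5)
Your proposal is correct and follows essentially the same route as the paper: the paper also extracts $g_{ij}^2 = h_{ij}^2$ from the fact that $v_{ij}^2$ acts as the scalar $g_{ij}^2$ in $\pi_g$ (phrased there via $\|\rho(v_{ij}^2)\|$ rather than centrality), then deduces $g = chd$ with $c,d \in D$ by the same sign-pattern inspection you sketch, and obtains the bound of $4$ from $\dim \pi_g = 4$ together with Lemma \ref{isomorphismlemma}. Your rank-one factorisation of the sign matrix, with the determinant argument showing one may take $\prod_i r_i = \prod_j c_j = 1$, is exactly the ``direct inspection'' the paper leaves implicit.
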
 

\begin{proof} 
Assume that $ \rho \in \Rep(C(S_4^+)) $ can be written as a subrepresentation of $ \pi_g $ and $ \pi_h $ for $ g,h \in SO(3) $. 
Since the matrices $ t_i \otimes t_j $ square to the identity we obtain 
$$ 
h_{ij}^2 = \|\pi_h(v_{ij}^2)\| = \|\rho(v_{ij}^2)\| = \|\pi_g(v_{ij}^2)\| = g_{ij}^2 
$$
for all $ 1 \leq i,j \leq 3 $, or equivalently, $ |g_{ij}| = |h_{ij}| $. 
A direct inspection shows that one can then write $ g = c h d $ for suitable diagonal matrices $ c, d $ with entries in $ \pm 1 $, 
and one can in fact take both $ c, d $ from $ D $. Hence we obtain $ [g] = [h] $. 

The second part of the claim follows from Lemma \ref{isomorphismlemma} and the fact that each $ \pi_g $ contains at most $ 4 $ distinct irreducible representations. 
\end{proof} 

Next we discuss the behaviour of the map $ \theta: C(SO_{-1}(3)) \rightarrow M_4(C(SO(3)) $ with respect to the comultiplications of $ C(SO_{-1}(3)) $ and $ C(SO(3)) $. 

\begin{lemma} \label{fusionlemma} 
We have 
$$ 
\pi_g \tp \pi_h \cong \bigoplus_{d \in D} \pi_{gdh} 
$$ 
for all $ g, h \in SO(3) $. 
\end{lemma}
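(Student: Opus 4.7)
The plan is to exploit the explicit formula $\pi_g(v_{ij}) = g_{ij} \, t_i \otimes t_j$ and compute $\pi_g \tp \pi_h$ directly, then diagonalise a commuting family of Pauli-type operators to split the result.

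First I would compute
\[
(\pi_g \tp \pi_h)(v_{ij}) = (\pi_g \otimes \pi_h)\Delta(v_{ij}) = \sum_{k=1}^3 g_{ik} h_{kj}\,(t_i \otimes t_k) \otimes (t_k \otimes t_j),
\]
which lives in $M_4(\mathbb{C}) \otimes M_4(\mathbb{C}) = B((\mathbb{C}^2)^{\otimes 4})$. Swapping the middle two tensor factors by the flip $\Sigma$ on $\mathbb{C}^2 \otimes \mathbb{C}^2$, this operator is unitarily equivalent to
\[
t_i \otimes \Bigl(\sum_{k=1}^3 g_{ik} h_{kj}\, t_k \otimes t_k\Bigr) \otimes t_j \quad \in \quad M_2(\mathbb{C}) \otimes M_4(\mathbb{C}) \otimes M_2(\mathbb{C}).
\]

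The next step is to observe that the three operators $t_k \otimes t_k$ on $\mathbb{C}^2 \otimes \mathbb{C}^2$ are self-adjoint unitaries which pairwise commute: since $t_i t_j = -t_j t_i$ for $i \neq j$ among Pauli matrices, the sign squares out on the tensor product. Together with the identity they span a $4$-dimensional maximal abelian $*$-subalgebra of $M_4(\mathbb{C})$, so there is an orthonormal basis $(e_d)_{d \in D}$ of $\mathbb{C}^2 \otimes \mathbb{C}^2$, indexed by the Klein four-group $D \subset SO(3)$ of diagonal matrices, such that $(t_k \otimes t_k) e_d = d_{kk} \, e_d$. This is the one verification requiring care: one must check that the four joint eigencharacters of $\{t_k \otimes t_k\}_{k=1,2,3}$ are exactly the four assignments $k \mapsto d_{kk}$ coming from the four diagonal elements of $D$, and there are only four such sign choices with the constraint $\det d = 1$, matching the four joint eigenspaces.

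On the line $\mathbb{C} e_d$ the middle factor $\sum_k g_{ik} h_{kj}\, t_k \otimes t_k$ acts by the scalar $\sum_k g_{ik} d_{kk} h_{kj} = (g d h)_{ij}$. Consequently, decomposing $\mathbb{C}^2 \otimes \mathbb{C}^2 = \bigoplus_{d \in D} \mathbb{C} e_d$ yields a decomposition of $(\pi_g \tp \pi_h)(v_{ij})$ as a direct sum of the operators $(g d h)_{ij}\, t_i \otimes t_j = \pi_{gdh}(v_{ij})$ over $d \in D$. Since this decomposition is independent of $i,j$ and the $v_{ij}$ generate $C(SO_{-1}(3))$, we conclude $\pi_g \tp \pi_h \cong \bigoplus_{d \in D} \pi_{gdh}$, as required. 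The main subtlety, as noted, is the bookkeeping for the joint eigenbasis of the $t_k \otimes t_k$ and its identification with the character group of $D$; everything else is a direct calculation once the formula for $\theta$ and the comultiplication are unwound.
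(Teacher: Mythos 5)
Your argument is correct and is essentially the paper's proof: the same computation of $(\pi_g \tp \pi_h)(v_{ij}) = \sum_k g_{ik}h_{kj}\, t_i \otimes t_k \otimes t_k \otimes t_j$, followed by decomposing the middle two legs along the joint eigenspaces of the commuting involutions $t_k \otimes t_k$, whose four joint characters are indexed by the Klein group $D$ and turn each block into $\pi_{gdh}$. The only cosmetic difference is that the paper cites the already-established decomposition of $\pi_e$ into the four characters $\chi_i$ (whose spectral projections $p_i$ are exactly your rank-one projections onto the joint eigenvectors $e_{d}$), whereas you rederive that diagonalisation directly.
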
  

\begin{proof} 
By definition, the representation $ \pi_g \tp \pi_h = (\pi_g \otimes \pi_h) \Delta $ maps the generator $ v_{ij} $ of $ C(SO_{-1}(3)) $ to 
$$ 
\sum_k \pi_g(v_{ik}) \otimes \pi_h(v_{kj}) = \sum_k g_{ik} h_{kj} \,t_i \otimes t_k \otimes t_k \otimes t_j. 
$$
The middle two tensor factors in this expression can be decomposed in the same way as for the representation $ \pi_e $. 
More precisely, let us write $ p_i \in M_4(\mathbb{C}) $ for $ 0 \leq i \leq 3 $ for the projections onto the 
irreducible components of the representation $ \pi_e $, so that the restriction of $ \pi_e $ to $ p_i(\mathbb{C}^4) \subset \mathbb{C}^4 $ is 
given by the character $ \chi_i $ associated with $ d_i \in D $. 
Then the operators $ 1 \otimes p_i \otimes 1 $ are intertwiners of $ \pi_g \tp \pi_h $, and they implement a direct sum decomposition 
of $ \pi_g \tp \pi_h $ into the representations $ \pi_{gd_ih} $ as claimed. 
\end{proof} 

Finally, we recall the F\o lner type characterisation of amenability for discrete quantum groups from \cite{KYED_betticoamenable}. 
A countable discrete quantum group $ G $ is amenable iff for every nonempty finite subset $ S \subset \Irr(\hat{G}) $ and $ \epsilon > 0 $ there exists a finite 
set $ F \subset \Irr(\hat{G}) $ such that 
$$
\sum_{t \in \partial_S(F)} \dim(t)^2 < \epsilon \sum_{t \in F} \dim(t)^2,   
$$ 
where the boundary $ \partial_S(F) $ of $ F $ relative to $ S $ is defined by 
\begin{align*}
\partial_S(F) = \{t \in F &\mid \exists r \in S \text{ such that } \supp(tr) \not\subset F \} \\
&\cup \{t \in \Irr(\hat{G}) \setminus F \mid \exists r \in S \text{ such that } \supp(tr) \not\subset \Irr(\hat{G}) \setminus F \}.  
\end{align*} 
Here the support $ \supp(f) $ of an element $ f = \sum_{r \in \Irr(\hat{G})} \lambda_r r \in \mathbb{Z}[\Irr(\hat{G})] $ is the set of 
all $ r \in \Irr(\hat{G}) $ such that $ \lambda_r \neq 0 $. 

We are now ready to prove the following statement. 

\begin{theorem}
The discrete quantum group $ \Sigma_4^+ $ is non-amenable. 
\end{theorem}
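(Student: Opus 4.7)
The plan is to produce a countable quantum subgroup of $\Sigma_4^+$ that fails the F\o{}lner criterion recalled just above the theorem; since amenability of a discrete quantum group is detected on its countable quantum subgroups (by the discussion in Section \ref{secprelim}), this forces $\Sigma_4^+$ to be non-amenable. The underlying idea is that, via $\theta$, the non-amenability of $SO(3)_\delta$ (which contains the free group $F_2$) is transferred into the fusion calculus for $S_4^+$.

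First I would choose $g,h \in SO(3)$ generating a free subgroup of rank two, taken generically so that the countable subgroup $\tilde\Gamma := \langle D, g, h \rangle \subset SO(3)$ is (isomorphic to a closely related quotient of) the free product $D * F_2$, with $\langle g, h \rangle \cap D = \{e\}$ and distinct double cosets in $D \backslash \tilde\Gamma / D$ represented by distinct $D \times D$-orbits of $SO(3)$. Such a choice is generic. In particular $\tilde\Gamma$ is non-amenable, and since $D$ is finite the natural walk on $D \backslash \tilde\Gamma / D$ generated by the double cosets of $g^{\pm 1}$ and $h^{\pm 1}$ is still non-amenable, with a uniform isoperimetric constant $c > 0$.

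Next, let $\Lambda \subset \Corep(\Sigma_4^+)$ be the full $C^*$-tensor subcategory generated by $\pi_g$ and $\pi_h$, and let $G_\Lambda \subset \Sigma_4^+$ be the associated countable quantum subgroup. Iterating Lemma \ref{fusionlemma} (using that the contragredient of $\pi_g$ is equivalent to $\pi_{g^{-1}}$ up to a $D$-twist, via the orthogonality relation $S(v_{ij}) = v_{ji}$ of the defining matrix of $SO_{-1}(3)$), the isomorphism classes of irreducibles in $\Lambda$ are exactly the elements of the packets $\Pi_{[w]}$ for $[w] \in D \backslash \tilde\Gamma / D$. By Lemma \ref{packetlemma} every such packet contains at most four irreducibles of total dimension four, so
\begin{equation*}
4 \;\leq\; \sum_{t \in \Pi_{[w]}} \dim(t)^2 \;\leq\; 16.
\end{equation*}

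Finally I would verify the F\o{}lner failure for $G_\Lambda$. Set $S$ equal to the union of the irreducibles in $\Pi_{[g^{\pm 1}]} \cup \Pi_{[h^{\pm 1}]}$, and write any candidate finite $F \subset \Irr(\widehat{G_\Lambda})$ as the disjoint union of intersections $F \cap \Pi_{[w]}$ over a finite set $F_0 \subset D \backslash \tilde\Gamma / D$. Lemma \ref{fusionlemma} shows that fusing a nonzero $t \in F \cap \Pi_{[w]}$ with a suitable $s \in S \cap \Pi_{[\gamma]}$ for $\gamma \in \{g^{\pm 1}, h^{\pm 1}\}$ produces a nonzero contribution in each of the four packets $\Pi_{[wd\gamma]}$, $d \in D$. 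Thus every coset in $F_0$ admitting a $D$-translate $wd\gamma$ outside $F_0$ feeds an element of $\partial_S(F)$, and the lower bound $|\partial F_0| \geq c |F_0|$ combined with the uniform bounds on $\sum_{t \in \Pi_{[w]}} \dim(t)^2$ yields
\begin{equation*}
\sum_{t \in \partial_S(F)} \dim(t)^2 \;\geq\; \tfrac{c}{4} \sum_{t \in F} \dim(t)^2,
\end{equation*}
which contradicts the F\o{}lner characterisation of amenability for $G_\Lambda$.

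The principal obstacle I anticipate is the packet-level bookkeeping behind Step~2: one must check that for generic $(g,h)$ the map $D \backslash \tilde\Gamma / D \to \{\text{packets}\}$ is injective, and that the four summands appearing in each iterated application of Lemma \ref{fusionlemma} are all genuinely nonzero, so that the free-group boundary estimate survives the passage from $\tilde\Gamma$ through the packet decomposition. Once these algebraic points are pinned down, the remainder is a routine transcription of the non-amenability of $F_2$ through the F\o{}lner inequality, with the mild mismatch between dimension-squared and counting measure on packets absorbed by a bounded constant.
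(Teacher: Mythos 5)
Your proposal is correct and follows essentially the same route as the paper: both transfer the non-amenability of a free subgroup of $SO(3)$ through the Pauli-matrix model, Lemma \ref{isomorphismlemma}, Lemma \ref{packetlemma} and the fusion rule of Lemma \ref{fusionlemma}, and then contradict the F\o lner criterion for a countable quantum subgroup of $\Sigma_4^+$ by comparison with a classical non-amenable structure built from $D$ and a free group. The differences are only organisational: the paper compares F\o lner sets directly with the subgroup $K = \langle D, F_2 \rangle \subset SO(3)$ via the assignments $S \mapsto S^+$ and $F \mapsto F^-$, which makes your genericity hypotheses unnecessary (distinct double cosets automatically give distinct $D \times D$-orbits, hence distinct packets), and the point you flag --- passing from Lemma \ref{fusionlemma} to nonzero contributions of $t \tp s$ at the level of irreducibles --- is handled by the same implicit Frobenius-reciprocity step in the paper's argument, so it is a point to spell out rather than a defect of your approach.
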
 

\begin{proof} 
Let us choose a copy $ H $ of the free group on two generators inside the discretisation $ SO(3)_\delta $ of $ SO(3) $. 
Moreover let $ K \subset SO(3)_\delta $ be the subgroup generated by $ D $ and $ H $. 

Let $ U \subset \Sigma_4^+ $ be the quantum subgroup generated by all irreducible representations of $ C(S_4^+) $ over elements of $ K $. By construction, $ U $ is 
a countable discrete quantum group, and using Lemma \ref{fusionlemma} we see that $ \Irr(\hat{U}) $ consists precisely of the 
corepresentations of $ \Sigma_4^+ $ corresponding to the irreducible representations of $ C(S_4^+) $ which are contained in $ \pi_k $ for some $ k \in K $. 

For $ S \subset K $ we let 
$$ 
S^+ = \bigcup_{s \in S} \Pi_{[s]} \subset \Irr(\hat{U}), 
$$ 
and for $ T \subset \Irr(\hat{U}) $ we write 
$$ 
T^- = \{k \in K \mid \exists \eta \in T \text{ such that } \eta \in \Pi_{[k]} \} \subset K. 
$$ 
Note that if $ S $ is finite then the same is true for $ S^+ $ by Lemma \ref{packetlemma}, and similarly $ T^- $ is finite provided $ T $ is. 

Consider a nonempty finite set $ S \subset K $, a finite set $ F \subset \Irr(\hat{U}) $, and let $ x \in F^- $. Then we find an irreducible corepresentation $ \pi $ over $ x $ 
which is contained in $ F $. 
Assume that $ xs \notin F^- $ for some $ s \in S $, and let $ \sigma \in \Irr(\hat{U}) $ be over $ s $. Then $ \sigma \in S^+ $ by the definition of $ S^+ $. 
If the support of $ \pi \tp \sigma $ is entirely contained in $ F $ for all such $ \sigma $  then we get $ xs \in F^- $. This is impossible, and hence $ \pi \in \partial_{S^+}(F) $. 
Similarly, if $ x \in K \setminus F^- $ then every irreducible corepresentation $ \pi $ over $ x $ is contained in $ \Irr(\hat{U}) \setminus F $. 
Assume that $ xs \notin K \setminus F^- $ for some $ s \in S $, or equivalently $ xs \in F^- $. 
If the support of $ \pi \tp \sigma $ is entirely contained in $ \Irr(\hat{U}) \setminus F $ for all such $ \pi $ and all $ \sigma \in S^+ $ over $ s $ then we 
get $ xs \in K \setminus F^- $. This is impossible, and therefore $ \pi \in \partial_{S^+}(F) $ for some $ \pi $. 
Combining these considerations we obtain
$$
|\partial_S(F^-)| \leq 16 |\partial_{S^+}(F)|, 
$$
using that every class $ [x] \subset K $ contains at most $ 16 $ elements. 

Now assume that $ U $ is amenable. Then for every nonempty finite set $ S \subset K $ and every $ \epsilon > 0 $ we 
find a finite set $ F \subset \Irr(\hat{U}) $ such that
$$
|\partial_{S^+}(F)| \leq \sum_{t \in \partial_{S^+}(F)} \dim(t)^2 < \epsilon \sum_{t \in F} \dim(t)^2 \leq 16 \epsilon |F|, 
$$
noting that $ 1 \leq \dim(t) \leq 4 $ for every $ t \in \Irr(\hat{U}) $. Since each packet contains at most $ 4 $ irreducibles we also have $ |F| \leq 4 |F^-| $, 
and hence
$$ 
|\partial_S(F^-)| \leq 16 |\partial_{S^+}(F)| < 256 \epsilon |F| \leq 1024 \epsilon |F^-|. 
$$
This contradicts the fact that $ K $ is not amenable, and finishes the proof. 
\end{proof} 

Since $ \Sigma_4^+ $ fails to be amenable the same holds true for $ \Sigma_n^+ $ for any $ n \geq 4 $, and also for the finitary quantum 
permutation group $ \Sigma^+(X) = \varinjlim_{F \subset X} \Sigma^+(F) $ associated with an infinite set $ X $. 
Of course, this is in contrast to the situation for the corresponding classical groups. 

Note that none of the quantum groups $ \Sigma_n^+ $ for $ n \geq 4 $ and $ \Sigma^+(X) $ for an infinite set $ X $ have property (T) because they are not finitely generated. 
In fact, our above analysis shows in particular that $ \Sigma_4^+ $ is uncountable.

\section{Free wreath products} \label{secfreewreath}

Utilising infinite quantum permutation groups in the sense of Definition \ref{defsymplus}, we shall now explain a variant of the free wreath product construction 
introduced by Bichon, compare \cite{Bichonfreewreathproducts}. 

Throughout we fix a discrete quantum group $ G $ and a set $ X $. By an \emph{$ X $-free wreath corepresentation of $ G $} we mean a pair $ \pi = (\pi^G, \pi^X) $ consisting of a
family $ \pi^G = (\pi^G_x)_{x \in X} $ of nondegenerate $ * $-representations of $ C_0(G) $ on the same Hilbert space $ \HH_\pi $ together with a quantum 
permutation $ \pi^X = (\pi^X_{xy}) $ of $ X $ on $ \HH_\pi $, such that 
$$
\pi^G_x(f) \pi^X_{xy} = \pi^X_{xy} \pi^G_x(f) 
$$ 
for all $ f \in C_0(G) $ and $ x,y \in X $. 

If $ \pi = (\pi^G, \pi^X), \rho = (\rho^G, \rho^X) $ are $ X $-free wreath corepresentations of $ G $ then their tensor product is defined by 
the family of $ * $-representations $ (\pi \tp \rho)^G_x $ on $ \HH_\pi \otimes \HH_\rho $ given by 
\begin{align*}
(\pi \tp \rho)^G_x(f) = \sum_{y \in X} (\pi^X_{xy} \otimes 1)(\pi^G_x \otimes \rho^G_y)\Delta(f) = \sum_{y \in X} (\pi^G_x \otimes \rho^G_y) \Delta(f) (\pi^X_{xy} \otimes 1)
\end{align*}
and the quantum permutation $ \pi^X \tp \rho^X $. Since 
\begin{align*}
(\pi \tp \rho)^G_x(f) (\pi \tp \rho)^X_{xy}
&= \sum_{v,w \in X} (\pi^X_{xv} \otimes 1)(\pi^G_x \otimes \rho^G_v)\Delta(f) (\pi^X_{xw} \otimes \rho^X_{wy}) \\
&= \sum_{v,w \in X} (\pi^X_{xv} \pi^X_{xw} \otimes 1)(\pi^G_x \otimes \rho^G_v)\Delta(f) (1 \otimes \rho^X_{wy}) \\
&= \sum_{w \in X} (\pi^X_{xw} \otimes \rho^X_{wy})(\pi^G_x \otimes \rho^G_w)\Delta(f) \\
&= \sum_{v,w \in X} (\pi^X_{xw} \otimes \rho^X_{wy})(\pi^G_x \otimes \rho^G_v)\Delta(f) (\pi^X_{xv} \otimes 1) \\
&= (\pi \tp \rho)^X_{xy} (\pi \tp \rho)^G_x(f) 
\end{align*}
we see that $ \pi \tp \rho $ is again an $ X $-free wreath corepresentation of $ G $. 
The trivial $ X $-free wreath corepresentation $ \epsilon $ of $ G $ on the Hilbert space $ \mathbb{C} $ is given by the counit of $ C_0(G) $ for all $ x \in X $ 
together with the trivial quantum permutation. 

An intertwiner of $ X $-free wreath corepresentations $ \pi, \rho $ of $ G $ is a bounded linear operator $ T: \HH_\pi \rightarrow \HH_\rho $ such that $ T \pi^G_x = \rho^G_x T $ 
and $ T \pi^X_{xy} = \rho^X_{xy} T $ for all $ x, y \in X $. It is straightforward to check that the collection of all $ X $-free wreath corepresentations of $ G $ forms 
a $ C^* $-tensor category. 

By definition, the contragredient of an $ X $-free wreath corepresentation $ \pi = (\pi^G, \pi^X) $ of $ G $ acts on the conjugate Hilbert space of $ \HH_\pi $ and is determined  
by $ \overline{\pi} = (\overline{\pi}^G, \overline{\pi}^X) $ where 
$$
\overline{\pi}^G_x(f)(\overline{\xi}) = \sum_{y \in X} \overline{\pi^G_y(R(f^*)) \pi^X_{yx}(\xi)}
$$
and $ \overline{\pi}^X_{xy}(\overline{\xi}) = \overline{\pi^X_{yx}(\xi)} $. Here $ R $ denotes the unitary antipode of $ C_0(G) $. 
One checks that this is indeed a well-defined $ X $-free wreath corepresentation of $ G $. 

We say that an $ X $-free wreath corepresentation of $ G $ is finite dimensional if its underlying Hilbert space is, and we write $ \Corep(G) \Wr_* \SSym^+(X) $ 
for the corresponding full $ C^* $-tensor subcategory of the category of $ X $-free wreath corepresentations of $ G $. 

\begin{lemma} \label{dualisable}
Let $ G $ be an unimodular discrete quantum group and let $ X $ be a set. Then every object $ \pi $ of the $ C^* $-tensor category $ \Corep(G) \Wr_* \SSym^+(X) $ 
is dualisable, and the dual of $ \pi $ is given by the contragredient $ \overline{\pi} $. 
\end{lemma}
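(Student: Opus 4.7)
The plan is to take the duality morphisms of the underlying finite dimensional Hilbert space $ \HH_\pi $, namely
\begin{align*}
ev_\pi(\overline{\xi}\otimes\eta) &= \langle\xi,\eta\rangle, &
db_\pi(1) &= \sum_i e_i\otimes\overline{e_i}
\end{align*}
for $ (e_i) $ an orthonormal basis of $ \HH_\pi $, together with the analogous maps $ ev_{\overline{\pi}}, db_{\overline{\pi}} $ for $ \overline{\pi} $, and to verify that these are morphisms in the $ C^* $-tensor category $ \Corep(G) \Wr_* \SSym^+(X) $. Since the four zig-zag equations already hold in the category of finite dimensional Hilbert spaces, dualisability of $ \pi $ with dual $ \overline{\pi} $ reduces to checking that $ ev_\pi, db_\pi, ev_{\overline{\pi}}, db_{\overline{\pi}} $ are intertwiners of $ X $-free wreath corepresentations.

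Compatibility with the quantum permutation parts is a direct calculation. For $ ev_\pi $ one expands $ (\overline{\pi}\tp\pi)^X_{xy} = \sum_z \overline{\pi}^X_{xz}\otimes\pi^X_{zy} $, which leads to $ \sum_z \langle\pi^X_{zx}\xi,\pi^X_{zy}\eta\rangle = \sum_z \langle\xi,\pi^X_{zx}\pi^X_{zy}\eta\rangle $. Row orthogonality forces this to vanish for $ x\neq y $, and the column sum $ \sum_z \pi^X_{zx} = 1 $ yields $ \langle\xi,\eta\rangle $ when $ x = y $. This is precisely the intertwining condition with the trivial quantum permutation $ \epsilon^X_{xy} = \delta_{xy} $; the verifications for $ db_\pi, ev_{\overline{\pi}}, db_{\overline{\pi}} $ are analogous.

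The nontrivial point is compatibility with the $ G $-representations, which is where unimodularity is used. Substituting the definitions, $ (\overline{\pi}\tp\pi)^G_x(f)(\overline{\xi}\otimes\eta) $ becomes a double sum over $ (y,z) $ whose generic term contains $ \pi^X_{yx}\pi^G_z(R(f_{(1)}^*))\pi^X_{zx} $. The wreath commutation $ \pi^G_z(h)\pi^X_{zx} = \pi^X_{zx}\pi^G_z(h) $ lets one move $ \pi^X_{zx} $ past $ \pi^G_z(R(f_{(1)}^*)) $, and row orthogonality of the column projections $ (\pi^X_{z'x})_{z'} $ then kills all terms with $ z\neq y $. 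What remains is
$$
\sum_y \overline{\pi^G_y(R(f_{(1)}^*))\pi^X_{yx}(\xi)}\otimes\pi^G_y(f_{(2)})(\eta).
$$
Applying $ ev_\pi $ and using $ R(f^*)^* = R(f) $ yields $ \sum_y \langle\pi^X_{yx}\xi,\pi^G_y(R(f_{(1)})f_{(2)})\eta\rangle $. At this step unimodularity enters: it gives $ R = S $ and hence $ m(R\otimes\id)\Delta(f) = \epsilon(f) 1 $, so the sum collapses to $ \epsilon(f)\sum_y\langle\pi^X_{yx}\xi,\eta\rangle = \epsilon(f)\langle\xi,\eta\rangle $ via the column sum of $ \pi^X $. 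Since $ \epsilon^G_x = \epsilon $ this is the desired intertwining, and the remaining three maps are handled by the same mechanism.

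The hard part will be navigating the twisted shape of $ \overline{\pi}^G $, which entangles the $ G $-representations with the quantum permutation through its defining sum over $ y $. It is the combined use of wreath commutation, row/column orthogonality, and the antipode identity that collapses the double sums, and unimodularity is essential at the final step through the identification $ R = S $; without it the naive duality morphisms would need modification by the Woronowicz character of $ \hat{G} $, which is not available in this purely algebraic setting.
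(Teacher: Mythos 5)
Your proposal is correct and follows essentially the same route as the paper: take the standard evaluation and coevaluation maps of the finite dimensional Hilbert space, note the zig-zag equations hold automatically, and verify the intertwining property by combining the wreath commutation relation, row/column orthogonality of the magic unitary, and the antipode identity $S(f_{(1)})f_{(2)}=\epsilon(f)1$ in $C_c(G)$, with unimodularity entering exactly through $R=S$. Even your closing remark about the failure in the non-unimodular case mirrors the comment the paper makes right after the lemma.
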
 

\begin{proof} 
Fix a finite dimensional $ X $-free wreath corepresentation $ \pi = (\pi^G, \pi^X) $ of $ G $ and choose an orthonormal basis $ e_1, \dots, e_n $ of the underlying 
Hilbert space $ \HH_\pi $. If $ \overline{e}_1, \dots, \overline{e}_n $ denotes the dual basis of $ \HH_{\overline{\pi}} \cong \HH_\pi^* $ we obtain 
bounded linear operators $ ev_\pi: \HH_{\overline{\pi}} \otimes \HH_\pi \rightarrow \mathbb{C} $ and $ db_\pi: \mathbb{C} \rightarrow \HH_\pi \otimes \HH_{\overline{\pi}} $ by 
$$
ev_\pi(\overline{e}_i \otimes e_j) = \delta_{ij} = \bra e_i, e_j \ket, \qquad db_\pi(1) = \sum_{i = 1}^n e_i \otimes \overline{e}_i.  
$$ 
We shall work with the multiplier Hopf algebra $ C_c(G) $ of finitely supported elements in $ C_0(G) $, and use Sweedler notation $ \Delta(f) = f_{(1)} \otimes f_{(2)} $ 
for the coproduct of $ f \in C_c(G) $. Keeping in mind that any $ \eta \in \HH_\pi $ can be written in the form $ \eta = \pi^G_v(g_y)(\eta_y) $ for suitable 
elements $ \eta_y \in \HH_\pi $ and $ g_y \in C_c(G) $ for $ y \in X $, we compute 
\begin{align*}
ev_\pi (\overline{\pi} \tp \pi)^G_x(f)(\overline{\xi} \otimes \eta)
&= \sum_{y \in X} ev_\pi ((\overline{\pi}^X_{xy} \otimes 1)(\overline{\pi}^G_x \otimes \pi^G_y) \Delta(f)(\overline{\xi} \otimes \eta)) \\
&= \sum_{y,v \in X} ev_\pi ((\overline{\pi}^X_{xy} \overline{\pi}^X_{xv} \otimes 1)(\overline{\pi^G_v(S(f_{(1)})^*)(\xi)} \otimes \pi^G_y(f_{(2)})(\eta)) \\
&= \sum_{y \in X} ev_\pi (\overline{\pi_{yx} \pi^G_y(S(f_{(1)})^*)(\xi)} \otimes \pi^G_y(f_{(2)})(\eta)) \\
&= \sum_{y \in X} ev_\pi (\overline{\pi^G_y(S(f_{(1)})^*)(\pi_{yx} \xi)} \otimes \pi^G_y(f_{(2)})(\eta)) \\
&= \sum_{y \in X} \bra \pi^G_y(S(f_{(1)})^*)(\pi_{yx} \xi), \pi^G_y(f_{(2)})(\eta) \ket \\
&= \sum_{y \in X} \bra \pi_{yx} \xi, \pi^G_y(S(f_{(1)}) f_{(2)})(\eta) \ket \\
&= \sum_{y \in X} \epsilon(f) \bra \pi^X_{yx} \xi, \eta \ket \\
&= \epsilon^G_x(f) ev_\pi(\overline{\xi} \otimes \eta) 
\end{align*}
for $ \xi, \eta \in \HH_\pi $. Here we also use that the unitary antipode $ R $ agrees with the ordinary antipode $ S $ since $ G $ is unimodular. 
Similarly, using the canonical identification $ \HH_\pi \otimes \HH_{\overline{\pi}} \cong B(\HH_\pi) $ we obtain 
\begin{align*}
(\pi \tp \overline{\pi})^G_x(f) db_\pi(1)(\xi)
&= \sum_{i = 1}^n \sum_{y \in X} (\pi^X_{xy} \otimes 1)(\pi^G_x \otimes \overline{\pi}^G_y) \Delta(f)(e_i \otimes \overline{e}_i)(\xi) \\
&= \sum_{i = 1}^n \sum_{y,v \in X} (\pi^X_{xy} \pi^G_x(f_{(1)})(e_i) \otimes \overline{\pi^X_{vy} \pi^G_v(S(f_{(2)}^*))(e_i)})(\xi) \\
&= \sum_{i = 1}^n \sum_{y,v \in X} \pi^G_x(f_{(1)})(\pi^X_{xy} e_i) \bra e_i, \pi^X_{vy} \pi^G_v(S(f_{(2)}))(\xi) \ket \\
&= \sum_{i = 1}^n \sum_{y \in X} \pi^G_x(f_{(1)})(\pi^X_{xy} e_i) \bra e_i, \pi^G_x(S(f_{(2)}))(\xi) \ket \\
&= \sum_{i = 1}^n \pi^G_x(f_{(1)})(e_i) \bra e_i, \pi^G_x(S(f_{(2)}))(\xi) \ket \\
&= \sum_{i = 1}^n \epsilon(f) e_i \bra e_i, \xi \ket \\
&= db_\pi \epsilon^G_x(f)(1) (\xi)
\end{align*}
for all $ f \in C_c(G) $ and $ \xi \in \HH_\pi $. Since compatibility with the corresponding quantum permutations is obvious
we conclude that both $ ev_\pi $ and $ db_\pi $ are intertwiners. Moreover these maps clearly satisfy the zig zag equations $ (\id \otimes ev_\pi)(db_\pi \otimes \id) = \id $ 
and $ (ev_\pi \otimes \id)(\id \otimes db_\pi) = \id $. 

Swapping the roles of $ \pi $ and $ \overline{\pi} $ in the above argument we obtain the dual zig zag equations, and it follows that $ \pi $ is dualisable with 
dual $ \overline{\pi} $. 
\end{proof} 

We note that Lemma \ref{dualisable} fails if $ G $ is not unimodular, because the evaluation and dual basis morphisms for the representations $ \pi^G_x $ may depend on $ x $ 
in this case. Still we may give the following general definition. 

\begin{definition} \label{deffreewreath}
Let $ G $ be a discrete quantum group and let $ X $ be a set. The unrestricted free wreath product $ G \Wr_* \Sym^+(X) $ is the discrete quantum group associated to the 
rigid $ C^* $-tensor category of all dualisable $ X $-free wreath corepresentations of $ G $ together with its tautological fiber functor. 
\end{definition}  

Note that if $ G $ is unimodular then Lemma \ref{dualisable} shows that every finite dimensional $ X $-free wreath corepresentation of $ G $ is dualisable. 

In the same way as in the classical situation we may also define restricted free wreath products. More precisely, by a restricted $ X $-free wreath corepresentation of 
a discrete quantum group $ G $ we mean an $ X $-free wreath corepresentation $ \pi = (\pi^G, \pi^X) $ of $ G $ such that $ \pi^G_x = \epsilon $ is the counit of $ C_0(G) $ 
for all but finitely many points $ x \in X $. One checks that tensor products and contragredients of restricted $ X $-free wreath corepresentations of $ G $ 
are again restricted. 

\begin{definition} 
Let $ G $ be a discrete quantum group and let $ X $ be a set. The restricted free wreath product $ G \wwr_* \Sym^+(X) $ is the quantum subgroup of $ G \Wr_* \Sym^+(X) $ 
associated to the rigid $ C^* $-tensor category of dualisable restricted $ X $-free wreath corepresentations of $ G $ together with its tautological fiber functor. 
\end{definition}  

It follows essentially by construction that if $ G $ is a compact quantum group with discretisation $ G_\delta $ and $ X = {\bf n} = \{1,\dots, n\} $ is finite then 
there is a canonical isomorphism 
$$
(G \wr_* S_n^+)_\delta \cong G_\delta \Wr_* \Sigma_n^+ = G_\delta \wwr_* \Sigma_n^+ 
$$
of discrete quantum groups, where $ G \wr_* S_n^+ $ is the free wreath product defined by Bichon \cite{Bichonfreewreathproducts}. Here we write $ \Sigma_n^+ = \Sym^+({\bf n}) $ 
as before. 

Let us also consider an ``abelianised'' version of the free wreath product. More precisely, if $ G $ is a discrete quantum group and $ X $ a set, 
consider the full subcategory of $ X $-free wreath corepresentations $ \pi = (\pi^G, \pi^X) $ of $ G $ such that $ \pi^G_x(f) \pi_y^G(g) = \pi^G_y(g) \pi_x^G(f) $ 
for all $ x \neq y $ and $ f,g \in C_0(G) $, and with $ \pi^X $ a classical permutation of $ X $. 
We shall call such $ X $-free wreath corepresentations of $ G $ \emph{half-liberated}. 
It is straightforward to check that the class of all half-liberated $ X $-free wreath corepresentations of $ G $ is closed under taking tensor products and contragredients. 

\begin{definition} \label{deffreewreath}
Let $ G $ be a discrete quantum group and let $ X $ be a set. The unrestricted wreath product $ G \Wr \Sym(X) $ is the discrete quantum group associated to the 
rigid $ C^* $-tensor category of dualisable half-liberated $ X $-free wreath corepresentations of $ G $ and its tautological fiber functor. 
The restricted wreath product $ G \wwr \Sym(X) $ is the quantum subgroup of $ G \Wr \Sym(X) $ corresponding to the dualisable restricted half-liberated $ X $-free wreath 
corepresentations of $ G $. 
\end{definition}  

We note that the ``classical'' wreath products in Definition \ref{deffreewreath} are typically non-classical discrete quantum groups. In the case that $ X $ 
is finite, analogues of these objects in the world of compact quantum groups have recently been studied by Gromada \cite{GROMADA_quantumsymmetriescayley}.

\section{Quantum automorphisms of infinite graphs} \label{secinfinitegraphs}

In this section we extend our study of infinite quantum permutations to the case of graphs, that is, we discuss quantum symmetries of infinite graphs. 

We use the conventions and notation from section \ref{secprelim}, with the difference that we no longer require graphs to be finite. 
That is, in the sequel, by a graph $ X = (V_X, E_X) $ we mean a set $ V_X $ of vertices together with a set $ E_X \subset V_X \times V_X $ of edges such 
that $ (v,v) \notin E_X $ for all $ v \in V_X $ and $ (v,w) \in E_X $ iff $ (w,v) \in E_X $. 
We say that $ v $ and $ w $ are connected by an edge iff $ (v,w) \in E_X $, and define the degree of $ v \in V_X $ as the 
cardinality of the set of vertices which are connected to $ v $ by an edge. 
The adjacency matrix $ A_X \in M_{V_X}(\{0,1\}) $ can be viewed as a linear map $ C_c(V_X) \rightarrow C(V_X) $. 
We note that it induces a bounded linear operator $ l^2(V_X) \rightarrow l^2(V_X) $ iff $ X $ has finite degree, that is, iff the degrees of the vertices of $ X $ 
are uniformly bounded. As in section \ref{secprelim} we shall write $ \rel $ for the function on pairs of vertices which describes the adjacency relation, taking the 
values \emph{equal}, \emph{adjacent}, \emph{distinct and non-adjacent}. We denote by $ \Aut(X) $ the automorphism group of $ X $, that is, the group of all 
permutations of $ V_X $ preserving the adjacency relation. 

\begin{definition} \label{defqaut}
Let $ X = (V_X, E_X) $ be a graph. A quantum automorphism of $ X $ is a quantum permutation $ \sigma = (\HH_\sigma, u^\sigma) $ of $ V_X $ such that 
$$ 
u^\sigma_{x_1y_1} u^\sigma_{x_2y_2} = 0 
$$ 
if $ \rel(x_1,x_2) \neq \rel(y_1,y_2) $. 
\end{definition} 

It is straightforward to check that the class of quantum automorphisms in the sense of Definition \ref{defqaut} is closed under taking tensor products and contragredients, 
and thus defines a full $ C^* $-tensor subcategory of the category of quantum permutations of the underlying vertex set.  
In particular, the collection of all finite dimensional quantum automorphisms yields a rigid $ C^* $-tensor category, which leads us to the following definition. 

\begin{definition} 
Let $ X = (V_X, E_X) $ be a graph. The quantum automorphism group $ \Qut_\delta(X) $ is the quantum subgroup of $ \Sym^+(V_X) $ corresponding to the rigid $ C^* $-tensor 
category of finite dimensional quantum automorphisms of $ X $. 
\end{definition} 

Here $ \Qut_\delta(X) $ is shorthand for \emph{discrete quantum automorphism group of} $ X $, emphasizing that this is not the same as the quantum 
automorphism group of $ X $ in the sense of Banica-Bichon if $ X $ is a finite graph. 

It is easy to check that a quantum automorphism of a graph $ X $ is the same thing as a quantum permutation $ \sigma = (\HH, u) $ 
of $ V_X $ such that $ A_X u = u A_X $ as matrices in $ M_{V_X}(B(\HH)) $. Note here that the entries of both matrix products in this formula 
make sense in the strong operator topology. 

The $ C^* $-algebra $ C_0(\Qut_\delta(X)) $ and its comultiplication can be described in analogy to the discussion in section \ref{secinfiniteqpg},  
and one verifies the following basic fact in the same way as Lemma \ref{onedim}.  

\begin{lemma} 
Unitary equivalence classes of one-dimensional quantum automorphisms of a graph $ X $ correspond bijectively to graph automorphisms of $ X $. 
\end{lemma}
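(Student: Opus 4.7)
The plan is to bootstrap from Lemma \ref{onedim} and simply track what the additional graph-theoretic constraint in Definition \ref{defqaut} says for one-dimensional quantum permutations. Since a one-dimensional quantum automorphism is in particular a one-dimensional quantum permutation, Lemma \ref{onedim} already provides a bijection between unitary equivalence classes of one-dimensional quantum permutations of $V_X$ and elements of $\Sym(V_X)$, so the only task is to identify which permutations correspond to quantum automorphisms.

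First I would recall the explicit form of the bijection from Lemma \ref{onedim}: for $\sigma \in \Sym(V_X)$ the associated one-dimensional quantum permutation $Q_\sigma = (\mathbb{C}, u^\sigma)$ has $u^\sigma_{xy} = \delta_{x\,\sigma(y)} \in \mathbb{C}$, and conversely every one-dimensional quantum permutation arises this way. I would then substitute this into the defining relation of Definition \ref{defqaut}: for all $x_1, x_2, y_1, y_2 \in V_X$ with $\rel(x_1, x_2) \neq \rel(y_1, y_2)$, one requires
\begin{equation*}
u^\sigma_{x_1 y_1} u^\sigma_{x_2 y_2} = \delta_{x_1\,\sigma(y_1)} \delta_{x_2\,\sigma(y_2)} = 0.
\end{equation*}

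The product is nonzero precisely when $x_1 = \sigma(y_1)$ and $x_2 = \sigma(y_2)$. Hence the quantum automorphism condition for $Q_\sigma$ is equivalent to the implication that for all $y_1, y_2 \in V_X$, $\rel(\sigma(y_1), \sigma(y_2)) = \rel(y_1, y_2)$, which is exactly the statement that $\sigma$ preserves the adjacency relation, i.e.\ $\sigma \in \Aut(X)$. Conversely, if $\sigma \in \Aut(X)$ then the above products vanish on the relevant tuples, so $Q_\sigma$ is a quantum automorphism.

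Combining these observations, the bijection of Lemma \ref{onedim} restricts to a bijection between unitary equivalence classes of one-dimensional quantum automorphisms of $X$ and the subgroup $\Aut(X) \subset \Sym(V_X)$. There is no real obstacle here; the content is purely unwinding definitions, and the only care needed is the standard check that both directions of the correspondence are compatible with unitary equivalence, which is immediate since intertwiners between one-dimensional objects are scalars.
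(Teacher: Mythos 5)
Your proof is correct and matches the paper's intent: the paper gives no separate argument, stating only that the lemma is verified ``in the same way as Lemma \ref{onedim}'', and your unwinding of the bijection $u^\sigma_{xy} = \delta_{x\,\sigma(y)}$ against the relation in Definition \ref{defqaut} is exactly that verification. Nothing is missing.
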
 

We also note that if $ X $ is a finite graph then $ \Qut_\delta(X) $ can be identified with the discretisation of the compact quantum automorphism group $ \Qut(X) $ discussed 
in section \ref{secprelim}. As already indicated above, this means in particular that $ C_0(\Qut_\delta(X)) $ is typically not isomorphic to $ C^\full(\Qut(X)) $.  

In the remainder of this section we shall discuss some examples regarding the existence and non-existence of quantum automorphisms of graphs. Following Banica and Bichon,
we say that a graph \emph{$ X $ has no quantum symmetry} if every irreducible quantum automorphism of $ X $ is one-dimensional. 
This is the case iff the entries $ u_{xy} $ of every quantum automorphism $ \sigma = (\HH, u) $ of $ X $ pairwise commute. 
Otherwise we say that \emph{$ X $ has quantum symmetry}. 

Let us first observe that this terminology is consistent with previous usage in the case of finite graphs. 

\begin{lemma} 
Let $ X $ be a finite graph. Then $ X $ has quantum symmetry in the sense above iff $ \Qut(X) $ is a non-classical compact quantum group. 
\end{lemma}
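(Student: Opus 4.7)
The plan is to identify quantum automorphisms of the finite graph $ X $ with $ * $-representations of $ C^\full(\Qut(X)) $, and then reduce the claim to the standard fact that a $ C^* $-algebra is commutative if and only if all of its irreducible $ * $-representations are one-dimensional.

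First I would spell out the correspondence. Because $ X $ is finite, the equivalent reformulation of $ u A_X = A_X u $ as $ u_{vw} u_{xy} = 0 $ whenever $ \rel(v,x) \neq \rel(w,y) $ (recorded after Definition \ref{defqut}) together with the universal property of $ C^\full(\Qut(X)) $ gives a bijection between quantum automorphisms $ \sigma = (\HH_\sigma, u^\sigma) $ of $ X $ in the sense of Definition \ref{defqaut} and unital $ * $-representations $ \pi_\sigma: C^\full(\Qut(X)) \to B(\HH_\sigma) $, determined by $ \pi_\sigma(u_{xy}) = u^\sigma_{xy} $. This bijection carries intertwiners to intertwiners, so it preserves both irreducibility and dimension.

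For the implication ``$ X $ has quantum symmetry $ \Rightarrow $ $ \Qut(X) $ is non-classical'' I would take an irreducible quantum automorphism $ \sigma $ with $ \dim \HH_\sigma > 1 $ and pass to $ \pi_\sigma $. Its image acts irreducibly on $ \HH_\sigma $ and therefore cannot be commutative, since any commutative $ * $-subalgebra of $ B(\HH_\sigma) $ is simultaneously diagonalisable and thus acts reducibly as soon as $ \dim \HH_\sigma > 1 $. Hence $ C^\full(\Qut(X)) $ itself is non-commutative and $ \Qut(X) $ is non-classical.

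For the converse I would suppose $ C^\full(\Qut(X)) $ is non-commutative and pick $ a,b $ with $ c := ab - ba \neq 0 $. By Gelfand--Naimark some state evaluates $ c^* c $ nontrivially, and by Krein--Milman a pure state $ \phi $ can be chosen with $ \phi(c^* c) > 0 $. The associated GNS representation $ \pi_\phi $ is irreducible and satisfies $ \pi_\phi(c) \neq 0 $, so its image is non-commutative and in particular $ \dim \pi_\phi > 1 $ (since every one-dimensional $ * $-representation is a character and factors through the abelianisation). Translating back through the correspondence yields an irreducible quantum automorphism of $ X $ of dimension greater than one, showing that $ X $ has quantum symmetry. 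The only point requiring any care is the correspondence in the first step, but this is immediate from the universal property of $ C^\full(\Qut(X)) $, so I do not anticipate any real obstacle.
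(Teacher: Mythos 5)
Your proposal is correct and follows essentially the same route as the paper: identify quantum automorphisms of the finite graph $X$ with unital $*$-representations of $C^\full(\Qut(X))$, and then invoke the standard fact that a $C^*$-algebra is commutative iff all its irreducible $*$-representations are one-dimensional. The paper states these $C^*$-algebraic facts without the GNS/pure-state details you supply, but the argument is the same.
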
 

\begin{proof} 
Note that $ \Qut(X) $ is non-classical iff $ C^\full(\Qut(X)) $ is nonabelian. By definition, existence of an irreducible quantum automorphism of $ X $ of dimension greater 
than one means that $ C^\full(\Qut(X)) $ is nonabelian. Conversely, if $ C^\full(\Qut(X)) $ is nonabelian then this $ C^* $-algebra admits an irreducible $ * $-representation 
of dimension greater than one, which means that $ X $ has quantum symmmetry.  
\end{proof}

\subsection{Infinite Johnson graphs} 

In his work on quantum symmetries, Schmidt has exhibited a number of criteria which allow one to check for the existence and non-existence of quantum 
automorphisms \cite{SCHMIDT_foldedcube}, \cite{SCHMIDT_distancetransitive}. Many of these criteria carry over to the case of infinite graphs in 
a straightforward way. 

Let us state one general result of this type in order to illustrate the situation. For vertices $ x,y $ in a graph $ X $ denote by $ d(x,y) $ the distance 
between $ x $ and $ y $, that is, the length of a shortest path connecting them. Here $ d(x,y) = \infty $ if there is no such path. 

\begin{lemma} \label{distancelemma}
Let $ X $ be a graph and let $ x_1, x_2, y_1, y_2 \in V_X $ such that $ d(x_1, x_2) \neq d(y_1, y_2) $. If $ (\HH, u) $ is a quantum automorphism of $ X $ 
then $ u_{x_1, y_1} u_{x_2 y_2} = 0 $. 
\end{lemma}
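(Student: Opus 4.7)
The plan is to prove the contrapositive: if $u_{x_1 y_1} u_{x_2 y_2} \neq 0$, then $d(x_1, x_2) = d(y_1, y_2)$. The strategy is induction on the distance, combined with a symmetry argument exploiting that the transpose of a magic unitary is again a magic unitary.

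First I would establish the following one-sided statement by induction on $k \in \mathbb{N}$: if $u_{x_1 y_1} u_{x_2 y_2} \neq 0$ and $d(x_1, x_2) \leq k$, then $d(y_1, y_2) \leq d(x_1, x_2)$. The base cases $k = 0$ and $k = 1$ are exactly the defining property of quantum automorphisms from Definition \ref{defqaut}, since $d(x_1, x_2) \in \{0, 1\}$ captures precisely the $\rel$-values \emph{equal} and \emph{adjacent}. For the inductive step with $d(x_1, x_2) = k \geq 2$, pick a shortest path $x_1 = z_0, z_1, \dots, z_k = x_2$ and insert a partition of unity in the middle:
\begin{equation*}
u_{x_1 y_1} u_{x_2 y_2} = \sum_{w \in V_X} u_{x_1 y_1}\, u_{z_1 w}\, u_{x_2 y_2}.
\end{equation*}
If the left-hand side is nonzero, some term $u_{x_1 y_1} u_{z_1 w} u_{x_2 y_2}$ is nonzero. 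Then $u_{x_1 y_1} u_{z_1 w} \neq 0$ and $d(x_1, z_1) = 1$ force $d(y_1, w) \leq 1$ by the base case, while $u_{z_1 w} u_{x_2 y_2} \neq 0$ and $d(z_1, x_2) = k - 1$ force $d(w, y_2) \leq k - 1$ by the inductive hypothesis. The triangle inequality then gives $d(y_1, y_2) \leq k$.

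For the converse inequality I would invoke symmetry. Observe that if $(u_{xy})_{x,y \in V_X}$ is a quantum automorphism of $X$, then so is the transpose $(v_{xy})$ defined by $v_{xy} = u_{yx}$: the magic unitary axioms are symmetric in rows and columns, and the relation $v_{x_1 y_1} v_{x_2 y_2} = u_{y_1 x_1} u_{y_2 x_2}$ vanishes when $\rel(x_1, x_2) \neq \rel(y_1, y_2)$ because $\rel$ is symmetric. Applying the inductive statement to $v$ yields: whenever $u_{x_1 y_1} u_{x_2 y_2} \neq 0$ and $d(y_1, y_2) < \infty$, one has $d(x_1, x_2) \leq d(y_1, y_2)$.

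Combining the two one-sided bounds, if both distances are finite then they coincide. If exactly one is infinite, say $d(x_1, x_2) < \infty = d(y_1, y_2)$, then the forward bound yields $d(y_1, y_2) \leq d(x_1, x_2) < \infty$, a contradiction, so $u_{x_1 y_1} u_{x_2 y_2} = 0$; the remaining case is symmetric. This exhausts the possibilities allowed by $d(x_1, x_2) \neq d(y_1, y_2)$ and completes the proof. The only mild subtlety is the bookkeeping with infinite distances, which the symmetry argument handles uniformly.
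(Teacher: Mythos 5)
Your proof is correct. The paper does not write out an argument at all: it simply asserts that the finite-graph proof of Lemma 3.2 in Schmidt's paper on distance-transitive graphs carries over, adding only the remark that all operators involved are uniformly bounded and that multiplication is jointly strongly continuous on bounded sets; that finite-graph argument is the standard one based on the commutation of the magic unitary with powers of the adjacency matrix (walk counting), which in the infinite setting requires exactly the convergence caveats the paper alludes to, and for graphs of infinite degree the entries of $A_X^s$ can even be infinite, so some reinterpretation is needed. Your route is self-contained and sidesteps these issues: you induct along a shortest path, inserting a single row partition of unity $\sum_w u_{z_1 w} = 1$ at each step, which only requires multiplying a strongly convergent sum of pairwise orthogonal projections by \emph{fixed} bounded operators on the left and right (separate strong continuity suffices, a point you use implicitly and could state in one line). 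Two features of your write-up are genuinely necessary additions in the infinite setting and are handled correctly: the reverse inequality, obtained by observing that the transposed family $v_{xy} = u_{yx}$ is again a quantum automorphism (this is in effect the contragredient/antipode symmetry), and the bookkeeping for infinite distances, where your one-sided bound immediately rules out a nonzero product when exactly one of the two distances is finite. So the two approaches prove the same statement, the paper's by citation to the adjacency-matrix argument, yours by a path-insertion induction that is arguably better adapted to graphs of unbounded or infinite degree at the cost of a slightly longer argument.
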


The proof of Lemma \ref{distancelemma} is the same as for Lemma 3.2 in \cite{SCHMIDT_distancetransitive}, simply note that all operators in the argument 
are uniformly bounded and that multiplication is jointly strongly continuous on bounded sets. 

In order to give a concrete example of an infinite graph without quantum symmetry let us consider infinite Johnson graphs. The \emph{Johnson graph} $ J(\infty,k) $ is the 
graph with vertices given by all $ k $-element subsets of $ \mathbb{N} $, such that two vertices are connected by an edge iff their intersection contains $ k - 1 $ elements, 
compare \cite{PANKOV_infinitejohnson}. The Johnson graph $ J(\infty,k) $ has diameter $ k $ and is distance transitive. 

\begin{prop} \label{johnson}
The Johnson graph $ J(\infty, 2) $ has no quantum symmetry. 
\end{prop}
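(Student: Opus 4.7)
The plan is to show that every finite dimensional quantum automorphism $\sigma=(\HH,u)$ of $J(\infty,2)$ has pairwise commuting entries, and hence is classical. Since $J(\infty,2)$ has diameter $2$, Lemma \ref{distancelemma} gives $u_{AB}u_{CD}=0$ whenever $d(A,C)\neq d(B,D)$, and the cases $A=C$ or $B=D$ are handled by the magic unitary relations. Thus it suffices to analyse products when $A,C$ are both adjacent or both disjoint, and similarly for $B,D$.

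The key structural feature of $J(\infty,2)$ is the dichotomy on $3$-cliques: three pairwise adjacent vertices either form a \emph{triangle} $\{\{a,b\},\{b,c\},\{a,c\}\}$ with no common neighbour, or belong to an infinite \emph{star} $T_v=\{\{v,x\}:x\neq v\}$, in which case they have infinitely many common neighbours. I would first establish a preservation lemma: if $\{A_1,A_2,A_3\}$ is a triangle and $u_{A_1B_1}u_{A_2B_2}u_{A_3B_3}\neq 0$, then $\{B_1,B_2,B_3\}$ is also a triangle, and analogously for star-subcliques. Finite dimensionality of $\sigma$ makes the sums $\sum_M u_{MN}=1$ into finite sums, and combining these with Lemma \ref{distancelemma} allows one to translate a putative common neighbour of $\{B_1,B_2,B_3\}$ into a common neighbour of $\{A_1,A_2,A_3\}$, yielding a contradiction in the triangle-to-star case.

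From the star preservation I would extract, for each $a,v\in\mathbb{N}$, a projection $p_{av}\in B(\HH)$ representing the quantum event ``the element $a$ is mapped to $v$''. Independence of the auxiliary partners is ensured by the star structure, and the family $(p_{av})_{a,v\in\mathbb{N}}$ forms a magic unitary indexed by $\mathbb{N}$ satisfying
\[
u_{\{a,b\},\{c,d\}}=p_{ac}p_{bd}+p_{ad}p_{bc}
\]
for $a\neq b$ and $c\neq d$. Commutativity of the $p_{av}$ then follows from a squaring argument: for distinct $a,b,e$ and distinct $c,d$, the column orthogonality $u_{\{a,b\},\{c,d\}}u_{\{a,e\},\{c,d\}}=0$ together with the orthogonality relations of $(p_{av})$ yields, after conjugating by $p_{ac}$, the identity
\[
p_{ac}p_{bd}p_{ac}p_{ed}p_{ac}=0.
\]
Setting $Z_b=p_{ac}p_{bd}p_{ac}$, this shows $Z_bZ_e=0$ for $b\neq e$, while $\sum_{b\neq a}Z_b=p_{ac}$. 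Since the positive operators $Z_b$ are pairwise orthogonal, bounded by $p_{ac}$, and sum to a projection, each $Z_b$ is itself a projection; the standard fact that for projections $P,Q$ the operator $PQP$ is a projection only when $[P,Q]=0$ then gives $[p_{ac},p_{bd}]=0$, with the remaining commutation relations trivial by row and column orthogonality. Consequently all $u_{AB}$ commute.

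The main obstacle is the extraction of the magic unitary $(p_{av})$ together with the decomposition of $u$ in terms of the $p_{av}$: rigorously verifying that the $p_{av}$ are well-defined and satisfy the stated relations requires careful bookkeeping built on the star-preservation. Once this structural step is in place, both the dichotomy-preservation argument and the final squaring computation are essentially routine.
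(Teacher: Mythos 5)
There is a genuine gap, in fact two. First, your entire argument is carried by the structural claim that from a quantum automorphism $(\HH,u)$ of $J(\infty,2)$ one can extract a magic unitary $(p_{av})_{a,v\in\mathbb{N}}$ on points satisfying $u_{\{a,b\},\{c,d\}}=p_{ac}p_{bd}+p_{ad}p_{bc}$; everything after that (the $Z_b$ squaring argument, the fact that $PQP$ being a projection forces $[P,Q]=0$) is fine, but the extraction itself is only announced, and it is essentially the whole theorem. Even the ``preservation lemma'' it rests on is not routine: the defining relation $u_{AB}u_{CD}=0$ for $\rel(A,C)\neq\rel(B,D)$ only kills \emph{adjacent} factors in a product, and since the entries do not commute a priori you cannot simply insert a partition of unity $\sum_M u_{MN}=1$ next to a triple product $u_{A_1B_1}u_{A_2B_2}u_{A_3B_3}$ and conclude termwise vanishing from the relation with $A_1$ or $A_2$. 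Controlling such triple products is exactly the delicate part of Schmidt's proof for finite $J(n,2)$ (his Lemmas 3.4, 3.7, 3.8 and the five steps of Theorem 4.13), which is what the paper adapts; your plan defers precisely this bookkeeping, and it is not clear that the intermediate identity $u_{\{a,b\},\{c,d\}}=p_{ac}p_{bd}+p_{ad}p_{bc}$ can even be formulated (the right-hand side need not be self-adjoint) before some commutativity is already known, so there is a real risk of circularity.

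Second, you restrict to \emph{finite dimensional} quantum automorphisms and use finite dimensionality substantively (``finite sums'' in the preservation step). In this paper ``$X$ has no quantum symmetry'' means that the entries of \emph{every} quantum automorphism, on an arbitrary Hilbert space, pairwise commute; the paper's proof of Proposition \ref{johnson} fixes an arbitrary $(\HH,u)$ and handles the infinite sums by strong convergence (all operators involved are uniformly bounded and multiplication is jointly strongly continuous on bounded sets). The distinction is not cosmetic: for the Rado graph the paper proves exactly the finite dimensional statement (Proposition \ref{ER}) and still leaves ``does $R$ have quantum symmetry?'' open. So even if your plan were completed, as stated it would only show that $\Qut_\delta(J(\infty,2))$ is classical, not the proposition as the paper states it; to recover the full statement you would need to rerun your clique-dichotomy and extraction arguments with strongly convergent infinite sums in place of finite ones, which is plausible but again not done.
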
 

\begin{proof} 
The corresponding result for finite Johnson graphs is due to Schmidt, see Theorem 4.13 in \cite{SCHMIDT_distancetransitive}, and the argument given there carries over to the 
infinite case with minor modifications. This also involves appropriate versions of Lemma 3.4, Lemma 3.7 and Lemma 3.8 in \cite{SCHMIDT_distancetransitive}, which 
we shall however not spell out here. 

Let us fix a quantum automorphism $ (\HH,u) $ of $ J(\infty,2) $. Due to Lemma \ref{distancelemma} it suffices to show $ u_{ij} u_{kl} = u_{kl} u_{ij} $ for $ d(i,k) = d(j,l) = d $ 
for $ d = 1, 2 $ since $ J(\infty,2) $ has diameter $ 2 $. For $ d = 1 $ this means that both $ (i,k), (j,l) $ are edges, and since $ J(\infty,2) $ is distance transitive we may 
consider $ j = \{1,2\}, l = \{1,3\} $. 
The first four steps of the proof in \cite{SCHMIDT_distancetransitive} deal with the case $ d = 1 $, and the final step is devoted to the case $ d = 2 $. 

Keeping in mind the above remarks regarding results from section 3 of \cite{SCHMIDT_distancetransitive}, steps 1, 4 and 5 of this proof remain unchanged. 
For step 2, let $ p = \{1,a\} $ for $ a \geq 4 $. Then 
$$ 
u_{ij} u_{k, \{1,3\}} u_{ip} = 0 
$$ 
follows directly from 
$$
u_{ij} \bigg(u_{k \{2,a\}} + \sum_{c = 3, c \neq a}^\infty u_{k, \{1, c\}} \bigg) u_{ip} = 0 
$$
and the relation $ u_{ij} u_{k\{1,d\}} u_{ip} = u_{ij} u_{k \{1,3\}} u_{ip} $ for $ d \notin \{1,2, a\} $, by evaluating the terms on an arbitrary vector $ \xi \in \HH $. 
In the same way, step 3 is verified by noting that $ u_{ij}(u_{kl} + u_{k\{1,b\}}) u_{ip} = 0 $ for $ p = \{1,3\} $ and all $ b \geq 4 $ implies 
immediately $ u_{ij} u_{kl} u_{ip} = 0 $. 
\end{proof} 

The \emph{Kneser graph} $ K(\infty, 2) $ is the graph with vertex set given by all $ 2 $-element subsets of $ \mathbb{N} $ and an edge between two 
vertices iff the intersection of the corresponding sets is empty. In other words, $ K(\infty,2) $ is the complement of $ J(\infty,2) $. 
Since a graph has quantum symmetry iff its complement does, it follows from Proposition \ref{johnson} that $ K(\infty,2) $ has no quantum symmetry. 
We note that both $ J(\infty,2) $ and $ K(\infty,2) $ have infinite degree.

\subsection{Disjoint automorphism} 

Two automorphisms $ \sigma, \tau \in \Aut(X) $ of a graph $ X $ are called \emph{disjoint} if $ \sigma(x) \neq x \implies \tau(x) = x $ and $ \tau(x) \neq x \implies \sigma(x) = x $ 
for all $ x \in V_X $. The existence of a pair of disjoint automorphisms is sufficient for a finite graph $ X $ to have quantum symmetry, see 
Theorem 2.2 in \cite{SCHMIDT_foldedcube}. This criterion extends easily to the infinite setting. 

\begin{prop} \label{disjointautomorphisms}
Let $ X $ be a graph admitting a pair of disjoint automorphisms $ \sigma, \tau \in \Aut(X) $, and assume that $ k \in \mathbb{N} $ does not exceed the order
of neither of these automorphisms. Then $ X $ admits an irreducible quantum automorphism of dimension $ k $. In particular, if $ \Aut(X) $ contains a pair of nontrivial 
disjoint automorphisms then $ X $ has quantum symmetry. 
\end{prop}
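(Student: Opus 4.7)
The plan is to construct, following Schmidt's template for finite graphs (Theorem 2.2 in \cite{SCHMIDT_foldedcube}), an explicit irreducible $ k $-dimensional quantum automorphism of $ X $ on the Hilbert space $ \HH = \mathbb{C}^k $ directly from the data of $ \sigma $ and $ \tau $. First I would use the hypothesis $ k \leq \ord(\sigma) $ and $ k \leq \ord(\tau) $ to extract, after possibly replacing $ \sigma $ and $ \tau $ by suitable powers, a $ \sigma $-orbit $ O_\sigma = \{a_0, \ldots, a_{k-1}\} $ on which $ \sigma $ acts as a $ k $-cycle $ a_i \mapsto a_{(i+1) \bmod k} $, and analogously a $ \tau $-orbit $ O_\tau = \{b_0, \ldots, b_{k-1}\} $. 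Disjointness of $ \sigma $ and $ \tau $ forces $ O_\sigma \cap O_\tau = \emptyset $, and moreover ensures that $ \tau $ fixes every $ a_i $ while $ \sigma $ fixes every $ b_j $.

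On $ \HH $ with orthonormal basis $ (e_i) $ I would then choose two partitions of unity into rank-one projections, namely $ p_i $ the projection onto $ \mathbb{C} e_i $, together with $ q_j = V p_j V^* $ for a unitary $ V $ in sufficiently generic position that the unital $ * $-algebra generated by both families is all of $ B(\HH) = M_k(\mathbb{C}) $. Define the candidate magic unitary $ u = (u_{xy})_{x,y \in V_X} $ by
\[
u_{a_i a_j} = p_{(j - i) \bmod k}, \qquad u_{b_i b_j} = q_{(j - i) \bmod k}, \qquad u_{vv} = 1_\HH \text{ for } v \notin O_\sigma \cup O_\tau,
\]
and $ u_{xy} = 0 $ otherwise. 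The row and column sums at each vertex form a partition of unity by construction.

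The heart of the verification is the adjacency intertwining, equivalently $ u_{xy} u_{zw} = 0 $ whenever $ \rel(x,z) \neq \rel(y,w) $. Pairs inside $ O_\sigma \times O_\sigma $ (and analogously $ O_\tau \times O_\tau $) reduce to the statement that $ \sigma $ (resp.\ $ \tau $) preserves the adjacency relation. For pairs in $ O_\sigma \times O_\tau $ the crucial observation is that $ \rel $ is actually constant on this product: using that $ \sigma $ fixes every $ b_j $ and $ \tau $ fixes every $ a_i $ by disjointness,
\[
\rel(a_i, b_j) = \rel(\sigma^{-i}(a_i), \sigma^{-i}(b_j)) = \rel(a_0, b_j) = \rel(\tau^{-j}(a_0), \tau^{-j}(b_j)) = \rel(a_0, b_0),
\]
so any nonzero product $ u_{a_i a_{i'}} u_{b_j b_{j'}} $, which equals $ p_\alpha q_\beta $ for suitable indices, automatically satisfies the required relation condition; the remaining cases involving a vertex outside both orbits are handled analogously. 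Irreducibility of $ u $ is then immediate, as any intertwiner commutes with every $ u_{xy} $ and in particular with all $ p_i $ and $ q_j $, hence lies in the centre $ \mathbb{C} \cdot 1 $ of $ B(\HH) $.

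The main obstacle is the very first step, the extraction of a $ k $-cycle from the order hypothesis: if $ \sigma $ has a cycle of length at least $ k $ one restricts or passes to a suitable power directly, and otherwise a more delicate argument exploiting the multi-cycle structure of $ \sigma $ together with the lcm interpretation of the order is required. The \emph{In particular} clause is immediate from the $ k = 2 $ specialisation of the construction, since any nontrivial automorphism automatically supplies a cycle of length at least $ 2 $ to serve as $ O_\sigma $ or $ O_\tau $.
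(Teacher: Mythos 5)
Your overall strategy---two families of projections $p_i$, $q_j$ in $M_k(\mathbb{C})$ attached to powers of $\sigma$ and $\tau$ and chosen so that together they generate $M_k(\mathbb{C})$---is the right idea and close in spirit to the paper's construction, but as written there are two genuine gaps. First, the step you flag yourself is not a deferrable technicality: the hypothesis $k \leq \ord(\sigma)$ does not produce a $\sigma$-orbit on which (a power of) $\sigma$ acts as a $k$-cycle. The order is the least common multiple of the cycle lengths, so a permutation of cycle type $(2,3)$ has order $6$ but no cycle of length $\geq 4$; and passing to powers or restricting to an orbit only produces cycles whose lengths divide the existing ones (an $\ell$-cycle breaks under $\sigma^m$ into cycles of length $\ell/\gcd(\ell,m)$), so even a long cycle of length $\ell$ with $k \nmid \ell$ never yields a $k$-cycle. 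For many admissible $k$ your construction therefore cannot even start, and the ``more delicate argument'' you allude to is precisely what is missing.

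Second, and independently, setting $u_{vv} = 1$ for every vertex $v$ outside the two chosen orbits violates the relation condition whenever $\sigma$ (or $\tau$) moves vertices outside the chosen orbit: ``rotate one $\sigma$-orbit and fix everything else'' is not a graph automorphism in general. Concretely, let $\sigma = (a_0\,a_1)(v_0\,v_1)$ on a graph whose only edges are $a_0v_0$ and $a_1v_1$, and let $\tau$ swap two further isolated vertices $b_0, b_1$; with $k = 2$ and $O_\sigma = \{a_0,a_1\}$, $O_\tau = \{b_0,b_1\}$, the condition $\rel(a_0,v_0) \neq \rel(a_1,v_0)$ forces $u_{a_0a_1}u_{v_0v_0} = 0$, whereas your definition gives $u_{a_0a_1}u_{v_0v_0} = p_1 \neq 0$. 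The paper's proof avoids both problems by using all of $\sigma$ and $\tau$ at once: it sets $u_{xy} = \sum_{r=1}^{k}\delta_{x\sigma^r(y)}p_r + \sum_{s=1}^{k}\delta_{x\tau^s(y)}q_s - \delta_{xy}1$, where the $p_r$ are the minimal projections of $C(\mathbb{Z}/k\mathbb{Z})$ and the $q_s$ those of $C^*(\mathbb{Z}/k\mathbb{Z})$ acting on $\ell^2(\mathbb{Z}/k\mathbb{Z})$. Disjointness makes every entry a projection, the matrix is a sum of permutation matrices of genuine automorphisms weighted by the $p_r$ and $q_s$ and hence commutes with $A_X$ termwise, and irreducibility is obtained by choosing finitely many vertices $v_1,\dots,v_a$ (possibly from several $\sigma$-orbits, which is where only $k \leq \ord(\sigma)$ is used) so that products of entries recover each $p_r$, and similarly each $q_s$. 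Reworking your argument along these lines, while keeping your observation about the algebra generated by the $p$'s and $q$'s, closes both gaps.
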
 

\begin{proof} 
We shall give the details for the sake of definiteness. 
Let us fix $ k \in \mathbb{N} $ such that $ k \leq \min(\ord(\sigma), \ord(\tau)) $, and note that if $ \ord(\sigma) = \infty = \ord(\tau) $ this means that we can 
choose $ k $ arbitrarily. We shall construct an irreducible quantum automorphism $ \rho = (\mathbb{C}^k, u^\rho) $ of dimension $ k $ as follows. 

Let $ G = \mathbb{Z}/k\mathbb{Z} $ and consider the actions of $ C(G) $ and $ C^*(G) $ on $ \mathbb{C}^k = l^2(G) $, induced by pointwise multiplication of functions in $ C(G) $, 
and the regular representation of $ G $, respectively. We shall write $ p_1, \dots p_k $ and $ q_1 \dots, q_k $ for the images in $ M_k(\mathbb{C}) = B(l^2(G)) $ of the minimal 
projections in $ C(G) $ and $ C^*(G) $ under these representations, respectively. 
 
Define $ u^\rho = (u^\rho_{xy})_{x,y \in V_X} $ by $ u^\rho = \sum_{r = 1}^k u^{\sigma^r} p_r + \sum_{s = 1}^k u^{\tau^s} q_s - u^{\id} 1 $. That 
is, the matrix $ u^\rho_{xy} \in M_k(\mathbb{C}) $ is given by 
$$ 
u^\rho_{xy} = \sum_{r = 1}^k \delta_{x \sigma^r(y)} p_r + \sum_{s = 1}^k \delta_{x \tau^s(y)} q_s - \delta_{xy} 1
$$ 
for $ x, y \in V_X $. Since $ \sigma $ and $ \tau $ are graph automorphisms we clearly have $ u^\rho A_X = A_X u^\rho $. Let 
\begin{align*}
M_{xy} &= \{1 \leq r \leq k \mid \sigma^r(y) = x \}, \\
N_{xy} &= \{1 \leq s \leq k \mid \tau^s(y) = x \}, 
\end{align*}
and observe that 
$$ 
u^\rho_{xy} = 
\begin{cases} 
\sum_{r \in M_{xy}} p_r & \text{ if } \sigma(y) \neq y  \\
\sum_{s \in N_{xy}} q_s & \text{ if } \tau(y) \neq y  \\
\delta_{xy} 1 & \text{ if } \sigma(y) = y = \tau(y). 
\end{cases}
$$
In particular, every $ u^\rho_{xy} $ for $ x, y \in V_X $ is a projection. In addition we have 
\begin{align*}
\sum_{x \in V_x} u^\rho_{xy} &= \sum_{r = 1}^k \sum_{x \in V_X} \delta_{x \sigma^r(y)} p_r + \sum_{s = 1}^k \sum_{x \in V_X} \delta_{x \tau^s(y)} q_s - 1
= \sum_{r = 1}^k  p_r + \sum_{s = 1}^k q_s - 1 = 1,  
\end{align*}
and similarly 
\begin{align*}
\sum_{y \in V_x} u^\rho_{xy} &= \sum_{r = 1}^k \sum_{y \in V_X} \delta_{x\sigma^r(y)} p_r + \sum_{s = 1}^k \sum_{y \in V_X} \delta_{x\tau^s(y)} q_s - 1 
= \sum_{r = 1}^k p_r + \sum_{s = 1}^k q_s - 1 = 1 
\end{align*}
as required. It follows that $ \rho = (\mathbb{C}^k, u^\rho) $ is a quantum automorphism of $ X $. 

Let us now check that $ \rho $ is irreducible. If $ \ord(\sigma) = m < \infty $ then 
upon decomposing $ X $ into the orbits of $ \sigma $ we see that there are elements $ v_1, \dots, v_a \in V_X $, fixed under $ \tau $, 
such that $ (\sigma^t(v_1), \dots, \sigma^t(v_a)) = (v_1, \dots, v_a) $ for $ 0 \leq t < k $ implies $ t = 0 $. 
If $ \ord(\sigma) = \infty $ we either find an infinite orbit, or orbits of arbitrarily large finite size. 
Again this allows us to choose $ v_1, \dots, v_a \in V_X $ such that $ (\sigma^t(v_1), \dots, \sigma^t(v_a)) = (v_1, \dots, v_a) $ for $ 0 \leq t < k $ implies $ t = 0 $.
In fact, we may take $ v_1 = \cdots = v_a $ for a suitably chosen vertex in this case. 
In either case it follows that $ u^\rho_{\sigma^r(v_1) v_1} \cdots u^\rho_{\sigma^r(v_a) v_a} = p_r $. 

In the same way we find $ w_1, \dots, w_b \in V_X $ such that $ u^\rho_{\tau^r(w_1) w_1} \cdots u^\rho_{\tau^r(w_b) w_b} = q_r $. 
Since the projections $ p_i, q_j $ for $ 1 \leq i,j \leq k $ generate $ M_k(\mathbb{C}) $ this yields the claim.
\end{proof} 

Proposition \ref{disjointautomorphisms} allows one to exhibit quantum symmetries in various concrete situations.

\subsection{Disjoint unions} 

In the case of finite graphs, the quantum automorphism group of a disjoint union has been determined by Bichon in terms of free wreath products. Using the constructions 
from section \ref{secfreewreath}, we shall now show that the same result holds in the infinite setting, in analogy to Theorem 4.2 in \cite{Bichonfreewreathproducts}. 

If $ (X_i)_{i \in I} $ is a collection of graphs labelled by some index set $ I $ we shall write $ X = \bigcup_{i \in I} X_i $ for their disjoint union, so 
that $ V_X = \bigcup_{i \in I} V_{X_i} $ and $ E_X = \bigcup_{i \in I}  E_{X_i} $. We will be interested in particular in the situation that all $ X_i $ are 
equal to a fixed graph. 

\begin{theorem} \label{disjointunion}
Let $ X $ be a connected graph and let $ I $ be an index set. Then there is a canonical isomorphism 
$$
\Qut_\delta\bigg(\bigcup_{i \in I} X\bigg) \cong \Qut_\delta(X) \Wr^* \Sym^+(I) 
$$
of discrete quantum groups. 
\end{theorem}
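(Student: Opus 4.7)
The plan is to prove the isomorphism via Tannaka-Krein reconstruction. Concretely, I would construct a unitary equivalence of rigid $C^*$-tensor categories
\[
 \Phi : \Corep(G) \Wr_* \SSym^+(I) \;\longrightarrow\; \{\text{finite dim.\ quantum automorphisms of } Y\},
\]
where $G := \Qut_\delta(X)$ and $Y := \bigcup_{i \in I} X$, and show that $\Phi$ intertwines the tautological fiber functors to $\Hilb$. Since $G$ is unimodular (as a quantum subgroup of the unimodular $\Sym^+(V_X)$), Lemma \ref{dualisable} guarantees that every finite dimensional $I$-free wreath corepresentation of $G$ is dualisable, and Tannaka-Krein then yields the claimed isomorphism of discrete quantum groups.

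In the forward direction, given an $I$-free wreath corepresentation $\pi = (\pi^G, \pi^I)$ on a Hilbert space $\HH$, I would set
\[
 u_{(x,i)(y,j)} \;=\; \pi^G_i(u^X_{xy})\,\pi^I_{ij} \;=\; \pi^I_{ij}\,\pi^G_i(u^X_{xy}),
\]
where $u^X = (u^X_{xy})$ denotes the universal quantum automorphism of $X$, viewed as a family in $M(C_0(G))$, and the two equal expressions follow from the wreath commutation relation. Magic unitarity of $u$ on $V_Y = V_X \times I$ reduces to that of $u^X$ and $\pi^I$ via the row and column sum identities. Adjacency preservation is then verified case by case: for $i_1 = i_2 = i$ one uses commutation to rewrite $u_{(x_1,i)(y_1,j_1)} u_{(x_2,i)(y_2,j_2)}$ as $\pi^G_i(u^X_{x_1 y_1} u^X_{x_2 y_2})\pi^I_{i j_1}\pi^I_{i j_2}$, which vanishes by the quantum automorphism relation of $X$ when $j_1 = j_2$ and by column orthogonality of $\pi^I$ when $j_1 \neq j_2$; for $i_1 \neq i_2$ with a common target copy $j_1 = j_2 = j$, one commutes $\pi^I_{i_2 j}$ past $\pi^G_{i_2}(u^X_{x_2 y_2})$ to bring $\pi^I_{i_1 j}\pi^I_{i_2 j} = 0$ together.

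For the reverse direction, start with a finite dimensional quantum automorphism $\sigma = (\HH, u)$ of $Y$. Since $X$ is connected, two vertices of $Y$ lie in the same copy iff they are at finite distance; combined with Lemma \ref{distancelemma} this forces $u_{(x_1, i_1)(y_0, j)}\, u_{(x_2, i_2)(y_1, j)} = 0$ whenever $y_0 \sim_X y_1$ and $i_1 \neq i_2$. Fixing a basepoint $y_0 \in V_X$ and setting $P^{y_0}_{ij} := \sum_x u_{(x,i)(y_0, j)}$, the column sum $\sum_{i'} P^{y_1}_{i' j} = 1$ together with these orthogonalities yields $P^{y_0}_{ij} = P^{y_0}_{ij} P^{y_1}_{ij}$ and, symmetrically, $P^{y_1}_{ij} = P^{y_1}_{ij} P^{y_0}_{ij}$ for $y_0 \sim y_1$; taking adjoints identifies $P^{y_0}_{ij}$ with $P^{y_1}_{ij}$, and connectedness of $X$ propagates this to all pairs, so $P_{ij}$ is well defined. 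A symmetric argument with the row-based projections $Q_{ij} := \sum_y u_{(x_0, i)(y, j)}$, together with the identification $P = Q$, then shows $(P_{ij})$ is a magic unitary indexed by $I$. From the factorisation $u_{(x,i)(y,j)} \leq P_{ij}$ one extracts nondegenerate $*$-representations $\pi^G_i : C_0(G) \to B(\HH)$ by matching the universal coefficients $u^X_{xy}$ with $\sum_j u_{(x,i)(y,j)}$; the quantum automorphism relations of $X$ are inherited from those of $Y$, and the wreath commutation $\pi^G_i(f) P_{ij} = P_{ij} \pi^G_i(f)$ is then immediate.

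Verifying that the two constructions are mutually inverse and compatible with tensor products, contragredients, and intertwiners is a direct translation of the defining formulas, using the tensor structure on the wreath category recorded in section \ref{secfreewreath}. The main obstacle is the reverse direction, specifically the block decomposition step: establishing well-definedness of the outer magic unitary $P_{ij}$ independently of the reference vertex and extracting the inner representations $\pi^G_i$ with the correct wreath commutation. This requires a careful combination of connectedness of $X$, Lemma \ref{distancelemma}, and handling of the infinite sums in the strong operator topology; once it is in hand, all remaining verifications reduce to bookkeeping.
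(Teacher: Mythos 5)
Your proposal follows essentially the same route as the paper's proof: the same mutually inverse constructions $\pi \mapsto \bigl((\pi^G_i)_{xy}\pi^I_{ij}\bigr)$ and $u \mapsto \bigl(\sum_j u_{x_i,y_j}, \sum_v u_{x_i,v_j}\bigr)$, assembled into inverse unitary tensor functors compatible with the tautological fiber functors and fed into Tannaka--Krein. The only deviations are cosmetic: you justify well-definedness of the outer magic unitary via Lemma \ref{distancelemma} and an adjoint trick where the paper inserts a resolution of the identity and uses the adjacency relations directly, and you make the unimodularity/dualisability point explicit; both are sound.
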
 

\begin{proof} 
It suffices to construct a monoidal equivalence of the corresponding $ C^* $-tensor categories which leaves the underlying Hilbert spaces fixed. 

Assume first that $ \pi = (\pi^G, \pi^I) $ is a finite dimensional $ I $-wreath corepresentation of $ G = \Qut_\delta(X) $ with underlying Hilbert space $ \HH_\pi $. 
We construct a quantum permutation $ f(\pi) $ of the vertex set $ \bigcup_{i \in I} V_X $ on $ \HH_\pi $ by setting 
$$
f(\pi)_{x_i, y_j} = (\pi_i^G)_{xy} \pi^I_{ij} = \pi^I_{ij} (\pi_i^G)_{xy},  
$$
where $ x_i $ for $ x \in V_X $ and $ i \in I $ denotes the element $ x $ in the $ i $-th component of the disjoint union. 
Here we encode the representation $ \pi^G_i $ of $ C_0(\Qut_\delta(X)) $ by the corresponding quantum permutation. 
One checks that this is a quantum automorphism of $ \bigcup_{i \in I} X $. Indeed, assume that $ x_i, v_k $ are connected, which means $ i = k $ 
and $ (x,v) \in E_X $. If $ y_j, w_l $ are vertices with $ j \neq l $ we get $ f(\pi)_{x_i, y_j} f(\pi)_{v_k, w_l} = 0 $ since $ \pi^I_{ij} \pi^I_{il} = 0 $, 
and if $ j = l $ and $ (y,w) \notin E_X $ this follows from $ (\pi_i^G)_{xy} (\pi_i^G)_{vw} = 0 $. 

Clearly this construction defines a functor $ f: \Corep(\Qut_\delta(X) \Wr^* \Sym^+(I)) \rightarrow \Corep(\Qut_\delta(\bigcup_{i \in I} X)) $ 
which acts as the identity on morphisms. The trivial $ I $-wreath corepresentation of $ G $ is sent to the trivial quantum automorphism of $ \bigcup_{i \in I} X $. 
Moreover, if $ \pi, \eta $ are $ I $-free wreath corepresentations then 
\begin{align*}
f(\pi \tp \eta)_{x_i, y_j} &= ((\pi \tp \eta)_i^G)_{xy} (\pi \tp \eta)^I_{ij} \\
&= \sum_{v,k,l} (\pi^I_{ik} \otimes 1) ((\pi_i^G)_{xv} \otimes (\eta_k^G)_{vy}) (\pi^I_{il} \otimes \eta^I_{lj}) \\  
&= \sum_{v,k} ((\pi_i^G)_{xv} \otimes (\eta_k^G)_{vy}) (\pi^I_{ik} \otimes \eta^I_{kj}) \\  
&= \sum_{v,k} (\pi_i^G)_{xv} \pi^I_{ik} \otimes (\eta_k^G)_{vy} \eta^I_{kj} \\  
&= \sum_{v,k} f(\pi)_{x_i, v_k} \otimes f(\eta)_{v_k, y_j} \\  
&= (f(\pi) \tp f(\eta))_{x_i, y_j},  
\end{align*}
and it follows that $ f $ is in fact a unitary tensor functor. 

Conversely, assume that $ u = (u_{x_i, y_j}) $ is a finite dimensional quantum automorphism of $ \bigcup_{i \in I} X $. Define 
$$ 
g(u)^I_{ij} = \sum_v u_{x_i, v_j}
$$
where $ x $ is some fixed vertex in $ V_X $. Let us show that this is in fact independent of the choice of $ x $. Since $ X $ is connected 
it suffices to consider $ x, x' \in V_X $ with $ (x,x') \in E_X $ and compute 
\begin{align*}
\sum_v u_{x_i, v_j} &= \sum_{v,w,k} u_{x_i, v_j} u_{x'_i, w_k} \\
&= \sum_{v,w} u_{x_i, v_j} u_{x'_i, w_j} \\
&= \sum_{v,w,k} u_{x_i, v_k} u_{x'_i, w_j} \\
&= \sum_w u_{x'_i, w_j}, 
\end{align*}
using the defining relations for a quantum automorphism. Next we check that $ g(u)^I $ is a quantum permutation of $ I $. 
Since the projections $ u_{x_i, y_j} $ for fixed $ x_i $ are pairwise orthogonal it is clear that $ g(u)^I_{ij} $ is a projection. 
Moreover we have $ \sum_j g(u)^I_{ij} = 1 $ from the fact that $ u $ is a quantum permutation. 
Using our above considerations we obtain
$$
u_{w_i, y_j} g(u)^I_{ij} = \sum_v u_{w_i, y_j} u_{x_i, v_j} = \sum_v u_{w_i, y_j} u_{w_i, v_j} = u_{w_i, y_j} 
$$
for all $ w, y \in X $. In particular we get $ \sum_w u_{w_i, y_j} \leq \sum_v u_{x_i, v_j} $, and by symmetry we conclude that in fact 
$$
g(u)^I_{ij} = \sum_v u_{x_i, v_j} = \sum_w u_{w_i, y_j} 
$$
for any $ x, y \in V_X $. Using this observation it is straightforward to verify the remaining relation $ \sum_i g(u)^I_{ij} = 1 $. 

In addition we define 
$$
(g(u)^G_i)_{xy} = \sum_j u_{x_i, y_j}, 
$$
and we claim that $ g(u)^G_i $ is a quantum automorphism of $ X $. Clearly each $ (g(u)^G_i)_{xy} $ is a projection, and the relation $ \sum_y (g(u)^G_i)_{xy}  = 1 $ follows 
from the fact that $ u $ is a quantum permutation. From our above arguments we also obtain
\begin{align*}
\sum_x (g(u)^G_i)_{xy} = \sum_x \sum_j u_{x_i, y_j} = \sum_j \sum_x u_{x_i, y_j} = \sum_j \sum_w u_{v_i, w_j} = 1,  
\end{align*} 
where $ v \in V_X $ is arbitrary. Note here that $ u_{v_i, y_j} u_{w_i, y_k} = 0 $ for all $ v \neq w $ and $ j \neq k $ due to Lemma \ref{distancelemma}. 
For $ x,y,v,w \in V_X $ with $ (x,v) \in E_X $ and $ (y,w) \notin E_X $ we get 
$$
(g(u)^G_i)_{xy} (g(u)^G_i)_{vw} = \sum_{j,k} u_{x_i, y_j} u_{v_i, w_k} = 0 
$$
by definition of the adjacency relations in $ \bigcup_{i \in I} X $. Similarly, if $ (x,v) \notin E_X $ and $ (y,w) \in E_X $ we obtain 
$$
(g(u)^G_i)_{xy} (g(u)^G_i)_{vw} = \sum_{j,k} u_{x_i, y_j} u_{v_i, w_k} = \sum_{j \neq k} u_{x_i, y_j} u_{v_i, w_k} = 0  
$$
using Lemma \ref{distancelemma}, and we conclude that $ g(u)^G_i $ is indeed a quantum automorphism of $ X $. 

Moreover one has 
$$
(g(u)^G_i)_{xy} g(u)^I_{ij} = \sum_{v,k} u_{x_i, y_k} u_{x_i, v_j} = \sum_{v,k} u_{x_i, v_j} u_{x_i, y_k} = g(u)^I_{ij} (g(u)^G_i)_{xy} 
$$
for all $ x,y \in V_X $ and $ i,j \in I $. 
Setting $ g(u) = (g(u)^G, g(u)^X) $ we thus obtain an $ X $-wreath corepresentation of $ G = \Qut_\delta(X) $ on the underlying Hilbert space of $ u $. 
Clearly this construction yields a functor $ g: \Corep(\Qut_\delta(\bigcup_{i \in I} X)) \rightarrow \Corep(\Qut_\delta(X) \Wr^* \Sym^+(I))  $ which acts 
as the identity on morphisms. 

To see that $ f $ and $ g $ are mutually inverse equivalences of categories let $ \pi = (\pi^G, \pi^I) \in \Corep(\Qut_\delta(X) \Wr^* \Sym^+(I)) $ 
and compute 
\begin{align*}
((g f)(\pi)^G_i)_{xy} = \sum_j f(\pi)_{x_i, y_j} = \sum_j (\pi_i^G)_{xy} \pi^I_{ij} = (\pi_i^G)_{xy} 
\end{align*}  
and 
\begin{align*}
((g f)(\pi)^I)_{ij} = \sum_y f(\pi)_{x_i, y_j} = \sum_y (\pi_i^G)_{xy} \pi^I_{ij} = \pi^I_{ij}.  
\end{align*}  
Conversely, let $ u \in \Corep(\Qut_\delta(\bigcup_{i \in I} X)) $ and compute 
\begin{align*}
(fg)(u)_{x_i y_j} &= (g(u)_i^G)_{xy} g(u)^I_{ij} = \sum_{k,v} u_{x_i, y_k} u_{x_i, v_j} = u_{x_i y_j} 
\end{align*}
as required. Since both $ f $ and $ g $ act as the identity on morphism spaces this finishes the proof. 
\end{proof}

\subsection{Unit distance graphs} 

A \emph{unit distance graph} is a graph obtained by taking a subset of $ \mathbb{R}^d $ as vertex set and connecting two vertices iff their Euclidean distance is equal to $ 1 $. 
Examples of finite unit distance graphs in the plane include cycle graphs, hypercube graphs,
and the Petersen graph. Note that unit distance graphs in $ \mathbb{R}^2 $ need not be planar. It is known \cite{MAEHARA_ROEDL_unitdistance} that every finite graph can be represented 
as a unit distance graph for a suitable dimension $ d $. From our perspective, the class of unit distance graphs is interesting since it provides natural examples of graphs with uncountably many vertices. 

Consider for instance the unit distance graph $ U_d $ associated to Euclidean space $ \mathbb{R}^d $ itself. 
Here the case $ d = 1 $ is somewhat special, since $ U_d $ for $ d > 2 $ is connected while $ U_1 $ is highly disconnected. In fact, 
one can write $ U_1 \cong \bigcup_{x \in \mathbb{R}/\mathbb{Z}} L $ as disjoint union of uncountably many copies of the ``infinite line'' graph $ L $, that is, the unit 
distance graph of $ \mathbb{Z} \subset \mathbb{R} $. 

\begin{prop} \label{linegraph}
The ``infinite line'' graph $ L $ admits no quantum automorphisms which are not classical. 
\end{prop}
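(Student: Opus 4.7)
The plan is to exhibit every entry of an arbitrary quantum automorphism $u$ as an integer linear combination of the entries of rows $0$ and $1$, and then to force those two rows to commute by extracting a commutator out of the row-$2$ orthogonality. Once all $u_{i,j}$ are shown to commute, irreducibility will force $\dim \mathcal{H} = 1$ and we will recover a classical automorphism.

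First I would write out $uA_L = A_L u$ entry-wise as
\[
u_{i,j-1} + u_{i,j+1} = u_{i-1,j} + u_{i+1,j} \qquad (i,j \in \mathbb{Z}),
\]
which rearranges to the three-term recurrence $u_{i+1,j} = u_{i,j-1} + u_{i,j+1} - u_{i-1,j}$, together with its mirror image expressing $u_{-1,j}$ in terms of rows $0$ and $1$. Induction on $|i|$ then expresses each $u_{i,j}$ as an integer linear combination of entries of rows $0$ and $1$, so it suffices to show that the projections $P_j := u_{0,j}$ and $Q_k := u_{1,k}$ pairwise commute.

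The heart of the argument is the following calculation. Using the recurrence, write $u_{2,j} = Q_{j-1} + Q_{j+1} - P_j$ and expand
\[
u_{2,j}\, u_{2,j+2} = (Q_{j-1} + Q_{j+1} - P_j)(Q_{j+1} + Q_{j+3} - P_{j+2}).
\]
Of the nine cross terms, four vanish by the row orthogonality of the magic unitary (the three $QQ$ products with distinct column indices, plus $P_j P_{j+2}$) and two more by Lemma \ref{distancelemma}, namely $Q_{j-1} P_{j+2} = 0$ since $d(1,0) = 1 \ne 3 = d(j-1, j+2)$, and symmetrically $P_j Q_{j+3} = 0$. What survives is $Q_{j+1} - Q_{j+1} P_{j+2} - P_j Q_{j+1}$, and the partition-of-unity identity $Q_{j+1} = Q_{j+1} P_j + Q_{j+1} P_{j+2}$ (valid because $Q_{j+1} P_k = 0$ for $k \notin \{j, j+2\}$ by the distance lemma) collapses this remainder to the single commutator $[Q_{j+1}, P_j]$. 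Since $u_{2,j}$ and $u_{2,j+2}$ are distinct entries of row $2$, their product vanishes, forcing $[Q_{j+1}, P_j] = 0$. The mirror expansion of $u_{2,j}\, u_{2,j-2}$ gives $[Q_{j-1}, P_j] = 0$, and for $|j-k| \ne 1$ Lemma \ref{distancelemma} already gives $P_j Q_k = 0 = Q_k P_j$. Hence row $0$ and row $1$ commute entry-wise.

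To conclude, each of rows $0$ and $1$ individually consists of pairwise orthogonal (hence commuting) projections, and the two rows commute with one another by the preceding step, so the $*$-subalgebra they generate in $B(\mathcal{H})$ is abelian; by the recurrence it contains every $u_{i,j}$. If $(\mathcal{H}, u)$ is irreducible then its commutant is $\mathbb{C}$, which forces each projection $u_{i,j}$ to be scalar and hence $\dim \mathcal{H} = 1$, so by the graph analogue of Lemma \ref{onedim} the quantum automorphism is classical. The delicate step is the identification $u_{2,j}\, u_{2,j+2} = [Q_{j+1}, P_j]$: one must track carefully which of the nine products vanish by which mechanism and then notice that the row-$1$ partition of unity telescopes the remainder. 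Once this identity is established, the rest of the proof propagates essentially automatically.
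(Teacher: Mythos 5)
Your proof is correct, and it takes a genuinely different route from the one in the paper. Both arguments start from the same two ingredients: the entrywise relation $u_{i,j-1}+u_{i,j+1}=u_{i-1,j}+u_{i+1,j}$ coming from $uA_L=A_Lu$, and the vanishing $u_{0,j}u_{1,k}=0$ for $|j-k|\neq 1$ (for which, as you note, the defining adjacency relation of Definition \ref{defqut} already suffices; Lemma \ref{distancelemma} is not needed in full strength). From there the paper's proof (due to Daws) is lattice-theoretic: it reduces by translation invariance to showing that $u_{0,0}$ commutes with $u_{i,j}$ for $|i|=|j|$, establishes the base case $|i|=|j|=1$ by a sandwich argument with operator inequalities, meets $u_{0,0}\wedge u_{1,\pm 1}$ and orthogonal complements, and then inducts along the diagonal using the same linear relation. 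Your proof instead extracts the commutator algebraically: substituting the recurrence into the row-$2$ orthogonality $u_{2,j}u_{2,j+2}=0$, killing six of the nine terms by row orthogonality and adjacency, and telescoping the remainder with $u_{1,j+1}=u_{1,j+1}u_{0,j}+u_{1,j+1}u_{0,j+2}$ yields exactly $[u_{1,j+1},u_{0,j}]=0$ (and the mirror computation gives $[u_{1,j-1},u_{0,j}]=0$), after which the three-term recurrence expresses every $u_{i,j}$ as an integer combination of rows $0$ and $1$, so all entries lie in one abelian $*$-algebra. I checked the nine-term expansion and the telescoping identity; they are right, and the downward recurrence $u_{-1,j}=u_{0,j-1}+u_{0,j+1}-u_{1,j}$ indeed lets the induction run in both directions with only finite sums, so there are no convergence issues. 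What the two approaches buy: yours is shorter, purely algebraic, and gives the extra structural information that the whole magic unitary is determined linearly by two consecutive rows; the paper's projection-lattice argument does not rely on solving the recurrence explicitly and is phrased in a way (inequalities, meets, complements) that is arguably more robust when exact closed-form expressions for distant rows are unavailable. Your closing remark about irreducibility is a harmless addendum; the substantive content, as in the paper, is that all entries of an arbitrary quantum automorphism of $L$ commute, which is precisely the criterion the paper uses for having no non-classical quantum automorphisms.
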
 

\begin{proof} 
The following proof is due to Matthew Daws. As pointed out by Stefaan Vaes, an alternative argument can also be given using the techniques in \cite{ROLLIER_VAES_qut}. 

Assume that $ (\HH,u) $ is a quantum permutation of $ L $. From the adjacency relations we see that $ u_{i, j + 1} + u_{i, j - 1} = u_{i + 1,j} + u_{i - 1,j} $ 
for all $ i,j \in \mathbb{Z} $. We need to show that all projections $ u_{i,j} $ mutually commute. 

Since the problem is invariant under translations it suffices to check that $ u_{0,0} $ commutes with all other projections. 
From Lemma \ref{distancelemma} we know that $ u_{0,0} $ commutes with $ u_{i,j} $ provided $ |i| \neq |j| $. 
Therefore it is enough to verify that $ u_{0,0} $ commutes with all $ u_{i,j} $ such that $ |i| = |j| $. 

We prove this by induction on $ |i| = |j| = n $, the case $ n = 0 $ being trivial. By slight abuse of notation, we shall confuse projections in $ B(\HH) $ with 
their images in $ \HH $ in the sequel. We shall write $ \oplus $ to stress when the sum of two projections is orthogonal. 

Consider the case $ |i| = |j| = 1 $. Firstly, note that the Hilbert space $ \HH $ decomposes into an orthogonal direct sum
\begin{equation} \label{d1}
u_{1,1} \oplus u_{1,-1} \oplus (u_{1,1} + u_{1,-1})^\perp.
\end{equation}
As $ u_{-1,-1} \perp u_{1,-1} $ this implies 
\begin{equation} \label{d2}
u_{-1,-1} \leq u_{1,1} \oplus (u_{1,1} + u_{1,- 1})^\perp. 
\end{equation}
Using $  u_{-1, 1} + u_{-1, - 1} = u_{0, 0} + u_{-2,0} $ we get
\begin{equation} \label{d3}
u_{-1,-1} \leq u_{0, 0} + u_{-2,0}. 
\end{equation}
Moreover, by the magic unitary condition we know that $ u_{- 2, 0} $ is orthogonal to $ u_{0,0} + u_{2,0} = u_{1,1} + u_{1,-1} $, so 
\begin{equation} \label{d4}
u_{-2, 0} \leq (u_{1,1} + u_{1,-1})^\perp. 
\end{equation}
Combining \eqref{d3} and \eqref{d4} gives 
\begin{equation} \label{d5}
u_{-1, -1}  \leq u_{0,0} \oplus (u_{1,1} + u_{1,-1})^\perp.
\end{equation} 
From \eqref{d2} and \eqref{d5} we now get  
\begin{equation} \label{d6}
u_{-1,-1} \leq (u_{0,0} \wedge u_{1,1}) \oplus (u_{1,1} + u_{1,-1})^\perp, 
\end{equation}
where $ u_{0,0} \wedge u_{1,1} $ is the orthogonal projection onto the intersection of the images of $ u_{0,0} $ and $ u_{1,1} $. 
By symmetry we also have 
\begin{equation} \label{d7}
u_{-1,1} \leq (u_{0,0} \wedge u_{1,-1}) \oplus (u_{1,1} + u_{1,-1})^\perp.
\end{equation}
Combining \eqref{d6} and \eqref{d7} yields 
\begin{equation} \label{d8}
u_{-1, 1} \oplus u_{-1,-1} \leq (u_{0,0} \wedge u_{1,1}) \oplus (u_{0,0} \wedge u_{1,-1}) \oplus (u_{1,1} + u_{1,-1})^\perp. 
\end{equation}
Recall that $ u_{0,0} \leq u_{-1,1} \oplus u_{-1,-1} $ and $ u_{0,0} \leq u_{1,1} + u_{1,-1} $. Hence \eqref{d8} implies  
\begin{equation} \label{d10}
u_{0,0} \leq (u_{0,0} \wedge u_{1,1}) \oplus (u_{0,0} \wedge u_{1,-1}). 
\end{equation}
A symmetric argument gives 
\begin{equation} \label{d11} 
u_{2,0} \leq (u_{2,0} \wedge u_{1,1}) \oplus (u_{2,0} \wedge u_{1,-1}). 
\end{equation}
Combining \eqref{d10} and \eqref{d11} we obtain 
\begin{align*}
u_{1,1} \oplus u_{1,-1} &= u_{0,0} \oplus u_{2, 0} \\
&\leq (u_{0,0} \wedge u_{1,1}) \oplus (u_{0,0} \wedge u_{1,-1}) \oplus (u_{2,0} \wedge u_{1,1}) \oplus (u_{2,0} \wedge u_{1,-1}) \\
&\leq u_{1,1} \oplus u_{1,-1}. 
\end{align*}
It follows that we have equality throughout in this relation. In particular, $ u_{0,0} $ commutes with $ u_{1,1} $ and $ u_{1,-1} $. A symmetric 
argument shows that $ u_{0,0} $ commutes with $ u_{-1,1} $ and $ u_{-1,-1} $. 

Now assume that $ u_{0,0} $ commutes with $ u_{i,j} $ provided $ |i| = k = |j| $ for all $ 0 \leq k \leq n $ for some $ n \geq 1 $.  
Then $ u_{0,0} $ commutes with $ u_{n+1,n-1}, u_{n-1, n+1} $ and $ u_{n - 1, n-1} $. Using $ u_{n + 1, n + 1} + u_{n + 1, n - 1} = u_{n - 1, n - 1} + u_{n - 1, n + 1} $ 
it follows that $ u_{0,0} $ also commutes with $ u_{n + 1, n + 1} $. Again, by symmetry considerations we see that $ u_{0,0} $ 
commutes in fact with all $ u_{i,j} $ such that $ |i| = |j| = n + 1 $. This finishes the proof. 
\end{proof} 

Proposition \ref{linegraph} shows in particular that the canonical morphism $ \Aut(L) \rightarrow \Qut_\delta(L) $ is an isomorphism. 
Combining Proposition \ref{linegraph} with Theorem \ref{disjointunion} allows us to determine the quantum automorphism group of $ U_1 $ as follows. 

\begin{prop} 
The quantum automorphism group $ \Qut_\delta(U_1) $ is isomorphic to the free wreath product $ \Aut(L) \Wr^* \Sym^+(\mathbb{R}/\mathbb{Z}) $. 
\end{prop}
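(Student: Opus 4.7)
The proof plan is simply to chain together three earlier results. First I would invoke the decomposition $ U_1 \cong \bigcup_{x \in \mathbb{R}/\mathbb{Z}} L $ already recorded in the paper, which expresses $ U_1 $ as a disjoint union of uncountably many copies of the ``infinite line'' graph $ L $ indexed by the coset space $ \mathbb{R}/\mathbb{Z} $ (each coset $ x + \mathbb{Z} \subset \mathbb{R} $ is a connected component, and within each component the unit distance structure is exactly that of $ L $).

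Next I would apply Theorem \ref{disjointunion} with the connected graph $ X = L $ and index set $ I = \mathbb{R}/\mathbb{Z} $. This yields a canonical isomorphism of discrete quantum groups
$$
\Qut_\delta(U_1) \;\cong\; \Qut_\delta\bigg(\bigcup_{x \in \mathbb{R}/\mathbb{Z}} L\bigg) \;\cong\; \Qut_\delta(L) \Wr^* \Sym^+(\mathbb{R}/\mathbb{Z}).
$$
The hypothesis of Theorem \ref{disjointunion} is satisfied because the path graph $ L $ is connected; no extra verification is needed since Theorem \ref{disjointunion} has been established in full generality.

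Finally, I would use Proposition \ref{linegraph} together with the observation recorded immediately afterwards, namely that the canonical morphism $ \Aut(L) \rightarrow \Qut_\delta(L) $ is an isomorphism of discrete quantum groups. Substituting this into the previous display gives the claimed isomorphism $ \Qut_\delta(U_1) \cong \Aut(L) \Wr^* \Sym^+(\mathbb{R}/\mathbb{Z}) $.

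There is really no main obstacle here, since all the substantial work has been done already: Theorem \ref{disjointunion} supplies the free wreath product decomposition for disjoint unions of copies of a connected graph, and Proposition \ref{linegraph} identifies the quantum automorphism group of the single component $ L $ with its classical automorphism group. The only thing worth double-checking is that the free wreath product construction $ \Wr^* $ as defined in Section \ref{secfreewreath} makes sense for an uncountable index set such as $ \mathbb{R}/\mathbb{Z} $, but this is built into Definition \ref{deffreewreath}, which is stated for an arbitrary set $ X $.
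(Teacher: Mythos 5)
Your proposal is correct and follows exactly the route the paper takes: decompose $ U_1 \cong \bigcup_{x \in \mathbb{R}/\mathbb{Z}} L $, apply Theorem \ref{disjointunion} to the connected graph $ L $ with index set $ \mathbb{R}/\mathbb{Z} $, and then substitute $ \Qut_\delta(L) \cong \Aut(L) $, which is the consequence of Proposition \ref{linegraph} noted immediately before the statement. Nothing further is needed.
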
  

In particular $ U_1 $ has quantum symmetry, and admits in fact uncountably many inequivalent quantum automorphisms in every dimension. 

For $ d > 1 $ the situation seems much less clear. According to the Beckman-Quarles theorem \cite{BECKMAN_QUARLES_isometries}, the classical 
symmetries of $ U_d $ for $ d > 1 $ are given precisely by the isometries of $ \mathbb{R}^d $. In particular, these graphs do not admit disjoint automorphisms. 
Let us pose the following question.  

\begin{question} 
Does $ U_d $ for $ d > 1 $ have quantum symmetry?
\end{question} 

Of course, unit distance graphs can be defined in an analogous fashion for subsets of more general metric spaces. It would be interesting to relate quantum 
symmetries of such graphs with the study of quantum automorphism groups of metric spaces, see \cite{GOSWAMI_existenceexamples}.

\subsection{Graph products} 

An easy way to obtain graphs with quantum symmetry out of known examples is to consider suitable products, see \cite{BANICA_BICHON_ordereleven}. 

Let us recall some definitions. If $ X, Y $ are graphs then the \emph{direct product} $ X \times Y $, also known as tensor product or Kronecker product, 
is the graph with vertex set $ V_{X \times Y} = V_X \times V_Y $ and adjacency matrix $ A_{X \times Y} = A_X \otimes A_Y $, so 
that $ (A_{X \times Y})_{(x_1, y_1),(x_2, y_2)} = (A_X)_{x_1, x_2} (A_Y)_{y_1, y_2} $. 
The \emph{cartesian product} $ X \Box Y $ is the graph with $ V_{X \Box Y} = V_X \times V_Y $ and $ A_{X \Box Y} = A_X \otimes 1 + 1 \otimes A_Y $, 
where $ 1 $ denotes the identity matrix.  
Finally, the \emph{strong product} $ X \boxtimes Y $ is given by $ V_{X \boxtimes Y} = V_X \times V_Y $ and $ A_{X \boxtimes Y} = (A_X + 1) \otimes (A_Y + 1) - 1 \otimes 1 $.  

\begin{prop} \label{graphproducts}
Let $ X, Y $ be graphs. Then there is a canonical morphism of discrete quantum groups $ \iota: \Qut_\delta(X) \times \Qut_\delta(Y) \rightarrow \Qut_\delta(X \times Y) $ such that 
the corresponding $ * $-homomorphism $ \iota^*: C_0(\Qut_\delta(X \times Y)) \rightarrow M(C_0(\Qut_\delta(X)) \otimes C_0(\Qut_\delta(Y))) $ satisfies   
$$
\iota^*(u_{(x_1, y_1), (x_2, y_2)}) = u_{x_1, x_2} \otimes u_{y_1, y_2} 
$$
for all $ x_1, x_2 \in V_X, y_1, y_2 \in V_Y $. The same holds if $ X \times Y $ is replaced by $ X \Box Y $ or $ X \boxtimes Y $.  
\end{prop}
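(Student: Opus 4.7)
The plan is to exhibit the morphism via Tannaka--Krein reconstruction: a morphism $\iota: \Qut_\delta(X) \times \Qut_\delta(Y) \to \Qut_\delta(X \times Y)$ corresponds to a unitary tensor functor from $\Corep(\Qut_\delta(X)) \boxtimes \Corep(\Qut_\delta(Y))$ to the category of finite dimensional quantum automorphisms of $X \times Y$. It is therefore enough to construct, bifunctorially in pairs of finite dimensional quantum automorphisms $\sigma = (\HH_\sigma, u^\sigma)$ of $X$ and $\tau = (\HH_\tau, u^\tau)$ of $Y$, a finite dimensional quantum automorphism $\sigma \boxtimes \tau$ of the product graph, and check that this assignment is compatible with tensor products of corepresentations.

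First I would set $\sigma \boxtimes \tau = (\HH_\sigma \otimes \HH_\tau, w)$ where
$$ w_{(x_1,y_1),(x_2,y_2)} = u^\sigma_{x_1,x_2} \otimes u^\tau_{y_1,y_2}. $$
Each such element is evidently a projection on $\HH_\sigma \otimes \HH_\tau$, and the row and column sums factor as
$$ \sum_{(x_2,y_2)} u^\sigma_{x_1,x_2} \otimes u^\tau_{y_1,y_2} = \Bigl(\sum_{x_2} u^\sigma_{x_1,x_2}\Bigr) \otimes \Bigl(\sum_{y_2} u^\tau_{y_1,y_2}\Bigr) = 1 \otimes 1, $$
so $w$ is a quantum permutation of $V_X \times V_Y$.

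The key observation for the adjacency condition is uniform across the three products: for each of $\times$, $\Box$, $\boxtimes$ the relation $\rel_{X?Y}((a_1,b_1),(a_2,b_2))$ is a fixed function of the coordinate relations $\rel_X(a_1,a_2)$ and $\rel_Y(b_1,b_2)$, since in every case adjacency in the product is a Boolean combination of coordinate-wise equality and coordinate-wise adjacency. Hence if $\rel_{X?Y}((x_1,y_1),(v_1,w_1)) \neq \rel_{X?Y}((x_2,y_2),(v_2,w_2))$, then either $\rel_X(x_1,v_1) \neq \rel_X(x_2,v_2)$ or $\rel_Y(y_1,w_1) \neq \rel_Y(y_2,w_2)$, and the corresponding factor in
$$ w_{(x_1,y_1),(x_2,y_2)} w_{(v_1,w_1),(v_2,w_2)} = u^\sigma_{x_1,x_2} u^\sigma_{v_1,v_2} \otimes u^\tau_{y_1,y_2} u^\tau_{w_1,w_2} $$
vanishes by the quantum automorphism property of $\sigma$ or $\tau$. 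A routine case check, organised around the three possible values of $\rel$, verifies the claim for all three products. This is the main technical step, but it reduces to bookkeeping once the uniformity observation is in place.

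Finally I would check that $(\sigma,\tau) \mapsto \sigma \boxtimes \tau$ defines a unitary tensor functor: intertwiners go to their tensor products, and a short calculation using $(u^{\sigma_1} \tp u^{\sigma_2})_{x_1,x_2} = \sum_x u^{\sigma_1}_{x_1,x} u^{\sigma_2}_{x,x_2}$ together with the analogous identity for $\tau$ shows that $(\sigma_1 \boxtimes \tau_1) \tp (\sigma_2 \boxtimes \tau_2) \cong (\sigma_1 \tp \sigma_2) \boxtimes (\tau_1 \tp \tau_2)$ via the canonical flip swapping the middle tensor factors. Tannaka--Krein reconstruction then produces the morphism $\iota$, and the stated formula for $\iota^*$ on the generators $u_{(x_1,y_1),(x_2,y_2)}$ of $C_0(\Qut_\delta(X \times Y))$ is exactly the definition of $\sigma \boxtimes \tau$ applied to the universal quantum automorphisms.
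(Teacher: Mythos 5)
Your argument is correct, and its computational core coincides with the paper's: the formula $w_{(x_1,y_1),(x_2,y_2)} = u^\sigma_{x_1,x_2} \otimes u^\tau_{y_1,y_2}$, the factorisation of row and column sums, and the vanishing of products violating adjacency are exactly the content of the paper's proof. The difference is the packaging. The paper works directly at the level of function algebras: it shows that two quantum automorphisms of $X$ and $Y$ on the same Hilbert space with commuting entries multiply to a quantum automorphism of $X \times Y$, verifying $u A = A u$ by a matrix computation (for $\times$, with $\Box$ and $\boxtimes$ declared analogous), applies this blockwise to $u^\sigma \otimes 1$ and $1 \otimes u^\tau$ to get the nondegenerate $*$-homomorphism $\iota^*$, and then checks compatibility with the comultiplications on the generators. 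You instead construct a unitary tensor functor on corepresentation categories and invoke Tannaka--Krein, with monoidality witnessed by the middle flip; this is the same mechanism the paper itself uses for disjoint unions (Theorem \ref{disjointunion}) and infinite cartesian products (Proposition \ref{infinitecartesianqut}), so it is a natural alternative. Your route buys two things: the observation that $\rel$ in each of $X \times Y$, $X \Box Y$, $X \boxtimes Y$ is a function of the coordinate relations handles all three products uniformly, and comultiplication-compatibility is absorbed into monoidality of the functor. What it costs is that you should state explicitly the (standard) identification of $\Corep(\Qut_\delta(X) \times \Qut_\delta(Y))$ with the product of the two corepresentation categories --- i.e.\ that irreducible corepresentations of a product of discrete quantum groups are exterior tensor products of irreducibles --- and the compatibility of your functor with the tautological fiber functors, since these are what turn the functor into the claimed morphism $\iota$ with the stated formula for $\iota^*$.
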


\begin{proof} 
We consider only the case $ X \times Y $ since the arguments for $ X \Box Y $ and $ X \boxtimes Y $ are analogous. 

Assume that $ \sigma_X = (\HH, u^X), \sigma_Y = (\HH, u^Y) $ are quantum automorphisms of $ X $ and $ Y $ on the same Hilbert space $ \HH $
such that $ u^X_{x_1 x_2} u^Y_{y_1 y_2} = u^Y_{y_1 y_2} u^X_{x_1 x_2} $ for all $ x_1, x_2 \in V_X, y_1, y_2 \in V_Y $. Then 
$$
u^{X \times Y}_{(x_1, y_1), (x_2, y_2)} = u^X_{x_1 x_2} u^Y_{y_1 y_2} 
$$
determines a quantum  automorphism $ \sigma_{X \times Y} = (\HH, u^{X \times Y}) $ of the direct product $ X \times Y $. Indeed, 
the above formula defines a quantum permutation of $ V_X \times V_Y $ since the matrix elements $ u^X_{x_1 x_2} $ and $ u^Y_{y_1 y_2} $ commute, 
and we have
\begin{align*}
(u^{X \times Y} (A_X \otimes 1))_{(x_1, y_1), (x_2, y_2)} &= \sum_{v,w} u^{X \times Y}_{(x_1, y_1),(v,w)} (A_X)_{v,x_2} \delta_{w, y_2} \\
&= \sum_{v} u^X_{x_1 v} (A_X)_{v,x_2} u^Y_{y_1 y_2} \\
&= \sum_{v} (A_X)_{x_1, v} u^X_{v x_2} u^Y_{y_1 y_2} \\
&= \sum_{v,w} (A_X)_{x_1,v} \delta_{y_1 w} u^{X \times Y}_{(v, w),(x_2,y_2)} \\
&= ((A_X \otimes 1)u^{X \times Y})_{(x_1, y_1), (x_2, y_2)}
\end{align*}
for all $ (x_1, y_1), (x_2, y_2) \in V_X \times V_Y $. A similar computation shows that $ u^{X \times Y} $ commutes with $ 1 \otimes A_Y $, and hence it also commutes 
with $ A_X \otimes A_Y $ as required. 

Applying this to $ \HH = \HH_\sigma \otimes \HH_\tau $ for $ \sigma = (\HH_\sigma, u^\sigma) \in \Qut_\delta(X), \tau = (\HH_\tau, u^\tau) \in \Qut_\delta(Y) $ 
and $ u^X = u^\sigma \otimes 1, u^Y = 1 \otimes u^\tau $ shows that we obtain a 
nondegenerate $ * $-homomorphism $ \iota^*: C_0(\Qut_\delta(X \times Y)) \rightarrow M(C_0(\Qut_\delta(X)) \otimes C_0(\Qut_\delta(Y))) $ 
such that $ \iota^*(u_{(x_1, y_1), (x_2, y_2)}) = u_{x_1, x_2} \otimes u_{y_1, y_2} $. It follows in particular that $ \iota^* $ is compatible with the 
comultiplications, and hence $ \iota^* $ implements a morphism of quantum groups $ \iota: \Qut_\delta(X) \times \Qut_\delta(Y) \rightarrow \Qut_\delta(X \times Y) $ as claimed. 
\end{proof} 

If one of the graphs $ X, Y $ has quantum symmetry then Proposition \ref{graphproducts} shows that the same is true for $ X \times Y, X \Box Y $ and $ X \boxtimes Y $. 
This allows one to produce basic examples of infinite graphs with quantum symmetry by taking products of finite graphs with quantum symmetry and arbitrary infinite graphs. 
Note that the graph products $ X \Box Y $ and $ X \boxtimes Y $ are connected whenever $ X $ and $ Y $ are. 

If $ X, Y $ are finite graphs then the sufficient criteria given in \cite{BANICA_BICHON_ordereleven} 
for having $ \Qut(X) \times \Qut(X) \cong \Qut(X \times Y) $ are also sufficient to show that 
the morphism $ \iota: \Qut_\delta(X) \times \Qut_\delta(X) \rightarrow \Qut_\delta(X \times Y) $ from Proposition \ref{graphproducts} is an isomorphism. 
It would be interesting to find natural criteria in the infinite situation. 

Let us also consider infinite products. If $ (X_i)_{i \in I} $ is an arbitrary family of graphs then the cartesian product $ X = \Box_{i \in I} X_i $ 
is defined by $ V_X = \prod_{i \in I} V_{X_i} $ and $ ((x_i),(y_i)) $ in $ E_X $ iff there exists $ l \in I $ such that $ (x_l, y_l) \in E_{X_l} $ and $ x_i = y_i $ 
for $ i \neq l $. Note that $ X $ has uncountably many vertices as soon as $ I $ is infinite and all $ X_i $ are nontrivial. 
In particular, the graph $ X $ is highly disconnected in this situation even if all $ X_i $ are connected.  

It is therefore customary to restrict attention to \emph{weak cartesian products}, see \cite{SABIDUSSI_graphmultiplication}. 
More precisely, if $ (X_i)_{i \in I} $ is a family of graphs and $ a = (a_i)_{i \in I} \in \prod_{i \in I} V_{X_i} $, then the weak cartesian product $ \Box_{i \in I}^a X_i $ 
is the induced subgraph of $ \Box_{i \in I} X_i $ corresponding to all $ x = (x_i)_{i \in I} $ such that $ x_i = a_i $ for all but finitely many $ i \in I $. 
If every $ X_i $ admits a transitive group of automorphisms then all weak cartesian products of $ (X_i)_{i \in I} $ are mutually isomorphic, 
and we can speak of \emph{the} weak cartesian product of the family of graphs. 

Let $ (G_i)_{i \in I} $ be a family of discrete quantum groups. We shall write $ \coprod_{i \in I} G_i $ for the discrete quantum group
$$
\coprod_{i \in I} G_i = \varinjlim_{F \subset I} \prod_{i \in F} G_i,  
$$
where the limit is taken over all finite subsets $ F \subset I $. 
This is a quantum analogue of the subgroup of an infinite product of groups consisting of those elements for which almost all entries are the identity. 

\begin{prop} \label{infinitecartesianqut}
Let $ (X_i)_{i \in I} $ be a family of graphs. Then there is a canonical embedding morphism of discrete quantum groups 
$$ 
\coprod_{i \in I} \Qut_\delta(X_i) \rightarrow \Qut_\delta(\Box_{i \in I}^a X_i) 
$$
for any $ a = (a_i)_{i \in I} \in \prod_{i \in I} V_{X_i} $. 
\end{prop}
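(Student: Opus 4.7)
The plan is to handle the finite case first by iterating Proposition \ref{graphproducts}, then to extend trivially to the weak infinite cartesian product, and finally to pass to the colimit. For each finite $F \subset I$, iterating Proposition \ref{graphproducts} (using associativity of the cartesian product of graphs up to canonical isomorphism) yields a morphism
$$
\phi_F \colon \prod_{i \in F} \Qut_\delta(X_i) \to \Qut_\delta(\Box_{i \in F} X_i).
$$
To extend from the finite product to the weak infinite product, I would construct a morphism $\psi_F \colon \Qut_\delta(\Box_{i \in F} X_i) \to \Qut_\delta(\Box_{i \in I}^a X_i)$ by ``extension by the identity outside $F$'': to a finite dimensional quantum automorphism $(\HH, v)$ of $\Box_{i \in F} X_i$ one associates the pair $(\HH, \tilde v)$ on $\Box_{i \in I}^a X_i$ given by
$$
\tilde v_{xy} = \begin{cases} v_{(x_i)_{i \in F},\, (y_i)_{i \in F}} & \text{if } x_i = y_i \text{ for all } i \notin F, \\ 0 & \text{otherwise.} \end{cases}
$$

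The magic-unitary conditions for $\tilde v$ follow immediately from those of $v$, since a row or column sum of $\tilde v$ is supported on vertices agreeing with the fixed $x$ or $y$ outside $F$ and collapses to the corresponding sum for $v$. The crucial check is that $\tilde v$ intertwines the adjacency matrix $A_X$ of $X = \Box_{i \in I}^a X_i$. Since adjacency in $X$ is along a single ``edge coordinate'' $l$, one splits the verification into the cases $l \in F$ and $l \notin F$. In the first case both $(\tilde v A_X)_{xz}$ and $(A_X \tilde v)_{xz}$ reduce, via the definition of $\tilde v$, to entries of $v A_{\Box_F X_i}$ and $A_{\Box_F X_i} v$ respectively, which agree because $v$ is a quantum automorphism. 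In the second case the only contribution on either side comes from a unique adjacent vertex (with the $l$-coordinate switched and all others equal), and both sides evaluate to the same diagonal projection $v_{(x_i)_F,(x_i)_F}$. This combinatorial case analysis is the main obstacle, but it is essentially routine. Functoriality and tensor compatibility of $v \mapsto \tilde v$ are then immediate, so $\psi_F$ is well defined.

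For $F \subset F'$, the compositions $\psi_F \circ \phi_F$ and $\psi_{F'} \circ \phi_{F'}$ are compatible with the natural inclusion $\prod_{i \in F} \Qut_\delta(X_i) \hookrightarrow \prod_{i \in F'} \Qut_\delta(X_i)$ (identity on the new factors), since extending a quantum automorphism of $\Box_{i \in F} X_i$ first to $\Box_{i \in F'} X_i$ and then to $\Box_{i \in I}^a X_i$ agrees with the one-step extension. The universal property of the colimit $\coprod_{i \in I} \Qut_\delta(X_i) = \varinjlim_F \prod_{i \in F} \Qut_\delta(X_i)$ then produces the desired morphism. To see that it is an embedding one verifies that the associated $C^*$-tensor functor $\Corep(\coprod_{i \in I} \Qut_\delta(X_i)) \to \Corep(\Qut_\delta(\Box_{i \in I}^a X_i))$ is fully faithful: faithfulness is automatic from the construction, and fullness follows because restricting an intertwiner relation $T \tilde v_{xy} = \tilde v'_{xy} T$ to vertices with $x_i = y_i = a_i$ for $i \notin F$ recovers the intertwining relation for $v$ and $v'$ at the finite stage, and this passes to the colimit.
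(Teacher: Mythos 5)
Your construction, after composing, coincides with the paper's: the paper directly defines a functor $\iota_F$ sending a family $(\HH_j,u^j)_{j\in F}$ of finite dimensional quantum automorphisms to the quantum automorphism of $\Box_{i\in I}^a X_i$ with entries $\prod_{k\notin F}\delta_{x_k,y_k}\,\bigotimes_{k\in F}u^k_{x_k y_k}$, which is exactly your $\psi_F\circ\phi_F$, and then passes to the direct limit over finite $F\subset I$ just as you do. The routine verifications you outline (magic unitary relations for $\tilde v$, commutation with the adjacency matrix split according to whether the edge coordinate lies in $F$, tensor compatibility, compatibility with inclusions $F\subset F'$) are correct.

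The gap is in the embedding part. Restricting an intertwiner relation $T\tilde v_{xy}=\tilde v'_{xy}T$ to vertices agreeing with $a$ outside $F$ only shows that $T$ intertwines $v$ and $v'$ as quantum automorphisms of $\Box_{i\in F}X_i$; that is, it proves fullness of your extension step $\psi_F$. To conclude that $T$ is a morphism in $\Corep(\prod_{i\in F}\Qut_\delta(X_i))$ you additionally need fullness of $\phi_F$ on its image, namely that every intertwiner between the box-product magic unitaries with entries $\bigotimes_{k\in F}u^k_{x_k y_k}$ and $\bigotimes_{k\in F}u'^k_{x_k y_k}$ lies in $\bigotimes_{k\in F}\Hom(u^k,u'^k)$. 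Proposition \ref{graphproducts} supplies only a morphism, not an embedding, so this is not automatic, and without it you have only exhibited an embedding of some quotient of $\coprod_{i\in I}\Qut_\delta(X_i)$. The missing step is true and is precisely what the paper's argument provides by observing that tensor products of irreducibles are sent to irreducible (and pairwise inequivalent) quantum automorphisms. Alternatively you can close the gap directly: finite dimensional quantum permutations are row-finite, so summing the entries $\bigotimes_{k\in F}u^k_{x_k y_k}$ over all indices $y_k$ with $k\neq l$ recovers $1\otimes\cdots\otimes u^l_{x_l y_l}\otimes\cdots\otimes 1$; hence any intertwiner of the box-product magic unitaries intertwines each tensor factor separately, and the intertwiner space factorises as $\bigotimes_{k\in F}\Hom(u^k,u'^k)$, which is exactly the morphism space between the corresponding objects of $\Corep(\prod_{i\in F}\Qut_\delta(X_i))$. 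With this added, your proof matches the paper's.
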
 

\begin{proof} 
We define a fully faithful monoidal functor $ \iota_F: \Corep(\prod_{i \in F} \Qut_\delta(X_i)) \rightarrow \Corep(\Qut_\delta(\Box_{i \in I}^a X_i)) $ 
for a finite subset $ F \subset I $ as follows. 

Let $ (\HH_j, u^j)_{j \in F} $ be a family of finite dimensional quantum automorphisms of the graphs $ X_j $. Then we obtain a quantum 
automorphism $ \iota_F((\HH_j, u^j)_{j \in F}) $ of $ \Box_{i \in I}^a X_i $ on $ \bigotimes_{i \in F} \HH_i $ by setting 
$$
\iota_F((\HH_j, u^j)_{j \in F})_{(x_i), (y_i)} = \prod_{k \in I \setminus F} \delta_{x_k, y_k} \bigotimes_{k \in F} u^k_{x_k, y_k}
$$
for vertices $ (x_i), (y_i) $ in $ \Box_{i \in I}^a X_i $. Note here that $ x_i = a_i = y_i $ for all but finitely many $ i \in I $. 
If all $ (\HH_j, u^j) $ are irreducible then the same is true for $ \iota_F((\HH_j, u^j)_{j \in F}) $, 
and one checks that $ \iota_F $ is compatible with tensor products. Since every irreducible object in $ \Corep(\prod_{i \in F} \Qut_\delta(X_i)) $ is 
isomorphic to the tensor product of a family of irreducible finite dimensional quantum automorphisms $ (\HH_j, u^j)_{j \in F} $ as above 
it follows that $ \iota_F $ induces a fully faithful monoidal functor as required. 

Since the functors $ \iota_F $ for varying subsets $ F \subset I $ are clearly compatible with inclusions this yields the claim. 
\end{proof} 

In general the morphism from Proposition \ref{infinitecartesianqut} is not an isomorphism. 
It would be interesting to find suitable conditions under which one gets in fact an isomorphism this way.

\subsection{Infinite Hamming graphs} 

Fix $ n \in \mathbb{N} $ and consider the complete graph $ X_k = K_n $ for all $ k \in \mathbb{N} $, which we assume to be modelled on the set $ \mathbb{Z}/n\mathbb{Z} $. 
Fixing the connected component of the vertex $ a = 0 \in \prod \mathbb{Z}/n\mathbb{Z} $ we see that the associated weak cartesian product $ X = \Box_{k \in \mathbb{N}}^a K_n $ has 
vertex set $ V_X = \bigoplus_{k \in \mathbb{N}} \mathbb{Z}/n\mathbb{Z} $. By definition, two vertices $ x, y \in V_X $ are connected iff there exists $ l \in \mathbb{N} $ 
such that $ x_k = y_k $ for all $ k \neq l $. The graph $ X = H(\infty, n) $ is an \emph{infinite Hamming graph}. 

Recall from section \ref{secfreewreath} the construction of wreath products for discrete quantum groups. 

\begin{prop} \label{hamming}
The quantum automorphism group $ \Qut_\delta(H(\infty,n)) $ contains the restricted wreath product $ \Sigma_n^+ \wwr \Sym(\mathbb{N}) $ naturally as a quantum subgroup. 
\end{prop}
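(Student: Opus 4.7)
The goal is to exhibit $\Corep(\Sigma_n^+ \wwr \Sym(\mathbb{N}))$ as a full $C^*$-tensor subcategory of $\Corep(\Qut_\delta(H(\infty,n)))$, i.e., to construct a fully faithful tensor functor $\Phi$ between them. An object of the source is a dualisable restricted half-liberated $\mathbb{N}$-free wreath corepresentation of $\Sigma_n^+$, which unpacks as a Hilbert space $\HH$, a classical permutation $\sigma \in \Sym(\mathbb{N})$ giving $\pi^X = Q_\sigma$, and a commuting family $(\pi^G_k)_{k \in \mathbb{N}}$ of finite dimensional representations of $C_0(\Sigma_n^+)$ on $\HH$ with $\pi^G_k = \epsilon$ outside a finite set. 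Writing $v^k = (v^k_{ab})_{a,b \in \mathbb{Z}/n\mathbb{Z}}$ for the quantum permutation of $\{0,\dots,n-1\}$ associated to $\pi^G_k$, the plan is to set
$$
\Phi(\pi)_{(x_i),(y_i)} \;=\; \prod_{k \in \mathbb{N}} v^k_{x_k,\, y_{\sigma^{-1}(k)}},
$$
and have $\Phi$ act as the identity on morphism spaces. For $k$ outside a finite set $F \supset \supp(\sigma) \cup \{k : \pi^G_k \neq \epsilon\}$ the factor reduces to $\delta_{x_k, y_k}\cdot\id$, so the product is either zero or a finite product of commuting projections (the commutativity coming from half-liberation), hence a well-defined projection on $\HH$.

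The first task is to verify that $\Phi(\pi)$ is a quantum automorphism of $H(\infty,n)$. The magic unitary axioms reduce to the corresponding properties of each $v^k$, using that $\sigma$ restricts to a bijection on $F$. For the adjacency relation $uA_{H(\infty,n)} = A_{H(\infty,n)} u$, the cleanest route is via the decomposition $A_{H(\infty,n)} = \sum_{l \in \mathbb{N}} A_{K_n}^{(l)}$, where $A_{K_n}^{(l)}$ applies $A_{K_n}$ to the $l$-th coordinate only: a direct computation shows that both $\sum_{(z)} \Phi(\pi)_{(x),(z)} A_{(z),(y)}$ and $\sum_{(z)} A_{(x),(z)} \Phi(\pi)_{(z),(y)}$ collapse to the same expression
$$
\sum_{l \in \mathbb{N}} \bigl(1 - v^l_{x_l, y_{\sigma^{-1}(l)}}\bigr)\, \prod_{k \neq l} v^k_{x_k, y_{\sigma^{-1}(k)}},
$$
the two sides agreeing after a reindexing $l \mapsto \sigma(l)$. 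This sum is effectively finite since outside $F$ the factors collapse to Kronecker deltas. Monoidality $\Phi(\pi \tp \rho) \cong \Phi(\pi) \tp \Phi(\rho)$ then follows from a routine calculation using the tensor product formulas $(\pi \tp \rho)^G_k = (\pi^G_k \otimes \rho^G_{\sigma_\pi^{-1}(k)})\Delta$ and $Q_{\sigma_\pi} \tp Q_{\sigma_\rho} = Q_{\sigma_\pi \sigma_\rho}$.

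Faithfulness of $\Phi$ is automatic since it acts as the identity on morphism spaces. For fullness, an intertwiner $T$ between $\Phi(\pi)$ and $\Phi(\rho)$ must be recovered as an intertwiner of the wreath corepresentation data. Restricting $\Phi$ to the subcategory where $\sigma = \id$ gives the embedding $\Corep(\coprod_{k} \Sigma_n^+) \hookrightarrow \Corep(\Qut_\delta(H(\infty,n)))$ from Proposition~\ref{infinitecartesianqut}, which is already known to be fully faithful, forcing $T$ to intertwine each $\pi^G_k$ with $\rho^G_k$. Restricting to one-dimensional corepresentations reduces to the classical embedding $\Sym(\mathbb{N}) \hookrightarrow \Aut(H(\infty,n))$ via coordinate permutations, which pins down the permutation part. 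The main obstacle is the adjacency calculation in the second step: the match between the two sums is where all the non-trivial interplay between the base quantum group $\Sigma_n^+$ and the permutation action of $\Sym(\mathbb{N})$ is concentrated, and it requires careful bookkeeping with $\sigma$; the fullness step is secondary but also demands care to extract both pieces of data from a single quantum automorphism.
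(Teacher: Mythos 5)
Your construction is essentially the paper's: the proof of Proposition \ref{hamming} combines the coordinatewise embedding of Proposition \ref{infinitecartesianqut} with the permutation of tensor factors by $\Sym(\mathbb{N})$, which is exactly what your formula $\Phi(\pi)_{(x_i),(y_i)} = \prod_k v^k_{x_k,\,y_{\sigma^{-1}(k)}}$ spells out, and your verifications of the magic unitary property, the commutation with $A_{H(\infty,n)}$ (via the reindexing $l \mapsto \sigma(l)$) and monoidality are correct. One small but genuine slip: in the restricted wreath product $\Sigma_n^+ \wwr \Sym(\mathbb{N})$ only the $\Sigma_n^+$-legs are restricted; the classical permutation $\sigma$ may move infinitely many points, so a finite set $F \supset \supp(\sigma)$ need not exist. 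The construction survives because for fixed vertices $(x),(y)$ of $H(\infty,n)$ all but finitely many factors are $\delta_{x_k, y_{\sigma^{-1}(k)}} = \delta_{0,0} = 1$ (as $x$ and $y$ are finitely supported and $\sigma$ is a bijection), and the relevant sums are likewise effectively finite for each fixed pair of vertices; but your stated justification should be rephrased accordingly.

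The genuine gap is the fullness step, which is not secondary: being a quantum subgroup means precisely that the image is a \emph{full} $C^*$-tensor subcategory. Your argument by ``restricting $\Phi$ to the subcategory where $\sigma = \id$'' and ``restricting to one-dimensional corepresentations'' does not apply to the object at hand: a given intertwiner $T$ between $\Phi(\pi)$ and $\Phi(\rho)$ with nontrivial permutation parts is not a morphism in either of those subcategories, so full faithfulness there says nothing about $T$. What is needed is to recover the wreath data from the image, in the spirit of the map $g$ in the proof of Theorem \ref{disjointunion}: for instance, summing $\Phi(\pi)_{(x),(y)}$ over all $(y)$ with a fixed coordinate $y_m = b$ converges strongly to $v^{\sigma_\pi(m)}_{x_{\sigma_\pi(m)},\,b}$, and similar sums detect the permutation part. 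Applying $T$ to these strongly convergent sums of intertwining relations then shows that $T = 0$ unless $\sigma_\pi = \sigma_\rho$, and that $T$ intertwines each $\pi^G_k$ with $\rho^G_k$, which is the fullness statement. This is exactly the verification the paper subsumes under ``analogous to the proof of Proposition \ref{infinitecartesianqut}''; as written, your shortcut would not establish it.
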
 

\begin{proof} 
Due to Proposition \ref{infinitecartesianqut} we obtain a canonical embedding of $ \coprod_{k \in \mathbb{N}} \Sigma_n^+ $ into $ \Qut_\delta(H(\infty,n)) $. Together with 
the natural action of $ \Sym(\mathbb{N}) $ given by permutation of factors this yields an embedding $ \Sigma_n^+ \wwr \Sym(\mathbb{N}) \rightarrow \Qut_\delta(H(\infty,n)) $ as 
required. Since the verifications are analogous to the arguments in the proof of Proposition \ref{infinitecartesianqut} we shall not spell out the details. 
\end{proof} 

The quantum automorphism groups of finite Hamming graphs $ \Box_{k = 1}^m K_n $ have recently been computed by Gromada \cite{GROMADA_quantumsymmetriescayley}. 
We note that the inclusion morphism in Proposition \ref{hamming} is not an isomorphism, and we shall leave it as an open problem to determine $ \Qut_\delta(H(\infty,n)) $. 
It would also be interesting to understand the structure of the quantum automorphism groups of other weak cartesian products.

\subsection{The Rado graph} 

Let us finally consider the Rado graph, also known as the Erd\H{o}s-R\'enyi graph or random graph, see \cite{ERDOES_RENYI_asymmetricgraphs}, \cite{RADO_universalgraphs}, 
\cite{CAMERON_randomgraphrevisited}. It can be defined as the countable graph $ R $ with vertex set $ V_R $ given by the prime numbers congruent $ 1 \bmod 4 $, 
and with $ (p,q) \in E_R $ iff $ p $ is a quadratic residue mod $ q $. Note here that $ (p,q) \in E_R $ iff $ (q,p) \in E_R $ by the law of quadratic reciprocity. 

The Rado graph admits numerous concrete models, and contains all countable graphs as induced subgraphs. 
A key property of this graph is that for any pair of disjoint finite sets $ A,B \subset V_R $ there exists a vertex $ w \in V_R $ such that $ (x,w) \in E_R $ 
for all $ x \in A $ and $ (y,w) \notin R $ for all $ y \in B $. 
In fact, the Rado graph $ R $ is homogeneous \cite{HENSON_homogeneousgraphs}, which means that any partial automorphism of $ R $ 
can be extended to a global automorphism. 

As a consequence, this graph has a wealth of automorphisms, which can be constructed by the back-and-forth method. 
In contrast, we have the following result regarding quantum automorphisms. 

\begin{prop} \label{ER}
The Rado graph $ R $ admits no finite dimensional quantum automorphisms which are not classical. 
\end{prop}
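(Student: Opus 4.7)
The plan is to exploit finite-dimensionality of $\HH$ together with the defining richness of the Rado graph, namely the extension property: for any disjoint finite sets $A, B \subset V_R$ there exists $w \in V_R$ adjacent to every vertex of $A$ and to none of $B$. I will reduce to showing that an arbitrary finite-dimensional quantum automorphism $(\HH, u)$ has all entries $u_{xy}$ pairwise commuting. After decomposing into irreducibles, this is equivalent to showing the $*$-algebra generated by $u$ in $B(\HH)$ is abelian, or that every irreducible finite-dimensional quantum automorphism is one-dimensional.

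First I would extract two basic finiteness facts. Since $\HH$ is finite-dimensional and $\sum_{y} u_{xy} = 1$ strongly, a trace argument forces the row support $S_x = \{y : u_{xy} \neq 0\}$ to be finite for every $x$, and similarly for columns. Thus for any finite collection of vertices $x_1, \dots, x_m$ (and the columns appearing in their supports), only finitely many vertices of $V_R$ are genuinely involved in the corresponding products of $u$-entries. This already reduces questions of commutativity among the $u_{x_i y_i}$ to questions involving a finite subset $F \subset V_R$, with the remaining vertices of $V_R$ available as a source of ``witnesses.''

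The heart of the argument is then to show, for any two non-vanishing products $u_{x_1 y_1} u_{x_2 y_2}$, that $[u_{x_1 y_1}, u_{x_2 y_2}] = 0$. If $\rel(x_1,x_2) \neq \rel(y_1,y_2)$ the product is already zero by the quantum automorphism relations, so assume the relations match. For any vertex $v \in V_R \setminus F$, inserting the resolution of the identity $1 = \sum_z u_{v z}$ between $u_{x_1 y_1}$ and $u_{x_2 y_2}$, together with the vanishing relation $u_{x_i y_i} u_{v z} = 0$ unless $\rel(x_i, v) = \rel(y_i, z)$, pins down which $z$'s can contribute in terms of $\rel(v,x_i)$. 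By the extension property we may freely prescribe the adjacency pattern of $v$ to $\{x_1, x_2\}$ (and to any finite set of auxiliary vertices we wish to constrain). Iterating this with enough witnesses of carefully chosen adjacency types, combined with the fact that every column of $u$ is also a partition of unity, should force the combinatorial data to identify $u_{x_1 y_1} u_{x_2 y_2}$ with $u_{x_2 y_2} u_{x_1 y_1}$, reducing any noncommutativity to a contradiction with the homogeneity of $R$.

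The main obstacle I expect is closing the final comparison step: the extension property only supplies witnesses inside $V_R$, while commutativity of projections is an operator-algebraic statement in $B(\HH)$. A natural bridge is to argue that the finite-dimensional subalgebra generated by $\{u_{xy} : x, y \in F'\}$, where $F'$ is $F$ together with a sufficient supply of extension witnesses, must agree with the abelianisation $C(\Aut(F' \cap R))$ of the corresponding Bichon--Banica quantum automorphism algebra of a finite induced subgraph. Equivalently, one would need to invoke (or re-derive by the extension property) a ``local rigidity'' statement asserting that sufficiently generic finite induced subgraphs of $R$ have no quantum symmetry and that the global quantum automorphism is determined by its restriction to such subgraphs—this rigidity, rather than the formal manipulation of magic unitary relations, is the substantive content of the proposition.
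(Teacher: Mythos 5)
Your setup (finite row and column supports coming from finite dimensionality, and the extension property of $R$ as a supply of witness vertices) matches the paper's starting point, but there is a genuine gap exactly where you flag it: the ``final comparison step'' is never closed, and the bridge you propose does not work. Restricting the entries $u_{xy}$ to a finite vertex set $F'$ does not produce a magic unitary over $F'$ (the restricted rows and columns no longer sum to $1$), so the algebra these entries generate is not a representation of the Banica--Bichon algebra of the induced subgraph, and there is no ``restriction to finite induced subgraphs'' to which a local rigidity statement could be applied. Moreover, since $R$ contains every finite graph as an induced subgraph, plenty of its finite induced subgraphs do have quantum symmetry, and nothing forces the finitely many vertices relevant to a putative noncommuting pair to sit inside a ``generic'' subgraph. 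Likewise, inserting $1=\sum_z u_{vz}$ between $u_{x_1y_1}$ and $u_{x_2y_2}$ and bookkeeping which $z$ survive does not by itself yield commutativity.

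The missing idea in the paper's proof is different and much more economical. Suppose $u_{x(+)y_1(+)}$ and $u_{x(-)y_1(-)}$ do not commute, list the finitely many remaining nonzero entries $u_{x(\pm)y_j(\pm)}$, $j>1$, of those two rows, and set $p_\alpha(\pm)=u_{x(\pm)y_1(\pm)}$ and $p_\beta(\pm)=\sum_{j>1}u_{x(\pm)y_j(\pm)}=1-p_\alpha(\pm)$. Apply the extension property once to get a single vertex $w$ adjacent to $y_1(+)$ and $y_1(-)$ but to none of the $y_j(\pm)$ with $j>1$. The relation $u_{vw}\,u_{x(\pm)y_j(\pm)}=0$ whenever $\rel(v,x(\pm))\neq\rel(w,y_j(\pm))$ then forces every nonzero entry $u_{vw}$ of the column of $w$ to satisfy $u_{vw}\le p_\alpha(\pm)$ if $v$ is adjacent to $x(\pm)$ and $u_{vw}\le p_\beta(\pm)$ otherwise. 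Since the nonzero $u_{vw}$ form a finite partition of unity, all four projections $p_\alpha(\pm),p_\beta(\pm)$ are sums of members of this one orthogonal family and hence commute, contradicting the choice of the pair. It is this one-witness refinement trick --- the column of $w$ simultaneously refining the two-element row partitions $\{p_\alpha(\pm),p_\beta(\pm)\}$ --- and not a local-to-global rigidity statement about induced subgraphs, that carries the proposition; without it (or a worked-out substitute) your argument does not yet constitute a proof.
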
 

\begin{proof} 
Assume that $ \sigma = (\HH, u) $ is a non-classical finite dimensional quantum automorphism, by which we mean that the algebra generated by the projections $ u_{xy} $ 
for $ x, y \in V_R $ is noncommutative. 

By assumption, we then find vertices $ x(+), x(-) $ and $ y_1(+), y_1(-) $ such $ u_{x(+), y_1(+)} $ and $ u_{x(-), y_1(-)} $ do not commute. 
We let $ u_{x(\pm), y_j(\pm)} $ for $ j = 2, \dots, r_\pm $ be the remaining nonzero projections in the row for $ x(\pm) $. Set 
$$
p_\alpha(\pm) = u_{x(\pm) y_1(\pm)}, \qquad p_\beta(\pm) = \sum_{j > 1} u_{x(\pm)y_j(\pm)}.  
$$
Then the projections $ p_\alpha(+), p_\beta(+) $ are orthogonal with $ p_\alpha(+) + p_\beta(+) = 1 $, 
and in the same way $ p_\alpha(-), p_\beta(-) $ are orthogonal with $ p_\alpha(-) + p_\beta(-) = 1 $. 
By construction, $ p_\alpha(+) $ and $ p_\alpha(-) $ do not commute, and hence $ p_\beta(+) $ and $ p_\beta(-) $ do not commute either. 

Choose a vertex $ w $ such that $ (w, y_1(+)) \in E_R, (w,y_1(-)) \in E_R $ and $ (w, y_i(\pm)) \notin E_R $ for all $ i > 1 $,
and let us consider the nonzero projections $ u_{v_i w} $ in the column for $ w $, where $ 1 \leq i \leq k $ for some $ k $. 
If $ (v_i, x(\pm)) \in E_R $ then $ u_{v_i w} $ is orthogonal to $ p_\beta(\pm) $, 
and if $ (v_i, x(\pm)) \notin E_R $ then $ u_{v_i w} $ is orthogonal to $ p_\alpha(\pm) $. 
We conclude $ u_{v_iw} \leq p_\alpha(\pm) $ in the first case and $ u_{v_i w} \leq p_\beta(\pm) $ in the second case. 
Since the projections $ u_{v_i w} $ form a partition of unity it follows that we can write each of $ p_\alpha(\pm) $ and $ p_\beta(\pm) $ as sums of certain $ u_{v_i w} $. 

However, this means in particular that all these projections commute, which yields a contradiction. 
\end{proof} 

According to Proposition \ref{ER}, any non-classical quantum automorphism of $ R $ is necessarily infinite dimensional. 
It follows from Proposition \ref{ER} and Proposition \ref{disjointautomorphisms} that $ \Aut(R) $ does not contain disjoint automorphisms, and we pose the following question. 

\begin{question} 
Does $ R $ have quantum symmetry?
\end{question} 

We note that the argument in Proposition \ref{ER} works in the same way for the Henson graphs $ G_p $ for $ p \geq 4 $, 
that is, the universal $ K_p $-free graphs. In other words, none of these graphs admits finite dimensional quantum automorphisms. 
This follows from Lemma 2.1 in \cite{HENSON_homogeneousgraphs}.

\bibliographystyle{hacm}

\bibliography{cvoigt}

\def\cprime{$'$} \def\cprime{$'$} \def\cprime{$'$} \def\cprime{$'$}
  \def\cprime{$'$} \def\cprime{$'$}
  \def\polhk#1{\setbox0=\hbox{#1}{\ooalign{\hidewidth
  \lower1.5ex\hbox{`}\hidewidth\crcr\unhbox0}}} \def\cprime{$'$}
  \def\cprime{$'$} \def\cprime{$'$} \def\Dbar{\leavevmode\lower.6ex\hbox to
  0pt{\hskip-.23ex \accent"16\hss}D}
  \def\cftil#1{\ifmmode\setbox7\hbox{$\accent"5E#1$}\else
  \setbox7\hbox{\accent"5E#1}\penalty 10000\relax\fi\raise 1\ht7
  \hbox{\lower1.15ex\hbox to 1\wd7{\hss\accent"7E\hss}}\penalty 10000
  \hskip-1\wd7\penalty 10000\box7}
  \def\cfudot#1{\ifmmode\setbox7\hbox{$\accent"5E#1$}\else
  \setbox7\hbox{\accent"5E#1}\penalty 10000\relax\fi\raise 1\ht7
  \hbox{\raise.1ex\hbox to 1\wd7{\hss.\hss}}\penalty 10000 \hskip-1\wd7\penalty
  10000\box7}
\begin{thebibliography}{10}

\bibitem{BANICA_quantumpermutationgroups}
{\sc Banica, T.}
\newblock Quantum permutation groups.
\newblock arXiv 2012.10975.

\bibitem{Banicageneric}
{\sc Banica, T.}
\newblock Symmetries of a generic coaction.
\newblock {\em Math. Ann. 314}, 4 (1999), 763--780.

\bibitem{Banicafusscatalan}
{\sc Banica, T.}
\newblock Quantum groups and {F}uss-{C}atalan algebras.
\newblock {\em Comm. Math. Phys. 226}, 1 (2002), 221--232.

\bibitem{Banicaqutgraph}
{\sc Banica, T.}
\newblock Quantum automorphism groups of homogeneous graphs.
\newblock {\em J. Funct. Anal. 224}, 2 (2005), 243--280.

\bibitem{Banicasmallmetric}
{\sc Banica, T.}
\newblock Quantum automorphism groups of small metric spaces.
\newblock {\em Pacific J. Math. 219}, 1 (2005), 27--51.

\bibitem{BANICA_BICHON_ordereleven}
{\sc Banica, T., and Bichon, J.}
\newblock Quantum automorphism groups of vertex-transitive graphs of order
  {$\leq11$}.
\newblock {\em J. Algebraic Combin. 26}, 1 (2007), 83--105.

\bibitem{BBfourpoints}
{\sc Banica, T., and Bichon, J.}
\newblock Quantum groups acting on 4 points.
\newblock {\em J. Reine Angew. Math. 626\/} (2009), 75--114.

\bibitem{BBCsurvey}
{\sc Banica, T., Bichon, J., and Collins, B.}
\newblock Quantum permutation groups: a survey.
\newblock In {\em Noncommutative harmonic analysis with applications to
  probability}, vol.~78 of {\em Banach Center Publ.} Polish Acad. Sci. Inst.
  Math., Warsaw, 2007, pp.~13--34.

\bibitem{BBpauli}
{\sc Banica, T., and Collins, B.}
\newblock Integration over the {P}auli quantum group.
\newblock {\em J. Geom. Phys. 58}, 8 (2008), 942--961.

\bibitem{BECKMAN_QUARLES_isometries}
{\sc Beckman, F.~S., and Quarles, Jr., D.~A.}
\newblock On isometries of {E}uclidean spaces.
\newblock {\em Proc. Amer. Math. Soc. 4\/} (1953), 810--815.

\bibitem{BMTcqgcoamenability}
{\sc B{\'e}dos, E., Murphy, G.~J., and Tuset, L.}
\newblock Co-amenability of compact quantum groups.
\newblock {\em J. Geom. Phys. 40}, 2 (2001), 130--153.

\bibitem{BICHON_qutgraphs}
{\sc Bichon, J.}
\newblock Quantum automorphism groups of finite graphs.
\newblock {\em Proc. Amer. Math. Soc. 131}, 3 (2003), 665--673.

\bibitem{Bichonfreewreathproducts}
{\sc Bichon, J.}
\newblock Free wreath product by the quantum permutation group.
\newblock {\em Algebr. Represent. Theory 7}, 4 (2004), 343--362.

\bibitem{Brannanquantumautomorphism}
{\sc Brannan, M.}
\newblock Reduced operator algebras of trace-perserving quantum automorphism
  groups.
\newblock {\em Doc. Math. 18\/} (2013), 1349--1402.

\bibitem{BRANNAN_CHIRVASITU_FRESLON_matrixmodels}
{\sc Brannan, M., Chirvasitu, A., and Freslon, A.}
\newblock Topological generation and matrix models for quantum reflection
  groups.
\newblock {\em Adv. Math. 363\/} (2020), 106982, 31.

\bibitem{CAMERON_randomgraphrevisited}
{\sc Cameron, P.~J.}
\newblock The random graph revisited.
\newblock In {\em European {C}ongress of {M}athematics, {V}ol. {I}
  ({B}arcelona, 2000)}, vol.~201 of {\em Progr. Math.} Birkh\"{a}user, Basel,
  2001, pp.~267--274.

\bibitem{DFSWhaagerup}
{\sc Daws, M., Fima, P., Skalski, A., and White, S.}
\newblock The {H}aagerup property for locally compact quantum groups.
\newblock {\em J. Reine Angew. Math. 711\/} (2016), 189--229.

\bibitem{ERDOES_RENYI_asymmetricgraphs}
{\sc Erd\H{o}s, P., and R\'{e}nyi, A.}
\newblock Asymmetric graphs.
\newblock {\em Acta Math. Acad. Sci. Hungar. 14\/} (1963), 295--315.

\bibitem{FIMA_propertyt}
{\sc Fima, P.}
\newblock Kazhdan's property {$T$} for discrete quantum groups.
\newblock {\em Internat. J. Math. 21}, 1 (2010), 47--65.

\bibitem{Freslonpermanence}
{\sc Freslon, A.}
\newblock Permanence of approximation properties for discrete quantum groups.
\newblock {\em Ann. Inst. Fourier (Grenoble) 65}, 4 (2015), 1437--1467.

\bibitem{GOSWAMI_existenceexamples}
{\sc Goswami, D.}
\newblock Existence and examples of quantum isometry groups for a class of
  compact metric spaces.
\newblock {\em Adv. Math. 280\/} (2015), 340--359.

\bibitem{GOSWAMI_SKALSKI_quantumpermutations}
{\sc Goswami, D., and Skalski, A.}
\newblock On two possible constructions of the quantum semigroup of all quantum
  permutations of an infinite countable set.
\newblock In {\em Operator algebras and quantum groups}, vol.~98 of {\em Banach
  Center Publ.} Polish Acad. Sci. Inst. Math., Warsaw, 2012, pp.~199--214.

\bibitem{GROMADA_quantumsymmetriescayley}
{\sc Gromada, D.}
\newblock Quantum symmetries of {C}ayley graphs of abelian groups.
\newblock arXiv:2106.08787.

\bibitem{HMPSsynchronousgames}
{\sc Helton, J.~W., Meyer, K.~P., Paulsen, V.~I., and Satriano, M.}
\newblock Algebras, synchronous games, and chromatic numbers of graphs.
\newblock {\em New York J. Math. 25\/} (2019), 328--361.

\bibitem{HENSON_homogeneousgraphs}
{\sc Henson, C.~W.}
\newblock A family of countable homogeneous graphs.
\newblock {\em Pacific J. Math. 38\/} (1971), 69--83.

\bibitem{KOESTLER_SPEICHER_definetti}
{\sc K\"{o}stler, C., and Speicher, R.}
\newblock A noncommutative de {F}inetti theorem: invariance under quantum
  permutations is equivalent to freeness with amalgamation.
\newblock {\em Comm. Math. Phys. 291}, 2 (2009), 473--490.

\bibitem{KUSTERMANS_kmsweights}
{\sc Kustermans, J.}
\newblock {KMS}-weights on ${C}^*$-algebras.
\newblock arXiv:funct-an/9704008.

\bibitem{KVLCQG}
{\sc Kustermans, J., and Vaes, S.}
\newblock Locally compact quantum groups.
\newblock {\em Ann. Sci. \'Ecole Norm. Sup. (4) 33}, 6 (2000), 837--934.

\bibitem{KYED_betticoamenable}
{\sc Kyed, D.}
\newblock {$L^2$}-{B}etti numbers of coamenable quantum groups.
\newblock {\em M\"{u}nster J. Math. 1\/} (2008), 143--179.

\bibitem{LUPINI_MANCINSKA_ROBERSON_nonlocalgames}
{\sc Lupini, M., Man\v{c}inska, L., and Roberson, D.~E.}
\newblock Nonlocal games and quantum permutation groups.
\newblock {\em J. Funct. Anal. 279}, 5 (2020), 108592, 44.

\bibitem{MAEHARA_ROEDL_unitdistance}
{\sc Maehara, H., and R\"{o}dl, V.}
\newblock On the dimension to represent a graph by a unit distance graph.
\newblock {\em Graphs Combin. 6}, 4 (1990), 365--367.

\bibitem{MANCINSKA_ROBERSON_quantumisoplanar}
{\sc Man\v{c}inska, L., and Roberson, D.}
\newblock Quantum isomorphism is equivalent to equality of homomorphism counts
  from planar graphs.
\newblock In {\em 2020 {IEEE} 61st {A}nnual {S}ymposium on {F}oundations of
  {C}omputer {S}cience}. IEEE Computer Soc., Los Alamitos, CA, [2020]
  \copyright 2020, pp.~661--672.

\bibitem{MARKER_modeltheory}
{\sc Marker, D.}
\newblock {\em Model theory}, vol.~217 of {\em Graduate Texts in Mathematics}.
\newblock Springer-Verlag, New York, 2002.
\newblock An introduction.

\bibitem{NTlecturenotes}
{\sc Neshveyev, S., and Tuset, L.}
\newblock {\em Compact quantum groups and their representation categories},
  vol.~20 of {\em Cours Sp\'{e}cialis\'{e}s [Specialized Courses]}.
\newblock Soci\'{e}t\'{e} Math\'{e}matique de France, Paris, 2013.

\bibitem{PANKOV_infinitejohnson}
{\sc Pankov, M.}
\newblock Automorphisms of infinite {J}ohnson graphs.
\newblock {\em Discrete Math. 313}, 5 (2013), 721--725.

\bibitem{RADO_universalgraphs}
{\sc Rado, R.}
\newblock Universal graphs and universal functions.
\newblock {\em Acta Arith. 9\/} (1964), 331--340.

\bibitem{ROLLIER_VAES_qut}
{\sc Rollier, L., and Vaes, S.}
\newblock Quantum automorphism groups of connected locally finite graphs and
  quantizations of discrete groups.
\newblock {\em arXiv:2209.03770\/} (2022).

\bibitem{SABIDUSSI_graphmultiplication}
{\sc Sabidussi, G.}
\newblock Graph multiplication.
\newblock {\em Math. Z. 72\/} (1959/60), 446--457.

\bibitem{SCHMIDT_distancetransitive}
{\sc Schmidt, S.}
\newblock On the quantum symmetry of distance-transitive graphs.
\newblock {\em Adv. Math. 368\/} (2020), 107150, 50.

\bibitem{SCHMIDT_foldedcube}
{\sc Schmidt, S.}
\newblock Quantum automorphisms of folded cube graphs.
\newblock {\em Ann. Inst. Fourier (Grenoble) 70}, 3 (2020), 949--970.

\bibitem{SOLTAN_bohr}
{\sc So{\l}tan, P.~M.}
\newblock Quantum {B}ohr compactification.
\newblock {\em Illinois J. Math. 49}, 4 (2005), 1245--1270.

\bibitem{SOLTAN_compactifications}
{\sc So{\l}tan, P.~M.}
\newblock Compactifications of discrete quantum groups.
\newblock {\em Algebr. Represent. Theory 9}, 6 (2006), 581--591.

\bibitem{Tomatsuamenablediscrete}
{\sc Tomatsu, R.}
\newblock Amenable discrete quantum groups.
\newblock {\em J. Math. Soc. Japan 58}, 4 (2006), 949--964.

\bibitem{vDmult}
{\sc Van~Daele, A.}
\newblock Multiplier {H}opf algebras.
\newblock {\em Trans. Amer. Math. Soc. 342}, 2 (1994), 917--932.

\bibitem{Wangqsymmetry}
{\sc Wang, S.}
\newblock Quantum symmetry groups of finite spaces.
\newblock {\em Comm. Math. Phys. 195}, 1 (1998), 195--211.

\bibitem{Woronowiczsuqn}
{\sc Woronowicz, S.~L.}
\newblock Tannaka-{K}re\u\i n duality for compact matrix pseudogroups.
  {T}wisted {${\rm SU}(N)$} groups.
\newblock {\em Invent. Math. 93}, 1 (1988), 35--76.

\end{thebibliography}

\end{document}